\documentclass[11pt,a4paper]{amsart}
\usepackage[colorlinks=true,linkcolor=blue,citecolor=red,urlcolor=blue]{hyperref}
\usepackage[left=1.15in,right=1.15in,bottom=1.1in,top=1.1in]{geometry}
\usepackage{tikz,amsthm,amsmath,amstext,amssymb,amscd,epsfig,euscript, mathrsfs, dsfont,pspicture,multicol, mathtools,graphpap,graphics,graphicx,times,enumerate,subfig,sidecap,wrapfig,color}
\usepackage{url}
\makeatletter
\def\@tocline#1#2#3#4#5#6#7{\relax
  \ifnum #1>\c@tocdepth % then omit
  \else
    \par \addpenalty\@secpenalty\addvspace{#2}%
    \begingroup \hyphenpenalty\@M
    \@ifempty{#4}{%
      \@tempdima\csname r@tocindent\number#1\endcsname\relax
    }{%
      \@tempdima#4\relax
    }%
    \parindent\z@ \leftskip#3\relax \advance\leftskip\@tempdima\relax
    \rightskip\@pnumwidth plus4em \parfillskip-\@pnumwidth
    #5\leavevmode\hskip-\@tempdima
      \ifcase #1
       \or\or \hskip 1em \or \hskip 2em \else \hskip 3em \fi%
      #6\nobreak\relax
    \dotfill\hbox to\@pnumwidth{\@tocpagenum{#7}}\par
    \nobreak
    \endgroup
  \fi}
\makeatother

%\usepackage[hyperpageref]{backref}
%%%%%%%%%%

\DeclareGraphicsExtensions{.png}
% \numberwithin{equation}{section}
%%%%%%%%%%%Les letres grecques%%%%%%%%%%%%5
%\def\ll{\lambda^{\prime}}
%\def\H{\mathcal{H}}$

%%%%%%%%%%%%%%%%%%%les ensembles%%%%%%%%%%%%%%%%

\newcommand{\bela}{\begin{equation} \label}
\newcommand{\eeq}{\end{equation}}
\newcommand{\ba}{\begin{array}}
\newcommand{\ea}{\end{array}}

\newtheorem{theorem}{Theorem}[section]
\newtheorem{proposition}{Proposition}[section]
\newtheorem{lemma}{Lemma}[section]
\newtheorem{corollary}{Corollary}[section]
\theoremstyle{remark}
\newtheorem{remrak}{Remark}[section]

\catcode`\@=12
 
\begin{document}
\title[Magnetic Robin Laplacian  ]{Magnetic perturbations of the Robin Laplacian in the strong coupling limit}
%\author[]{Rayan Fahs}
\author[]{Rayan Fahs}
\address{LAREMA,  Faculté  des  Sciences,  2  Boulevard  Lavoisier,  Université  d’Angers,  49045  Angers,  France.}
\email{rayan.fahs@univ-angers.fr}
%{Rayan Fahs, Univ Angers, CNRS, LAREMA, SFR MATHSTIC, F-49000 Angers, France}
%\date{February 4, 2022}
\subjclass[2010]{35P15, 47A10, 47F05}
\keywords{Magnetic Robin Laplacian, semi classical analysis, eigenvalues, Born-Oppenheimer approximation.}
\maketitle
\begin{abstract}
This paper is devoted to the asymptotic analysis of the eigenvalues of the Laplace operator with a strong magnetic field and Robin boundary condition on a smooth planar domain and with a negative boundary parameter. We study the singular limit when the Robin parameter tends to infinity which is equivalent to a semi-classical limit involving a small positive semi-classical parameter. The main result is a comparison between the spectrum of the Robin Laplacian with an effective operator defined on the boundary of the domain via the Born-Oppenheimer approximation. More precisely, the low-lying eigenvalue of the Robin Laplacian is approximated by the those of the effective operator. When the curvature has a unique non-degenerate maximum, we estimate the spectral gap and find that the magnetic field does not contribute to the three-term expansion of the eigenvalues. In the case of the disc domains, the eigenvalue asymptotics displays the contribution  of the magnetic field explicitly.
\end{abstract}
%\tableofcontents
\section{Introduction}
\subsection{Magnetic Robin Laplacian  }
Let $ \Omega$ be a bounded open subset of $ \mathbb{R}^{2}.\,$ It is assumed that the boundary $\Gamma=\partial\Omega$  is smooth $\mathit{C}^{\infty}$. In this paper, we study the eigenvalues of the magnetic Robin Laplacian in \textit{$L^{2}( \Omega )$} with a large negative parameter, in a continuation of the works initiated in \cite{7, 18, 2}. The  operator is
\begin{align*}
\textit{$ \mathcal{P}^{b}_{\gamma } $ }&=  -( \nabla -ib\textbf{$A_{0}$})^{2}\,,
\end{align*}
with domain $$\mathrm{Dom}(\textit{$\mathcal{P}^{b}_{\gamma}$}) =  \lbrace u \in  \textit{$H^{2}(\Omega)$} : \nu\cdot(\nabla -ib\textbf {$ A_{0}$})u +\gamma \,u =0 \ \text{\,\,on}\ \Gamma  \rbrace \,, $$ where
\begin{enumerate}
  
    \item[$-$]  $\textbf {$ A_{0}$}\,$ is the vector potential,
    \item[$-$]$ \nu$ is the unit outward normal vector of $\Gamma$,
    \item[$-$]$ \gamma<0\,$ is the Robin parameter,
    \item[$-$] $b>0$ is the intensity of the applied magnetic field,
     \item[$-$]$\,\nu\cdot(\nabla -ib\textbf {$ A_{0}$})u +\gamma \,u =0 \ \text{ on}\  \Gamma\, $ is the Robin boundary condition.
    \end{enumerate} 
The magnetic potential $ \textbf {$ A_{0}$}\,$ generates a constant magnetic field $B$ equal to 1 and it is defined by
$$ \textbf {$ A_{0}$}(x_{1},x_{2}) =( \textbf {$ A_{0_{1}}$}, \textbf {$ A_{0_{2}}$}):= \dfrac{1}{2}(-x_{2},x_{1})\,.$$ 
  We have: \begin{align*}
    B\,&:=\nabla\times\textbf {$ A_{0}$}=\partial_{x_{1}}\textbf {$ A_{0_{2}}$}-\partial_{x_{2}}\textbf {$ A_{0_{1}}$}=1\,.
    \end{align*}
    
To be more precise, the magnetic Robin Laplacian $\, \textit{$\mathcal{P}^{b}_{\gamma}$}\, $ is defined via the Lax-Milgram theorem, from the closed semi-bounded quadratic form  \cite[Ch. 4]{5}
$$ \textit{$H^{1}(\Omega)$} \ni u \mapsto \textit{$\mathcal{Q}^{b}_{\gamma } $ }(u) := \displaystyle{\int_{ \Omega }}  |(\nabla -ib\textbf {$ A_{0}$})u(x)|^{2}\mathrm{~d} x + \gamma  \displaystyle{\int_{ \Gamma}} |u(x)|^{2} \mathrm{~d} s(x)\, , $$
where $\mathrm{~d}s\,$ is the standard surface measure on the boundary.
Note that, by a classical trace theorem (see for instance  \cite{16}  in the case of straight boundary), the trace of $\, u \,$ is well defined as an element of \textit{$H^{\frac{1}{2}}(\Gamma)$}\,, and the quadratic form $\textit{$\mathcal{Q}^{b}_{\gamma } $ }$ is   well defined and bounded from below.\\

Since $ \Omega $ is bounded and regular, then  the embedding of $ \textit{$H^{1}(\Omega)$}$ into $ \textit{$L^{2}(\Omega)$}$ is compact, therefore the operator  $ \textit{ $  \mathcal{P}^{b}_{\gamma } $ }$ has a compact resolvent. Its spectrum is purely discrete, and since  $ \textit{ $  \mathcal{P}^{b}_{\gamma } $ }$ is self-adjoint and bounded  from below, it consists of an increasing sequence of eigenvalues. Our aim is to examine the magnetic effects on the principal eigenvalues $\big( \lambda_{n}(b,\gamma)\big)_{n\in \mathbb{N}^{*}}\,,$ when the Robin parameter $\gamma$ tends to $-\infty\,$ and $b$ tends to $+\infty\,$ simultaneously.\\

If $\Omega$ is simply connected, the eigenvalue $\lambda_{n}(b,\gamma)$ is independent of the choice of the vector potential $\textbf {$ A_{0}$}$ of the magnetic field. This is a consequence of invariance under gauge transformations; if $\text{A} \in  \textit{$H^{1}(\Omega; \mathbb{R}^{2})$}$ and $\text{curl} A=1$, then $\text{A}=\textbf {$ A_{0}$}+\nabla\phi$  for a function $\phi \in \textit{$H^{2}(\Omega)$}$ (cf. \cite[Props D.1.1 and D.2.1]{3}), and in turn
$$e^{ib\phi}( \nabla -ib\textbf{$A_{0}$})^{2}e^{-ib\phi}=( \nabla -ib\textbf{$A_{0}$}-ib\nabla\phi)^{2}=( \nabla -ib\textbf{$A$})^{2}. $$

 A part from its own interest, the study of the spectrum of the operator $\mathcal P^b_\gamma$ arises in several contexts,  where both situations subject to  magnetic fields,  $b>0$, or without magnetic fields, $b=0$, occur.
Estimating  the ground  state energy of $\mathcal P_\gamma^b$, leads to information on the critical temperature/critical fields of certain superconductors surrounded by other materials \cite{2, 21}. On the other hand, eigenvlaue asymptotics in the singular limit $\gamma\to-\infty$, provide counter examples in the context of spectral geometry \cite{22, 12}, and has connections to the study of Steklov eigenvalues \cite{23}.
The case $\gamma=0$ corresponds to the Neumann magnetic Laplacian, wich has been studied in many papers \cite{31,6}.

\subsection{Main results.}\label{s1}
In the case without magnetic field, $ b=0 $, the asymptotic expansion of the eigenvalues of the Robin Laplacian has been the subject of recent studies \cite{9, 7, 11, 12, 14, 18,  19}. In particular, the derivation  of an effective operator on the boundary, involving the Laplace-Beltrami operator as well as the mean curvature of the boundary. In the same spirit, we will study the negative spectrum of the Robin's Laplacian on a bounded domain with Robin condition at the boundary. Our aim is to improve/complement earlier estimates and also clarify the magnetic field's  contribution in the spectral asymptotic.\\

 The main contribution of this article is Theorem~\ref{Q2} below, which involves an effective operator that we introduce as follows. We denote by $|\Omega|$  the  area of the domain $\Omega$  and by $|\partial\Omega|$ the arc-length of its boundary. Let us parameterize the boundary, $\partial\Omega$,  by the arc-length, which we denote  by $s$. Let $\kappa(s)$ be the  curvature of $\partial\Omega\,$ at the point defined by the arc-length $s$, with the convention that $\kappa(s)\geq 0$ when $\partial\Omega$ is  convex in a neighborhood of $s$, and negative otherwise. The effective operators are self-adjoint operators in the Hilbert space $L^2(\partial\Omega)$,  acting on periodic functions, and defined as follows
\begin{equation}\label{eq:def-efop}
\begin{aligned}
&\mathcal{L}_{\gamma,\beta_{0}}^{\mathrm{eff}}=-\gamma^{-2}(\partial_{s}-ib\beta_{0})^{2}-1+ \gamma^{-1}\kappa(s) \\
&{\rm Dom}\big( \mathcal{L}_{\gamma,\beta_{0}}^{\mathrm{eff}}\big)=H^{2}(\, \mathbb{R}/ |\partial\Omega|\mathbb{Z}\,)
\end{aligned}
\end{equation} 
 with $\beta_0$ the circulation of the magnetic field,  
\begin{equation}\label{2é}
    \beta_{0}=\dfrac{|\Omega|}{|\partial\Omega|}\,.
\end{equation}
{By the min-max principle, and standard semi-classical approximation,
\begin{equation}\label{eq:eff-op-asymp}
 \lambda_n(\mathcal{L}_{\gamma,\beta_{0}}^{\mathrm{eff}})=-1+\kappa_{\max}\gamma^{-1}+o(\gamma^{-1}) \,, \end{equation}  }
 where
 \begin{equation}\label{NR}
    \kappa_{\max}=\max_{\partial\Omega}\kappa(s)\,. 
 \end{equation}
%\[\clr\begin{aligned}
%&\mathcal{L}_{\gamma}^{\mathrm{eff},\pm}=-\gamma^{-2}(1\mp c_{\pm}\,\gamma^{-1})(\partial_{s}-ib\beta_{0})^{2}-1+ \gamma^{-1}\kappa(s) -\frac{\kappa(s)^{2}}{2}\gamma^{-2}\\
%&{\rm Dom}\big( \mathcal{L}_{\gamma}^{\mathrm{eff},\pm}\big)=H^{2}(\, \mathbb{R}/ |\partial\Omega|\mathbb{Z}\,)
%\end{aligned}\,,\]
%
%{\clr with  $c_-,c_+$ are are  positive constants  depends on $\Omega$, and} 
 %
We will describe the asymptotics of the negative eigenvalues of  $ \textit{ $  \mathcal{P}^{b}_{\gamma } $ }$, via those of the  effective operator on the boundary. More precisely, the $n$-th eigenvalues for the Laplace operator  $\mathcal P_\gamma^b$, is approximated  by the  $n$-th eigenvalue of the operators $\mathcal{L}_{\gamma,\beta_{0}}^{\mathrm{eff}}$. We will use the following notation:
\begin{equation}\label{eq:t-O}
M=\tilde{\mathcal O} (|\gamma|^\beta)\quad{\rm  iff}\quad \forall\,\epsilon>0, M=\mathcal O(|\gamma|^{\beta-\epsilon}).
\end{equation}
\begin{theorem}
\label{Q2}
 Let $0 \leq \eta <\frac32 $ and  $\, 0< c_{1}<c_{2}$\,. Suppose that 
 \[c_{1}|\gamma|^{\eta} \leq b \leq c_{2}|\gamma|^{\eta}\,. \]
 As  $\gamma$ tends to $-\infty\,,$ for any $n \in \mathbb{N}^{*}\,,$ we have
$$ |\lambda_{n}(b,\gamma)-\gamma^{2} \lambda_{n}(\,\mathcal{L}_{\gamma,\beta_{0}}^{\mathrm{eff}})|
=\tilde {\mathcal O}(\gamma^{2(\eta-1)})\,,$$ 
where $\,\lambda_{n}(\mathcal{L}_{\gamma,\beta_{0}}^{\mathrm{eff}})$ is the $n$-eigenvalues of 
the  effective operator introduced in \eqref{eq:def-efop}.
\end{theorem}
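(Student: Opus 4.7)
The plan is to carry out the Born-Oppenheimer reduction in boundary-adapted coordinates and apply the min-max principle twice: once for an upper bound via explicit quasi-modes, once for a matching lower bound via a Feshbach-type projection onto the normal ground state.

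First I would pass to tubular coordinates $(s,t)\in \mathbb{R}/|\partial\Omega|\mathbb{Z}\times[0,t_0)$, with $s$ the arc-length on $\Gamma$ and $t$ the inward normal distance; the induced metric has Jacobian $a(s,t)=1-t\kappa(s)$. A gauge transformation can be chosen so that the tangential component of the transformed potential reads $\tilde A_s(s,t)=-\beta_0+\mathcal O(t)$, the constant $-\beta_0$ reflecting the circulation along $\Gamma$ and being the source of $\beta_0=|\Omega|/|\partial\Omega|$ in $\mathcal{L}^{\mathrm{eff}}_{\gamma,\beta_0}$. In these coordinates $\mathcal{P}^b_\gamma$ takes the Schrödinger form $-a^{-1}\partial_t(a\,\partial_t\cdot)+a^{-1}(\partial_s-ib\tilde A_s)(a^{-1}(\partial_s-ib\tilde A_s)\cdot)$ with Robin condition $\partial_t u=\gamma u$ at $t=0$. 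Next I would prove Agmon-type estimates showing that any eigenfunction whose eigenvalue lies below, say, $-\gamma^2/2$ decays like $e^{-(1-\varepsilon)|\gamma|t}$ in the normal variable; this concentrates everything inside a tube of width $|\gamma|^{-1+\delta}$ and legitimises working in these coordinates globally, up to exponentially small errors.

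For the upper bound I would take $n$ quasi-modes $\psi_j(s,t)=\chi(t)\,\varphi_\gamma(t)\,u_j(s)$, where $u_j$ is the $j$-th eigenfunction of $\mathcal{L}^{\mathrm{eff}}_{\gamma,\beta_0}$, $\varphi_\gamma(t)=\sqrt{2|\gamma|}\,e^{\gamma t}$ is the normalized ground state of $-\partial_t^2$ on $\mathbb{R}_+$ with the Robin condition (eigenvalue $-\gamma^2$), and $\chi$ is a cutoff at scale $|\gamma|^{-1+\delta}$. Substituted into the quadratic form, the leading contribution reproduces $\gamma^2\lambda_j(\mathcal L^{\mathrm{eff}}_{\gamma,\beta_0})$, and the errors come from expanding $a^{-1}=1+t\kappa+\mathcal O(t^2)$ and $\tilde A_s=-\beta_0+\mathcal O(t)$. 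Using $\int t^{2k}|\varphi_\gamma|^2\,dt=\mathcal O(|\gamma|^{-2k})$ together with the hypothesis $b\leq c_2|\gamma|^\eta$, the dominant remainder is the magnetic cross-term $b^2\int t^2|\varphi_\gamma|^2|u_j|^2\,ds\,dt\sim b^2/\gamma^2=\tilde{\mathcal O}(|\gamma|^{2(\eta-1)})$, which by min-max gives the announced upper bound.

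The lower bound is the delicate step. I would apply a Feshbach--Grushin scheme relative to the one-dimensional subspace spanned by $\varphi_\gamma(t)$ in the normal variable, splitting $u(s,t)=f(s)\varphi_\gamma(t)+u^\perp(s,t)$ with $\langle \varphi_\gamma,u^\perp\rangle_{L^2_t}=0$. On the first component the quadratic form reduces to $\gamma^2\langle \mathcal L^{\mathrm{eff}}_{\gamma,\beta_0}f,f\rangle$ plus the same cross-terms as in the upper bound, while on $u^\perp$ the normal Robin operator has its spectrum uniformly bounded below by a positive constant, so this block contributes a term of order $\gamma^2\|u^\perp\|^2$ which can absorb the couplings to $f$ via a standard quadratic-form Cauchy--Schwarz trick. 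Combined with the Agmon localization of step one, this yields the matching lower bound $\lambda_n(b,\gamma)\geq\gamma^2\lambda_n(\mathcal L^{\mathrm{eff}}_{\gamma,\beta_0})-\tilde{\mathcal O}(|\gamma|^{2(\eta-1)})$. The main obstacle is the sharpness of this reduction: the $t$-dependence of $\tilde A_s$ produces magnetic cross-terms $b^2 t^2$ whose weight against $|\varphi_\gamma|^2$ is exactly of order $b^2/\gamma^2$, and this is what fixes the error in the theorem; the restriction $\eta<3/2$ is precisely what keeps $b^2/\gamma^2$ small compared to the spectral gap $|\gamma|^{-1}$ predicted by \eqref{eq:eff-op-asymp}, so that the min-max comparison propagates to each individual eigenvalue $\lambda_n$, while the $\tilde{\mathcal O}$ absorbs the $|\gamma|^\delta$ losses from the Agmon cutoff scale.
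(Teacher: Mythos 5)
Your overall architecture (boundary coordinates with a gauge making the tangential potential $\beta_0+\mathcal O(t)$, Agmon localization in the normal variable, quasi-modes for the upper bound, a Feshbach-type projection for the lower bound) coincides with the paper's, but two concrete points in your plan fail at the accuracy the theorem claims. The first is the choice of transverse profile. You project onto the flat half-line ground state $\varphi_\gamma(t)=\sqrt{2|\gamma|}\,e^{\gamma t}$, independent of $s$. After passing to boundary coordinates, however, the transverse form at frozen $s$ is the \emph{weighted} form $\int_0^\delta|\partial_t u|^2(1-t\kappa(s))\,dt+\gamma|u(0)|^2$, whose ground state is not $\varphi_\gamma$ and whose ground energy is $-\gamma^2+\kappa(s)\gamma-\tfrac12\kappa(s)^2+o(1)$. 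With the splitting $u=f(s)\varphi_\gamma(t)+u^\perp$ the transverse block therefore does \emph{not} decouple: the weight $1-t\kappa(s)$ produces off-diagonal terms between $\varphi_\gamma$ and $u^\perp$ which your Cauchy--Schwarz absorption (aimed only at the tangential and magnetic couplings) never addresses, and whose absorption into the $O(\gamma^2)$ transverse gap costs an $O(1)$ error in $\lambda_n(b,\gamma)$ --- larger than $\tilde{\mathcal O}(\gamma^{2(\eta-1)})$ as soon as $\eta<1$. The paper instead projects onto the $s$-dependent ground state $v_{\kappa(s),h}$ of the weighted transverse operator, for which the transverse form splits exactly into the two blocks; the price is the commutator $[\Pi_s,\partial_s]$ and the Born--Oppenheimer correction $\|\partial_s v_{\kappa(s),h}\|^2$, which it estimates separately.

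The second gap is in the error bookkeeping and the endgame. The magnetic cross term \emph{linear} in $b$ is of order $b\,|\gamma|^{-1}\|(\partial_s-ib\beta_0)f\|\,\|f\|\sim|\gamma|^{\eta-3/4}\|f\|^2$ on the relevant spectral subspace, and this dominates the $b^2/\gamma^2$ term you single out as the dominant remainder for every $\eta<\tfrac54$; it can only be tamed by a weighted Cauchy--Schwarz that perturbs the tangential kinetic coefficient by a factor $1\pm c|\gamma|^{-r}$. Because of this and of the $-\tfrac12\kappa(s)^2$ correction above, the reduction does not land on $\gamma^2\mathcal L^{\mathrm{eff}}_{\gamma,\beta_0}$ but on a dressed effective operator carrying an extra bounded potential and a modified kinetic coefficient (the paper's $\mathcal L^{\mathrm{eff},\pm}_{h,\beta_0}$). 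Showing that these extra terms shift $\lambda_n$ only by an admissible amount is a separate and substantial step --- the paper's Section 5, which localizes the effective eigenfunctions at the curvature maximum and uses the harmonic approximation to control the size of $\lambda_n(\mathcal L^{\mathrm{eff},b}_{h,4})$ --- and it is entirely absent from your plan. Finally, your heuristic for the threshold is not the right accounting: $\eta<\tfrac32$ is not the condition that $b^2/\gamma^2$ be small compared with the effective spectral gap $|\gamma|^{-1}$ (that would force $\eta<\tfrac12$); it is the condition under which the magnetic coupling to the orthogonal block, of relative size $|\gamma|^{2\eta-3+2\alpha}$ with respect to $\gamma^2$, can be absorbed by the order-one (in rescaled units) transverse spectral gap in the lower bound.
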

{
It results from Theorem~\ref{Q2} and \eqref{eq:eff-op-asymp},
\begin{equation}\label{eq:2terms-asym}
\lambda_{n}(b,\gamma)=-\gamma^{2}+\kappa_{\max}\gamma+\tilde{\mathcal{O}} (\gamma^{2(\eta-1)})\,. 
\end{equation}
When  $b$ is a fixed constant (i.e. $\eta=0$), \eqref{eq:2terms-asym} is known \cite[Prop.~5.1]{30} and in the absence of the magnetic field ($b=0$)  \cite[Thm.~1.1]{7} and \cite[Thm.~1]{32} .
Notice that, the remainder term is of lower order as long as $\eta<\frac32$.
If $\eta=\frac32$, we have by \cite{18},
\[ \lambda_{n}(b,\gamma)=-\gamma^{2}+\bigg(\kappa_{\max}+\dfrac{b^{2} \gamma^{-3}}{4}\bigg)\gamma+o(\gamma)\,.\] }

In  a generic situation where the curvature has a unique non-degenerate maximum, Theorem~\ref{Q2} allows us to determine the leading order term of the spectral gap between the successive eigenvalues. This is valid under the following assumption:
$$\textbf{Assumption A\,\,\,}\left\{\begin{array}{l}\text { $\kappa$ attains its maximum } \kappa_{\text {max }} \text { at a unique point; } \\ \text { the maximum is non-degenerate, i.e. } \kappa^{\prime \prime}(0)<0\, ,\end{array}\right.$$
where $\kappa_{\text{max}}$ is introduced in \eqref{NR}, denotes the maximal curvature along the boundary  $\partial\Omega$ and the arc-length parametrization of the boundary is selected  so that  $\kappa(0)=\kappa_{\text{max}}\,$.\\

Under \textbf{Assumption A\,}, we study the three terms of the   eigenvalue asymptotics  of the effective operator and find that
\begin{equation}\label{eq:eff-asymp-sg} \lambda_{n}(\,\mathcal{L}_{\gamma,\beta_{0}}^{\mathrm{eff}})= -1+\kappa_{\max}\gamma^{-1}+(2n-1)\sqrt{\frac{-\kappa^{\prime \prime}(0)}{2}}|\gamma|^{-3/2}+o(|\gamma|^{-3/2})\,. 
\end{equation}
This yields the following corollary of Theorem~\ref{Q2}:
\begin{corollary}
\label{abc}
Let $\, 0\leq \eta<\frac{5}{4}\,$ and  $0< c_{1}<c_{2}$\,. Suppose that 
 \[c_{1}|\gamma|^{\eta} \leq b \leq c_{2}|\gamma|^{\eta}\,. \]
 Under the \textbf{Assumption A}, as  $\gamma$ tends to $-\infty\,,$ for any $n \in \mathbb{N}^{*}\,,$ we have
$$\lambda_{n}(b,\gamma)=-\gamma^{2}+\kappa_{\text{max}} \gamma+(2n-1)\sqrt{\frac{-\kappa^{\prime \prime}(0)}{2}}|\gamma|^{1/2} +o(|\gamma|^{1/2})\,.$$
\end{corollary}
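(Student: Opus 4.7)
The plan is to derive the corollary by combining Theorem~\ref{Q2} with the three-term asymptotic expansion \eqref{eq:eff-asymp-sg} for the effective operator on the boundary circle and verifying that the error terms cooperate.

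I would first establish \eqref{eq:eff-asymp-sg} by a semi-classical harmonic approximation. Introducing the semi-classical parameter $h = |\gamma|^{-1}$ and rewriting
\[
\mathcal{L}_{\gamma,\beta_0}^{\mathrm{eff}} = -h^2(\partial_s - ib\beta_0)^2 - 1 - h\kappa(s) \qquad \text{on } \mathbb{R}/|\partial\Omega|\mathbb{Z},
\]
I gauge away the constant magnetic potential via $u(s) = e^{ib\beta_0 s} v(s)$; this replaces the periodic condition on $u$ by the twisted condition $v(s+|\partial\Omega|) = e^{-ib|\Omega|} v(s)$ and reduces the operator to $-h^2\partial_s^2 - 1 - h\kappa(s)$. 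Under Assumption~A, Agmon estimates with weight $\exp\!\bigl(c\int_0^s\sqrt{(\kappa_{\max}-\kappa(t))/h}\,dt\bigr)$ show that the low-lying eigenfunctions are Gaussian-localized on scale $s\sim h^{1/4}$ near $s=0$, so the twisted boundary condition contributes only exponentially small corrections. Taylor-expanding $\kappa(s) = \kappa_{\max} + \tfrac{1}{2}\kappa''(0)s^2 + O(s^3)$ and rescaling $s = h^{1/4}\sigma$ reduces the leading eigenvalue problem to the harmonic oscillator $-\partial_\sigma^2 + \tfrac{|\kappa''(0)|}{2}\sigma^2$ on $\mathbb{R}$, whose $n$-th eigenvalue is $(2n-1)\sqrt{-\kappa''(0)/2}$. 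Matching quasi-mode upper bounds and Agmon-based lower bounds produce \eqref{eq:eff-asymp-sg}.

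Then I multiply \eqref{eq:eff-asymp-sg} by $\gamma^2$ (noting $\gamma<0$ and $|\gamma|^{-3/2}\gamma^2 = |\gamma|^{1/2}$) to obtain
\[
\gamma^2 \lambda_n(\mathcal{L}_{\gamma,\beta_0}^{\mathrm{eff}}) = -\gamma^2 + \kappa_{\max}\gamma + (2n-1)\sqrt{-\kappa''(0)/2}\,|\gamma|^{1/2} + o(|\gamma|^{1/2}).
\]
Theorem~\ref{Q2} introduces an additional error of size $\tilde{\mathcal{O}}(\gamma^{2(\eta-1)}) = \mathcal{O}(|\gamma|^{2(\eta-1)-\epsilon})$ for every $\epsilon>0$. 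The hypothesis $\eta<\tfrac{5}{4}$ is equivalent to $2(\eta-1)<\tfrac{1}{2}$, so choosing $\epsilon>0$ sufficiently small makes this error $o(|\gamma|^{1/2})$, which can be absorbed into the remainder to yield the stated asymptotic.

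The main obstacle is the proof of \eqref{eq:eff-asymp-sg}: handling the quasi-periodic boundary conditions produced by the gauge transformation and controlling the higher-order terms in the Taylor expansion of $\kappa$, both of which must beat $|\gamma|^{-3/2}$. Once exponential localization near $s=0$ is rigorously established via the $h$-dependent Agmon metric, the remainder is routine semi-classical harmonic approximation on the line; the fact that the magnetic field drops out of the three-term expansion is then transparent, as it survives only through the boundary twist, whose effect is exponentially smaller than any polynomial correction in $h$.
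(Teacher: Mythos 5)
Your proposal is correct and follows essentially the same route as the paper: Theorem \ref{Q2} combined with a proof of \eqref{eq:eff-asymp-sg} via Agmon localization at the non-degenerate curvature maximum, removal of the magnetic flux by a gauge transformation (the paper cuts off to the interval $]-L,L[$ and compares with a flux-free Dirichlet operator, while you keep the twisted periodicity on the circle and argue the twist is exponentially negligible; the same localization estimate underlies both variants), and harmonic approximation with quasimode upper bounds matched by min-max lower bounds. Your final bookkeeping, namely that $2(\eta-1)<\tfrac12$ is equivalent to $\eta<\tfrac54$ so the $\tilde{\mathcal O}(\gamma^{2(\eta-1)})$ error from Theorem \ref{Q2} is absorbed into $o(|\gamma|^{1/2})$, is exactly the paper's justification of the restriction on $\eta$.
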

 { The technical condition on $\eta$ appears as follows. After inserting \eqref{eq:eff-asymp-sg} into the estimate of Theorem~\ref{Q2}, we get the error $\tilde O(\gamma^{2(\eta-1)})$, which  is of  order $o(|\gamma|^{1/2})$ if and  only if $\eta<\frac54$. }

In the asymptotics  of Corollary~\ref{abc}, the dependence with respect to  the labeling of the eigenvalues, $n$, appears in the third term of the expansion. However,  in the generic situations described by \textbf{Assumption A\,}, the contribution of the magnetic field is  hidden in the remainder term, and is of lower order compared to that of the curvature. Corollary~\ref{abc} follows from the spectral asymoptotics of the  1D effective operators, which is valid under a weaker assumption on the strength  of the magnetic field ($b=\mathcal O(|\gamma|^{5/4})$) when compared to the  one in Theorem~\ref{Q2} ($b=\mathcal O(|\gamma|^{3/2})$).\\

In the  case of disc domains, \textbf{Assumption A\,\,\,} fails, but we can improve Theorem~\ref{Q2} and display the influence of the magnetic field in  the lowest eigenvalue asymptotics.
\begin{theorem}
\label{86}
 Assume that $\Omega=D(0,1)$  is the unit disc and that $ \,0\leq \eta<1 $. We  have, as $\gamma\to-\infty$,
$$\lambda_{1}(b,\gamma)=-\gamma^{2}+\gamma+\left(\inf _{m \in \mathbb{Z}}\left(m-\frac{b}{2}\right)^{2}-\frac{1}{2}\right)+o(1)\,,$$
uniformly with respect  to $b$ up to $b=\mathcal O(|\gamma|^\eta)$.
\end{theorem}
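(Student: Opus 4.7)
The plan is to exploit the rotational invariance of the disc. In the symmetric gauge $A_0(x)=\tfrac12(-x_2,x_1)$ the operator $\mathcal P^b_\gamma$ commutes with rotations, and writing $u(r,\theta)=\sum_{m\in\mathbb Z}\phi_m(r)e^{im\theta}$ in polar coordinates yields an orthogonal decomposition $\mathcal P^b_\gamma=\bigoplus_{m\in\mathbb Z}\mathcal P^{b,m}_\gamma$ with
\[
\mathcal P^{b,m}_\gamma\phi = -\tfrac1r(r\phi')' + \tfrac{(m-br^2/2)^2}{r^2}\phi\quad\text{on}\quad L^2((0,1),r\,dr),
\]
Robin datum $\phi'(1)+\gamma\phi(1)=0$ (note $\nu\cdot A_0=0$ on $\partial D(0,1)$). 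Hence $\lambda_1(b,\gamma)=\inf_{m\in\mathbb Z}\lambda_1^{(m)}(b,\gamma)$, and it suffices to expand each $\lambda_1^{(m)}$ uniformly for $m$ in a bounded window around $b/2$ and to exclude the remaining $m$ by a rough energy estimate.

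For each admissible $m$ I would perform a boundary-layer analysis. Setting $h=|\gamma|^{-1}$, the ground state concentrates on scale $h$ near $r=1$; rescaling $t=(1-r)/h$ and conjugating by $(1-ht)^{1/2}$ to flatten the radial measure to $dt$ turns the eigenvalue equation into $\tilde{\mathcal K}^{b,m}_h\tilde\psi=h^2\lambda\tilde\psi$ on $L^2(\mathbb R_+,dt)$, with
\[
\tilde{\mathcal K}^{b,m}_h = -\partial_t^2 + \tfrac{h^2(m-b(1-ht)^2/2)^2}{(1-ht)^2} - \tfrac{h^2}{4(1-ht)^2}
\]
and shifted Robin datum $\tilde\psi'(0)+(1+h/2)\tilde\psi(0)=0$. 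Taylor-expanding in $h$ gives
\[
\tilde{\mathcal K}^{b,m}_h = -\partial_t^2 + h^2\bigl[(m-b/2)^2-\tfrac14\bigr] + h^3\bigl[2(m^2-b^2/4)-\tfrac12\bigr]t + \mathcal O\bigl(h^4 t^2(m^2+b^2)\bigr);
\]
for $m$ in the window $|m-b/2|\leq L$ the cancellation $m^2-b^2/4=(m-b/2)(m+b/2)=\mathcal O(bL)$ keeps the $h^3$ coefficient of size $\mathcal O(b)$ rather than $\mathcal O(b^2)$, which will be the crucial point for the uniformity.

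I then run the formal expansion $\tilde\psi=\psi_0+h\psi_1+h^2\psi_2+\cdots$, $\mu=-1+h\mu_1+h^2\mu_2+\cdots$, with ground state $\psi_0(t)=e^{-t}$ of $-\partial_t^2$ on $\mathbb R_+$ with datum $\psi'(0)+\psi(0)=0$ at eigenvalue $-1$. At order $h$ only the boundary shift $+h/2$ contributes, and solvability gives $\mu_1=-1$; at order $h^2$ the interior constant $(m-b/2)^2-\tfrac14$ combines with the boundary contribution $-\psi_1(0)/2=-\tfrac18$ (where $\psi_1=\tfrac14 e^{-t}-\tfrac12 te^{-t}$) to yield, via one integration by parts, $\mu_2 = (m-b/2)^2 - \tfrac12$. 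Inverting $\lambda_1^{(m)}=\mu/h^2$ together with $\gamma=-1/h$ gives
\[
\lambda_1^{(m)}(b,\gamma) = -\gamma^2 + \gamma + (m-b/2)^2 - \tfrac12 + o(1),
\]
the remainder being controlled by $h\mu_3=\mathcal O(|\gamma|^{\eta-1})$ and $h^2\mu_4=\mathcal O(|\gamma|^{2\eta-2})$, both $o(1)$ precisely when $\eta<1$. The formal expansion is upgraded in the usual way: the truncated quasi-mode $(\psi_0+h\psi_1)\chi(ht)$ gives the upper bound via the min-max principle, and the matching lower bound follows from a Grushin/Feshbach reduction onto $\mathrm{span}(\psi_0)$, controlled by the $\mathcal O(1)$ spectral gap of $-\partial_t^2$ on $\mathbb R_+$ with datum $\psi'(0)+\psi(0)=0$.

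It remains to take the infimum in $m$. For $|m-b/2|\geq L$ one observes that on the strip $r\in[1-1/(2b),1]$ (which contains the $|\gamma|^{-1}$ boundary layer since $\eta<1$), the potential satisfies $(m-br^2/2)^2/r^2\geq \tfrac14(m-b/2)^2$, and a quasi-localisation together with the standard trace inequality for $H^1(\Omega)$ forces $\lambda_1^{(m)}(b,\gamma)\geq -\gamma^2 + \gamma + L^2/8$ once $L$ exceeds a fixed constant. Hence only $m$ in a bounded window around $b/2$ can realise the infimum; the expansion applies uniformly on that window, and collecting terms yields
\[
\lambda_1(b,\gamma) = -\gamma^2 + \gamma + \Bigl(\inf_{m\in\mathbb Z}(m-b/2)^2 - \tfrac12\Bigr) + o(1)
\]
uniformly in $b=\mathcal O(|\gamma|^\eta)$. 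The main anticipated obstacle is exactly this uniformity: at each order $k$ the perturbation coefficients are polynomials of degree $k-1$ in $(m,b)$, so the cancellation $m^2-b^2/4=(m-b/2)(m+b/2)$ and its higher-order analogues must be tracked carefully, in parallel with the boundary-value estimates for the correctors $\psi_k$, so that both the quasi-mode construction and the Grushin lower bound remain accurate simultaneously for every admissible $b$ and every $m$ in the window.
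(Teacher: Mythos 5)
Your proposal is correct in outline but follows a genuinely different route from the paper. The paper obtains Theorem~\ref{86} as a by-product of its general machinery: Agmon localization to a tubular neighbourhood, the Feshbach projection onto the transverse Robin ground state $v_{\kappa(s),h}$, and the resulting effective operator $\mathcal{L}_{h,\rm disc}^{\mathrm{eff},\pm}=-h(1\pm c_\pm h^{\min(\alpha,1/2)})(\partial_s-ib/2)^2-1-h^{1/2}-h/2$ on $L^2(\mathbb R/2\pi\mathbb Z)$, which is then diagonalized exactly by the Fourier basis $e^{ims}/\sqrt{2\pi}$, yielding $\inf_m(m-b/2)^2$ with no further work; the restriction $\eta<1$ enters only through the error $\mathcal O(h^{3-\alpha-\eta})=o(h^2)$ of Theorem~\ref{R21}. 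You instead fiber first by angular momentum (the route of \cite{2} for fixed $b$), and then run a one-dimensional boundary-layer expansion in each fiber, tracking the $(m,b)$-dependence of the perturbation coefficients; your identification of the cancellation $m^2-b^2/4=(m-b/2)(m+b/2)=\mathcal O(bL)$ on the window $|m-b/2|\le L$ is exactly what makes the expansion uniform for $b=\mathcal O(|\gamma|^\eta)$, $\eta<1$, and your limiting exponents match the paper's. What the paper's approach buys is that the disc case costs essentially nothing beyond Theorem~\ref{Q2}; what yours buys is a self-contained, elementary treatment of the disc that bypasses the Born--Oppenheimer reduction entirely. Two points deserve more care than your sketch gives them: (i) the passage from the interval $t\in(0,h^{-1})$ to the half-line model, and the validity of the Taylor expansion of the potential, both require a preliminary localization of each fiber's ground state to the boundary layer, uniformly in $m$; and (ii) the exclusion of $|m-b/2|\ge L$ cannot rely on pointwise positivity of the potential alone, since the trace term $\gamma|\phi(1)|^2$ is large and negative — an IMS splitting between the strip $[1-1/(2b),1]$ and its complement is needed to combine the $-\gamma^2+\gamma$ lower bound with the $L^2/4$ gain. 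Both are standard and do not affect the soundness of the argument.
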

Theorem~\ref{86}  was known when $b$ is fixed \cite[Thm. 1.1]{2}.  The proof of Theorem~\ref{86} relies on  the derivation  of  the following effective operator, which is more accurate than the one in \eqref{eq:def-efop} (note that in the unit disc, the curvature is constant and equal to 1),
\begin{equation}\label{eq:def-efop*}
\mathcal{L}_{\gamma,\rm disc}^{\mathrm{eff}} =-\gamma^{-2}(\partial_{s}-ib\beta_{0})^{2}-1+ \gamma^{-1} -\frac{1}{2}\gamma^{-2}\,.
\end{equation}

 \subsection{Organization of the paper.} The paper is organized as follows. In Section \ref{103}, we introduce an effective semiclassical parameter, we discuss a semi-classical version of the operator and we recall why the eigenfunctions are localized, via Agmon estimates near the boundary. As a consequence, we replace the initial problem by a problem on a thin tubular neighborhood of the boundary. In Section  \ref{104}, by using the Born-Oppenheimer approximation, we derive an effective operator whose eigenvalues simultaneously describe the eigenvalues of the magnetic Robin Laplacian. In Section  \ref{105},  we show that the
bound states for the effective operator decay exponentially away from points of maximal curvature. Then we reduce the study to a free-flux operator, which is a perturbation of the harmonic oscillator. Moreover, we estimate the eigenvalues for the effective operator with large magnetic field, thereby proving Corollary \ref{abc}.  Finally, in Section  \ref{5é}, In the case of the disc domains, we describe the term which determines the influence of the magnetic field on the spectrum of the Robin Laplacian, thereby proving Theorem \ref{86}.
 In Appendix \ref{oi1}, we recall the known results related to a family of one dimensional auxiliary differential operators.  
\section{Boundary coordinates}\label{bc}

The key to proving Theorem \ref{Q2} \ is a reduction to the boundary. Indeed, the eigenfunctions associated with eigenvalues lower than $ - \epsilon \, h \, $ with $\epsilon>0$ concentrate exponentially near the boundary (cf. Proposition \ref {agmon}). 
To single out the influence of the boundary curvature, we need a special coordinate system displaying the arc-length along the boundary and the normal distance to the boundary. We will refer to such as boundary coordinates. 
In this section, we introduce the necessary notation  to use these coordinates (cf. \cite{13,3}). Let
$$ \mathbb{R}/(|\partial \Omega|\mathbb{Z})\ni s \longmapsto M(s)\in \partial\Omega\,$$
be the arc-length parametrization of $\partial \Omega$. We will always work with $|\partial\Omega|$-periodic functions sometimes restricted to the interval
 $$ \left]-\frac{|\partial\Omega|}{2} , \frac{|\partial\Omega|}{2}\right]=]-L,L\,]\,.$$
At the point $M(s) \in \partial \Omega, T(s)=M^{\prime}(s)$ is the unit tangent vector and $\nu(s)$ is the unit  normal vector such that
$$
\forall s \in \mathbb{R} /(|\partial \Omega| \mathbb{Z}), \quad \operatorname{det}(T(s), \nu(s))=1 \,.
$$
The curvature $\kappa(s)$ at point $M(s)$ is then defined as follows
$$
T^{\prime}(s)=\kappa(s) \nu(s)\,.
$$
The smoothness of the boundary yields the existence of a constant $t_{0}>0$ such that, upon defining$$
V_{t_{0}}=\left\{x \in \Omega: \operatorname{dist}(x, \partial \Omega)<t_{0}\right\}
$$
 the change of coordinates
$$
\Phi: \mathbb{R} /(|\partial \Omega| \mathbb{Z}) \times\left(0, t_{0}\right) \ni(s, t) \mapsto x=M(s)-t \nu(s) \in V_{t_{0}}
$$
becomes a diffeomorphism. Let us note that, for $x \in V_{t_{0}}$, one can write
$$
\Phi^{-1}(x):=(s(x), t(x)) \in \mathbb{R} /(|\partial \Omega| \mathbb{Z}) \times\left(0, t_{0}\right)
$$
where $t(x)=\operatorname{dist}(x, \partial \Omega)$ and $s(x) $ is the coordinate of the point $M(s(x))\in \mathbb{R} /(|\partial \Omega| \mathbb{Z})$ satisfying the relation $\operatorname{dist}(x, \partial \Omega)=$
$|x-M(s(x))|\,.$
\\
The inverse of $\Phi$ defines a coordinate system on a tubular neighborhood of $\partial\Omega $ in $\overline{\Omega}$ that can be used locally or semi-globally. Now we express various integrals in the new coordinates $(s, t)$.
\\\\
For all $ s, $ the Jacobian matrix $ J_{\Phi} $ is written, as the $2\times2$ matrix with column vectors $M^{\prime}-t \nu^{\prime}$ and $\nu$
$$
J_{\Phi}(s, t)=\left(M^{\prime}(s)-t \nu^{\prime}(s), \nu(s)\right)\,.
$$
As $ \nu^{\prime}(s)=\kappa(s) M^{\prime}(s),$ the determinant of the Jacobian matrix of the transformation $ \Phi^{- 1} $ is given by: $$\mathrm{det} J_{\Phi}(s,t) = 1-t \kappa(s). $$
In the new coordinates, the components of the vector field $\mathbf{A}_{0}$ are given as follows,
$$
\begin{array}{l}
\tilde{A}_{1}(s, t)=(1-t \kappa(s)) \mathbf{A}_{0}(\Phi(s, t)) \cdot M^{\prime}(s)\,, \\
\tilde{A}_{2}(s, t)=\mathbf{A}_{0}(\Phi(s, t)) \cdot \nu(s)\,.
\end{array}
$$
The new magnetic potential $\tilde{A}_{0}=(\tilde{A}_{1},\tilde{A}_{2})$ satisfies,
$$\bigg[\dfrac{\partial \tilde{A}_{2} }{\partial s}(s,t)- \dfrac{\partial \tilde{A}_{1} }{\partial t}(s,t)\bigg] ds \wedge dt =\text{curl} A_{0}(\Phi^{-1}(s, t))dx \wedge dy=(1-t \kappa(s))ds \wedge dt\,.$$
For all  $ u \in \textit{$L^{2}( V_{t_{0}})$}$, we assign the pull-back function $\tilde{u}$   defined in the new coordinates as follows $$\tilde{u} =u\circ \Phi\,. $$
Consequently, for all  $\, u \in \textit{$H^{1}( V_{t_{0}})$}\, ,$ we have
\begin{align*}
  &  \displaystyle{\int_{V_{t_{0}}}}|(\nabla -ib\textbf {$ A_{0}$})u(x)|^{2} \mathrm{~d} x \\&\quad\quad\quad= \displaystyle{\int_{\Phi^{-1}\left(V_{t_{0}}\right)}}\left[ \,(1-t\kappa(s))^{-2}|(\partial_{s}-ib\tilde{\textbf {$ A_{1}$}}){\tilde{u}}|^{2}+|(\partial_{t}-ib\tilde{\textbf {$ A_{2}$}}){\tilde{u}}|^{2} \right] (1-t\kappa(s))\mathrm{~d} t\mathrm{d} s \,,
\end{align*}

 $$ \int_{V_{t_{0}}}|u(x)|^{2} \mathrm{~d} x = \displaystyle{\int_{\Phi^{-1}\left(V_{t_{0}}\right)}} |{\tilde{u}}|^{2}(1-t\kappa(s))\mathrm{~d} t\mathrm{d} s\, ,$$
and
$$ \int_{V_{t_{0}}\cap \,\partial\Omega}|u(x)|^{2} \mathrm{~d} x = \int |\tilde{u}(s,t=0)|^{2}\mathrm{~d} s\, . $$ 
\\
The following lemma can be found in \cite{4} .
\begin{lemma}
\label{tutu}
 For $t_{0}>0\,$ small enough, there exists a gauge transformation  $\varphi(s,t)$ on  $\mathbb{R} /(|\partial \Omega| \mathbb{Z}) \times\left(0, t_{0}\right)$ such that   $ \bar{\textbf {$ A_{0}$}}$ defined by
$$ \bar{\textbf {$ A_{0}$}}=\tilde{\textbf {$ A_{0}$}}-\nabla_{(s,t)}\varphi\,, $$ satisfies
$$\bar{\textbf {$ A_{0}$}}(s,t)=
\begin{pmatrix}
   \bar{\textbf {$ A_{1}$}}(s,t)\\\bar{\textbf {$ A_{2}$}}(s,t)
\end{pmatrix}=
\begin{pmatrix}
  \beta_{0} -t+\dfrac{t^{2}}{2} \kappa(s) \\0
\end{pmatrix}\,,$$
with $$\beta_{0} =\dfrac{1}{|\partial\Omega|}\displaystyle\int_{\Omega}\mathrm{curl}\textbf {$ A_{0}$} \,dx =\dfrac{|\Omega|}{|\partial\Omega|}\,, $$
 and for all  $\, u \in \textit{$H^{1}( V_{t_{0}})$}\, ,$ we have $$
\displaystyle{\int_{V_{t_{0}}}}|(\nabla -ib\textbf {$ A_{0}$})u(x)|^{2} \mathrm{~d} x =
\int_{\Phi^{-1}\left(V_{t_{0}}\right)}\left((1-t k(s))^{-2}\left|\left( \partial_{s}-ib\bar{\textbf {$ A_{1}$}}\right) w\right|^{2}+\left|\partial_{t} w\right|^{2}\right)(1-t k(s)) \mathrm{~d}  s \mathrm{d} t
$$
where  $w=\mathrm{e}^{i\varphi} \tilde{u}$  and $\tilde{u}=u \circ \Phi \text {.}$
\end{lemma}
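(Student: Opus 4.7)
The plan is to construct the gauge function $\varphi$ via line integration after showing that the $1$-form $\tilde{\mathbf{A}}_0-\bar{\mathbf{A}}_0$ is closed on the cylindrical strip $\mathbb{R}/(|\partial\Omega|\mathbb{Z})\times(0,t_0)$ and has vanishing period around its generating loop, and then to derive the quadratic form identity from gauge covariance of the magnetic gradient. First I would check closedness: the candidate $\bar{\mathbf{A}}_0(s,t)=(\beta_0-t+\tfrac{t^2}{2}\kappa(s),0)$ satisfies
$$\partial_s\bar{A}_2-\partial_t\bar{A}_1=0-\bigl(-1+t\kappa(s)\bigr)=1-t\kappa(s),$$
which matches $\partial_s\tilde{A}_2-\partial_t\tilde{A}_1=1-t\kappa(s)$ displayed just before the lemma (the latter identity itself following from $\operatorname{curl}\mathbf{A}_0=1$ combined with $\det J_\Phi=1-t\kappa(s)$). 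Hence $\tilde{\mathbf{A}}_0-\bar{\mathbf{A}}_0$ is a closed $1$-form on the strip.

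Because the strip is topologically an annulus, a closed $1$-form need not be exact, and one must verify that its circulation around the generating loop vanishes; by closedness it suffices to check this at $t=0$. For the candidate, the constant function $\bar{A}_1(s,0)=\beta_0$ integrates to $|\partial\Omega|\,\beta_0=|\Omega|$. For the original, $\tilde{A}_1(s,0)=\mathbf{A}_0(M(s))\cdot M'(s)$, and by Stokes' theorem
$$\int_{-L}^{L}\tilde{A}_1(s,0)\,ds=\oint_{\partial\Omega}\mathbf{A}_0\cdot dM=\int_\Omega\operatorname{curl}\mathbf{A}_0\,dx=|\Omega|.$$
The two circulations match, so the closed form is exact, and I can define, for any base point $s_0\in\mathbb{R}/(|\partial\Omega|\mathbb{Z})$,
$$\varphi(s,t)=\int_0^t(\tilde{A}_2-\bar{A}_2)(s,t')\,dt'+\int_{s_0}^{s}(\tilde{A}_1-\bar{A}_1)(s',0)\,ds'.$$
Closedness then gives $\nabla_{(s,t)}\varphi=\tilde{\mathbf{A}}_0-\bar{\mathbf{A}}_0$ on the full strip, and the period-matching ensures that $\varphi$ is $|\partial\Omega|$-periodic in $s$, so it defines a genuine function on the cylinder.

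It remains to transport the quadratic form by the unitary multiplier $e^{i\varphi}$. The standard gauge covariance relations read
$$e^{i\varphi}(\partial_s-ib\tilde{A}_1)e^{-i\varphi}=\partial_s-ib\bar{A}_1,\qquad e^{i\varphi}(\partial_t-ib\tilde{A}_2)e^{-i\varphi}=\partial_t-ib\bar{A}_2=\partial_t,$$
the last equality using $\bar{A}_2\equiv 0$. Setting $w=e^{i\varphi}\tilde{u}$ and inserting these identities into the expression for $\int_{V_{t_0}}|(\nabla-ib\mathbf{A}_0)u|^2\,dx$ established earlier in the section, the unimodular factors $|e^{-i\varphi}|^2=1$ cancel and one reads off exactly the formula claimed in the lemma. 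The substantive step is the circulation-matching in the second paragraph — this is precisely what forces the value $\beta_0=|\Omega|/|\partial\Omega|$ and is the only place where the topology of the tubular neighborhood enters; everything else reduces to the change-of-variables formula and the gauge covariance of the magnetic derivative.
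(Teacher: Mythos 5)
The paper does not actually prove Lemma~\ref{tutu}; it only cites Miqueu's thesis \cite{4} (the argument also appears in Fournais--Helffer's appendix on gauge transformations). So there is nothing in the paper to compare against line by line; what you have supplied is a complete, self-contained proof, and it is the standard one: the difference $\tilde{\mathbf A}_0-\bar{\mathbf A}_0$ is closed because both potentials have curl $1-t\kappa(s)$ in the $(s,t)$ coordinates, and on the annular strip exactness is equivalent to the vanishing of the single period, which you correctly reduce to the loop $t=0$ and which is exactly what forces $\beta_0=|\Omega|/|\partial\Omega|$ via Stokes. Your explicit primitive $\varphi$ is well defined and $|\partial\Omega|$-periodic precisely because of that period matching, and the consistency extends to every level $t$ (the circulation of $\bar A_1$ at level $t$ is $|\Omega|-t|\partial\Omega|+\pi t^2$, the area enclosed by the parallel curve, matching that of $\tilde A_1$). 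This is the right argument and identifies the one genuinely substantive point.

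Two small normalization caveats, neither fatal. First, the conjugation identity you quote, $e^{i\varphi}(\partial_s-ib\tilde A_1)e^{-i\varphi}=\partial_s-ib\bar A_1$, is off by a factor of $b$: with your $\varphi$ satisfying $\nabla\varphi=\tilde{\mathbf A}_0-\bar{\mathbf A}_0$, the correct relation is $e^{-ib\varphi}(\partial_s-ib\tilde A_1)e^{ib\varphi}=\partial_s-ib\bar A_1$, so the unitary is $w=e^{-ib\varphi}\tilde u$ (the lemma's own formula $w=e^{i\varphi}\tilde u$ carries the same inconsistency, which disappears if one rescales $\varphi$ by $b$). Second, you should be aware that the paper's stated conventions ($\nu$ outward, $\det(T,\nu)=1$, $\Phi=M-t\nu$ landing inside $\Omega$, $\kappa\ge0$ on convex arcs) are not all mutually compatible, and your Stokes computation $\oint\mathbf A_0\cdot dM=+|\Omega|$ implicitly selects the positively oriented parametrization; it is worth stating that choice explicitly, since the sign of the circulation is exactly what pins down $\beta_0$.
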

\section{Preliminaries}
\subsection{Transformation into a semi-classical problem}
\label{103}
We will prove the results of Section \ref {s1} in a semiclassical framework. Let us consider the semi-classical parameter
$$h = \gamma^{-2}\,.$$
 The limit $\gamma \rightarrow -\infty$ is now equivalent to the semi-classical limit $h \rightarrow 0^{+}$. The quadratic form can be written as
 $$\textit{$\mathcal{Q}^{b}_{\gamma } $}(u)=h^{-2}\left(\, \displaystyle{\int_{ \Omega }}|(h\nabla -ibh\textbf {$ A_{0}$})u(x)|^{2} \mathrm{~d} x - h^{\frac{3}{2}}\displaystyle{\int_{ \Gamma} }|u(x)|^{2} \mathrm{~d} s(x)\,\right):= h^{-2} \textit{$\mathcal{Q}^{b}_{h} $}(u)  \,.$$
Consequently, we obtain the self-adjoint operator depending on $h$
$$ \textit{$ \mathcal{L}^{b}_{h} $ } =  -( h \nabla -ih b\textbf{$A_{0}$})^{2}\,,$$  
with domain
$$\mathrm{Dom}(\textit{$\mathcal{L}^{b}_{h}$})= \lbrace\, u \in  \textit{$H^{2}(\Omega)$} : \nu.(\nabla -ib\textbf {$ A_{0}$})u -h^{\frac{1}{2}} u =0\,\, \ \text{on}\  \Gamma  \, \rbrace \,.$$
Clearly, $$\textit{ $ \mathcal{P}^{b}_{\gamma} $} =h^{-2} \textit{$\mathcal{L}^{b}_{h}$} \,,$$ and thus the  relation between the spectra of the operators
 $ \textit{$\mathcal{P}^{b}_{\gamma}$}$  and  $\textit{$ \mathcal{L}^{b}_{h}$}$ is displayed as follows : 
 \begin{equation}
 \label{90}
     \sigma(\textit{$ \mathcal{P}^{b}_{\gamma}$}) \, = \, h^{-2} \,\,  \sigma(\textit{$ \mathcal{L}^{b}_{h} $})\,.
 \end{equation}
Let $(\mu_{n}(h,b))_{n\in \mathbb{N}^{*}}\,$ be the sequence of eigenvalues of the operator  \textit{$ \mathcal{L}^{b}_{h} $ }.  Theorem \ref{Q2} and Corollary \ref{abc} can be reformulated in semi-classical form as follows.
 \begin{theorem}
 \label{R1}
  Let $n\in  \mathbb{N}^{*}\,,$ $ \eta  < \frac32 \,$ and  $\,0< c_{1}<c_{2}$\,. There exist $h_{0}\,>0,\,$  such that, if $h\in(0,h_{0})\,$ and $b$ satisfies  $$c_{1}h^{\frac{-\eta}{2}}\leq b \leq c_{2}h^{\frac{-\eta}{2}}\,, $$ then
we have : 
 $$| \mu_{n}(h,b)-   h\,\lambda_{n}(\mathcal{L}_{h,\beta_{0}}^{\mathrm{eff}})    |= \tilde{\mathcal{O}}(h^{3-\eta})\,,   $$
 with
$$\mathcal{L}_{h,\beta_{0}}^{\mathrm{eff}}=-h(\partial_{s}-ib\beta_{0})^{2}-1-\kappa(s) h^{\frac{1}{2}}\,.$$
 \end{theorem}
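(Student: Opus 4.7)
My plan is to prove Theorem~\ref{R1} by reducing the 2D semiclassical eigenvalue problem for $\mathcal{L}^b_h$ to the 1D effective operator $\mathcal{L}^{\mathrm{eff}}_{h,\beta_0}$ on $\partial\Omega$ via a Born--Oppenheimer scheme, with the normal variable $t$ as the fast (frozen) coordinate and the arc-length $s$ as the slow one. The magnetic field enters only through the explicit gauge-fixed form of the potential $\bar A_1 = \beta_0 - t + \tfrac12 t^2\kappa(s)$ given by Lemma~\ref{tutu}, so the whole analysis reduces to tracking how this potential interacts with the transverse ground state.

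\textbf{Step 1: Localization and rescaling.} The Agmon-type decay estimates (Proposition~\ref{agmon}) ensure that any eigenfunction of $\mathcal L^b_h$ with eigenvalue below $-\epsilon h$ concentrates exponentially in a thin tubular neighborhood of $\partial\Omega$, so one may replace $\mathcal{L}^b_h$ by its restriction to $V_{t_0}$ with a Dirichlet condition at $t=t_0$, up to spectrally exponentially small errors. Pulling back through $\Phi$ and using Lemma~\ref{tutu}, the quadratic form on $w = e^{i\varphi}\tilde u$ reads
\begin{equation*}
\mathcal{Q}^b_h(u) = h^2\!\int\bigl[(1-t\kappa)^{-2}|(\partial_s - ib\bar A_1)w|^2 + |\partial_t w|^2\bigr](1-t\kappa)\,ds\,dt - h^{3/2}\!\int|w(s,0)|^2\,ds.
\end{equation*}
Introducing the transverse rescaling $\tau = h^{-1/2}t$ and dividing the Rayleigh quotient by $h$, the leading transverse operator becomes $-\partial_\tau^2$ on $L^2((0,\infty))$ with the Robin condition $\partial_\tau w(s,0) + w(s,0) = 0$; it admits the simple ground state $\phi_0(\tau) = \sqrt{2}\,e^{-\tau}$ of eigenvalue $-1$, separated from the rest of the transverse spectrum by a gap of order $1$.

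\textbf{Step 2: Born--Oppenheimer projection and spectral comparison.} Decomposing a low-energy state as $w(s,\tau) = f(s)\phi_0(\tau) + r(s,\tau)$ with $\int\phi_0\bar r\,d\tau = 0$, and expanding the Jacobian $(1-h^{1/2}\tau\kappa)^{\pm 1}$ and the rescaled potential $\bar A_1^{(h)} = \beta_0 - h^{1/2}\tau + \tfrac12 h\tau^2\kappa(s)$ in powers of $h^{1/2}$, the projection onto $\phi_0$ of the rescaled operator reproduces exactly $h^{-1}\mathcal{L}^{\mathrm{eff}}_{h,\beta_0}$ acting on $f$: the curvature term $-\kappa(s)h^{1/2}$ comes from the moment $\langle\phi_0,\partial_\tau\phi_0\rangle = -1$, while the longitudinal term $-h(\partial_s - ib\beta_0)^2$ comes from the leading piece of the magnetic covariant derivative. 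The leftover magnetic contributions, schematically of the form $bh^{3/2}\tau(\partial_s - ib\beta_0)$ and $b^2h^2\tau^2$ (plus higher curvature corrections), must be absorbed into the remainder. One closes the argument by a two-sided min-max comparison: the upper bound by constructing the quasi-modes $\tilde u_j(s,t) = \chi(t/t_0)f_j(s)\phi_0(h^{-1/2}t)e^{-i\varphi}$, where $f_j$ is an eigenfunction of $\mathcal{L}^{\mathrm{eff}}_{h,\beta_0}$, and checking $\|(\mathcal L^b_h - h\lambda_j(\mathcal L^{\mathrm{eff}}_{h,\beta_0}))\tilde u_j\| = \tilde{\mathcal O}(h^{3-\eta})\|\tilde u_j\|$; the lower bound by a Feshbach block reduction in which the $O(1)$ transverse gap forces $\|r\| = \tilde{\mathcal O}(h^{(3-\eta)/2})\|f\|$ on any true eigenvector, so that the projected inequality for $f$ gives $\mu_n(h,b) \ge h\lambda_n(\mathcal L^{\mathrm{eff}}_{h,\beta_0}) - \tilde{\mathcal O}(h^{3-\eta})$.

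\textbf{Main obstacle.} The most delicate issue is the uniform control of the residual magnetic terms when $b \asymp h^{-\eta/2}$ grows. The cross term $bh^{3/2}\tau(\partial_s - ib\beta_0)$ combines the large factor $b$ with the first-order $s$-derivative, for which only the weak a priori bound $\|(\partial_s - ib\beta_0)f\|_{L^2}\lesssim h^{-1/2}$ is available from the spectral information on $\mathcal{L}^{\mathrm{eff}}_{h,\beta_0}$; a crude Cauchy--Schwarz estimate loses too many powers of $h$. One must instead either complete the square so as to absorb the cross term into a flux shift of size $bh^{1/2}/2$ (which produces only a sub-leading spectral effect for $\eta < 3/2$), or run a careful resolvent-based second-order perturbation exploiting the transverse gap. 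The threshold $\eta < 3/2$ is precisely where the worst residual $b^2h^2 \sim h^{2-\eta}$ remains below the curvature correction $-\kappa(s)h^{1/2}$ retained in $\mathcal L^{\mathrm{eff}}_{h,\beta_0}$, and dictates the final error $\tilde{\mathcal O}(h^{3-\eta})$.
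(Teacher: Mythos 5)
Your overall architecture --- Agmon localization to a tubular neighborhood, boundary coordinates with the gauge of Lemma~\ref{tutu}, transverse rescaling $\tau=h^{-1/2}t$, a Feshbach/Born--Oppenheimer projection, and a two-sided min-max comparison --- is exactly the paper's (Sections~\ref{103}--\ref{105}). Two of your steps, however, do not deliver the stated accuracy as described.

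First, you project on the \emph{frozen} half-line Robin ground state $\phi_0(\tau)=\sqrt 2\,e^{-\tau}$, whereas the paper projects, for each $s$, on the ground state $v_{\kappa(s),h}$ of the full weighted transverse operator $\mathcal H_{\kappa(s),h}=\mathcal H_{B}^{\{T\}}$ with $B=h^{1/2}\kappa(s)$, $T=h^{-\rho}$ (Section~\ref{s13}, Appendix~\ref{oi1}). This is not cosmetic. With the frozen mode, the weight $\hat a=1-h^{1/2}\tau\kappa(s)$ couples $\phi_0$ to its orthogonal complement at order $h^{1/2}$, and second-order perturbation through the $O(1)$ transverse gap produces a genuine $s$-dependent shift of order $h$ in the rescaled eigenvalue --- precisely the term $-\tfrac{\kappa(s)^2}{2}h$ that the paper carries inside $\lambda_1(\mathcal H_{\kappa(s),h})$ (see \eqref{4.4}) and inside the intermediate operators $\mathcal L^{\mathrm{eff},\pm}_{h,\beta_0}$, and which survives as the $-\tfrac{h}{2}$ term in the disc effective operator \eqref{eq:def-efop1*}. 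Relegating it to ``higher curvature corrections absorbed into the remainder'' yields an $O(h^2)$ error in $\mu_n$, which is not $\tilde{\mathcal O}(h^{3-\eta})$ when $\eta<1$. To match the paper you would need an explicit first-order corrector to $\phi_0$; the adiabatic choice $v_{\kappa(s),h}$ does this automatically, at the cost of the Born--Oppenheimer commutators $[\Pi_s,\partial_s]$, which are controlled by $\|\partial_s v_{\kappa(s),h}\|=O(h^{1/2})$ (eq.~\eqref{RO}, Lemma~\ref{L1}).

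Second, the magnetic residuals. The paper handles the cross term by the weighted inequality $|a+b|^2\le(1+h^{\alpha})|a|^2+(1+h^{-\alpha})|b|^2$ --- so the ``crude Cauchy--Schwarz'' you reject does work once the weight $h^{\pm\alpha}$ is optimized, without any flux shift or resolvent expansion: the $b$-part yields the remainder $\mathcal O(b^2h^{2-\alpha})=\mathcal O(h^{2-\alpha-\eta})$, while the $a$-part perturbs the longitudinal kinetic coefficient into $h(1\pm c_\pm h^{r})$. What your proposal omits entirely is the subsequent step that removes this multiplicative perturbation (and the $\kappa^2$ term) to land on the clean operator $\mathcal L^{\mathrm{eff}}_{h,\beta_0}$. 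That step is not free: it needs the a priori bound $\lambda_n(\mathcal L^{\mathrm{eff},b}_{h,4})=\mathcal O(\hbar)$, i.e.\ that the effective eigenfunctions have longitudinal kinetic energy $\mathcal O(h^{1/2})$ and localize near the maximum of $\kappa$; this occupies most of Section~\ref{105} (eqs.~\eqref{331}--\eqref{jiu}, Propositions~\ref{yeye} and~\ref{123}). Without it the comparison $|\widehat\mu_n-\lambda_n(\mathcal L^{\mathrm{eff}}_{h,\beta_0})|=\tilde{\mathcal O}(h^{2-\eta})$ cannot be closed.
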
 
\begin{remrak}
We distinguish two important cases of $\eta \,:$
     \item  \quad \quad \quad - If $\,0\leq\eta<\frac{5}{4}$, then $,\,\tilde{\mathcal{O}}(h^{3-\eta})=o(h^{\frac{7}{4}})\,.$
     \item  \quad \quad \quad - If $\,\frac{5}{4} \leq\eta<\frac{3}{2}$, then $,\,\tilde{\mathcal{O}}(h^{3-\eta})=o(h^{\frac{3}{2}})\,.$
 \end{remrak}
 By the method of harmonic approximation, we deduce:
 \begin{corollary}
 \label{ab}
 Let $n \in \mathbb{N}^{*}\,, $ $0 \leq \eta < \frac{5}{4} \,$ and  $\,0< c_{1}<c_{2}$\,. Under the \textbf{Assumption A}, there exist  $h_{0}\,>0,\,$ for all $h\in(0,h_{0})\, $ and $b$ satisfies  $$c_{1}h^{\frac{-\eta}{2}}\leq b \leq c_{2}h^{\frac{-\eta}{2}}\,, $$ we have : 
  $$\mu_{n}(h,b)=-h-\kappa_{\text{max}} h^{3/2}+(2n-1)\sqrt{\frac{-\kappa^{\prime \prime}(0)}{2}} h^{7/4}+ o(h^{\frac{7}{4}})\,.$$
 \end{corollary}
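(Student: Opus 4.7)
\medskip
\noindent
\textbf{Proof plan for Corollary \ref{ab}.}

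The plan is to combine Theorem \ref{R1} with a harmonic approximation for the one-dimensional effective operator $\mathcal{L}_{h,\beta_0}^{\mathrm{eff}}=-h(\partial_s-ib\beta_0)^2-1-\kappa(s)h^{1/2}$ on $\mathbb{R}/(2L\mathbb{Z})$. By Theorem \ref{R1}, under the hypothesis $c_1 h^{-\eta/2}\le b\le c_2 h^{-\eta/2}$ with $\eta<\tfrac{5}{4}$, we have
\[
\mu_n(h,b)=h\,\lambda_n(\mathcal{L}_{h,\beta_0}^{\mathrm{eff}})+\tilde{\mathcal{O}}(h^{3-\eta})=h\,\lambda_n(\mathcal{L}_{h,\beta_0}^{\mathrm{eff}})+o(h^{7/4}).
\]
Thus the whole task reduces to establishing the three-term asymptotics
\begin{equation}\label{eq:plan-eff}
\lambda_n(\mathcal{L}_{h,\beta_0}^{\mathrm{eff}})=-1-\kappa_{\max}h^{1/2}+(2n-1)\sqrt{-\kappa''(0)/2}\,h^{3/4}+o(h^{3/4}).
\end{equation}

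First I would remove the magnetic phase. Since $\mathcal{L}_{h,\beta_0}^{\mathrm{eff}}$ is scalar on the circle, the substitution $u(s)\mapsto e^{ib\beta_0 s}u(s)$ formally turns the operator into $-h\partial_s^2-1-\kappa(s)h^{1/2}$ at the cost of replacing the periodic boundary condition by the quasi-periodic condition $u(L)=e^{-2iLb\beta_0}u(-L)$. Because the putative ground states will concentrate at the unique point $s=0$ on a scale $h^{1/4}$ (which is well inside $]-L,L[$), this quasi-periodic condition can be replaced, modulo errors exponentially small in $h$, by Dirichlet conditions at $s=\pm L$. This is the mechanism by which the magnetic field disappears from the three-term asymptotics, as announced in the introduction.

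Next I would carry out the harmonic approximation for $\widehat{\mathcal{L}}_h:=-h\partial_s^2-1-\kappa(s)h^{1/2}$ on $]-L,L[$ with (asymptotically) Dirichlet conditions. The upper bound comes from quasi-modes: using the Taylor expansion $\kappa(s)=\kappa_{\max}+\tfrac{1}{2}\kappa''(0)s^2+\mathcal{O}(s^3)$ near the non-degenerate maximum and the rescaling $s=h^{1/4}\sigma$, the operator formally becomes
\[
\widehat{\mathcal{L}}_h\;\approx\;-1-\kappa_{\max}h^{1/2}+h^{3/4}\Bigl(-\partial_\sigma^2-\tfrac{1}{2}\kappa''(0)\sigma^2\Bigr)+\mathcal{O}(h),
\]
whose Hermite eigenfunctions, truncated by a cut-off of size $h^{1/4-\epsilon}$ and multiplied back by the gauge $e^{-ib\beta_0 s}$, yield test functions giving the upper bound in \eqref{eq:plan-eff}. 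The matching lower bound follows from Agmon-type exponential decay estimates for the eigenfunctions of $\mathcal{L}_{h,\beta_0}^{\mathrm{eff}}$ away from $s=0$, combined with an IMS partition of unity that localizes on the same scale $h^{1/4-\epsilon}$, reducing the problem to the rescaled harmonic oscillator plus a controlled remainder.

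I expect the main technical obstacle to be the rigorous handling of the magnetic phase on the compact circle: although gauging is trivial on $\mathbb{R}$, on $\mathbb{R}/(2L\mathbb{Z})$ one must argue that the quasi-periodicity does not alter the eigenvalues up to order $o(h^{3/4})$, uniformly in $b=\mathcal{O}(h^{-\eta/2})$ with $\eta<5/4$. The Agmon estimates for the 1D operator (with magnetic field) must therefore be strong enough that the exponentially small tails absorb the contribution from the boundary condition, in spite of the potentially large $b$; this is exactly where the restriction $\eta<5/4$ is expected to play a role, matching the range already dictated by the error in Theorem \ref{R1}.
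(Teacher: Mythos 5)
Your strategy coincides with the paper's: Theorem \ref{R1} reduces everything to a three-term asymptotics for the 1D effective operator; the flux is removed by conjugating with $e^{-ib\beta_{0}s}$ together with a cut-off and a comparison with a Dirichlet realization on $]-L,L[$ (the paper's Proposition \ref{L2,}), justified by Agmon localization at the curvature maximum (Proposition \ref{123}); and the asymptotics then follow from the harmonic approximation, with Hermite quasi-modes for the upper bound and a harmonic-oscillator comparison for the lower bound. The one concrete error is the localization/rescaling scale. Writing $\hbar=h^{1/4}$, the (shifted, renormalized) effective operator is the semiclassical operator $-\hbar^{2}(\partial_{s}-ib\beta_{0})^{2}+(\kappa_{\max}-\kappa(s))$, whose low-lying eigenfunctions concentrate on the scale $\hbar^{1/2}=h^{1/8}$, not $h^{1/4}$. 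With your substitution $s=h^{1/4}\sigma$ the kinetic term becomes $-h^{1/2}\partial_{\sigma}^{2}$ while the quadratic part of the potential becomes $-\tfrac{1}{2}\kappa''(0)\,h\,\sigma^{2}$; these carry different powers of $h$ and do not combine into $h^{3/4}\bigl(-\partial_{\sigma}^{2}-\tfrac{1}{2}\kappa''(0)\sigma^{2}\bigr)$ as you display, and a cut-off at scale $h^{1/4-\epsilon}$ would sit \emph{inside} the concentration region and destroy the quasi-mode. The correct substitution is $s=h^{1/8}\sigma$ (the paper's \eqref{111}), after which both terms carry the factor $h^{3/4}$ and the cubic Taylor remainder contributes $\mathcal{O}(h^{7/8})=o(h^{3/4})$, which suffices. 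Finally, a small misattribution: the restriction $\eta<\tfrac{5}{4}$ comes solely from requiring $\tilde{\mathcal{O}}(h^{3-\eta})=o(h^{7/4})$ in Theorem \ref{R1}; the 1D analysis (Agmon estimates and flux removal) is uniform in $b$, because the magnetic term $-2ib\beta_{0}\psi\,\partial_{s}e^{\phi}$ only contributes to the imaginary part in the Agmon identity and therefore drops out.
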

 In the case of the disc, we derive the following effective operator
\begin{equation}\label{eq:def-efop1*}
\mathcal{L}_{h,\rm disc}^{\mathrm{eff}} =-h(\partial_{s}-ib\beta_{0})^{2}-1- h^{\frac{1}{2}}-\frac{h}{2}\,.
\end{equation}
As a consequence, we get the following theorem.
 \begin{theorem}
\label{86a}
 Assume that $\Omega=D(0,1)$  is the unit disc$, \,0\leq \eta<1 $ and  that $0< c_{1}<c_{2}$\,. There exist  $h_{0}\,>0,\,$ for all $h\in(0,h_{0})\, $  and $b$ satisfies 
 $$c_{1}h^{\frac{-\eta}{2}}\leq b \leq c_{2}h^{\frac{-\eta}{2}}\,, $$  we have : 
$$\mu_{n}(h,b)=-h-h^{\frac{3}{2}}+\left(\inf _{m \in \mathbb{Z}}\left(m-\frac{b}{2}\right)^{2}-\frac{1}{2}\right) h^{2}+o(h^{2})\,.$$
\end{theorem}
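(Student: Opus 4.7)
The plan is to exploit the rotational symmetry of the disc ($\kappa\equiv 1$, $|\partial\Omega|=2\pi$, $|\Omega|=\pi$ and $\beta_{0}=\tfrac12$) to push the Born-Oppenheimer reduction of Section~\ref{104} one order beyond Theorem~\ref{R1}, obtain an effective operator on $L^{2}(\mathbb{R}/2\pi\mathbb{Z})$ with \emph{constant} coefficients, and then diagonalise it explicitly by Fourier series. The crucial observation is that, in the gauge of Lemma~\ref{tutu}, both the Jacobian $(1-t\kappa(s))$ and the tangential potential $\bar A_{1}(s,t)=\tfrac12-t+\tfrac{t^{2}}{2}$ are independent of $s$ in the disc case.

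First I would apply the Agmon estimates of Proposition~\ref{agmon} to reduce $\mathcal L_{h}^{b}$ to a tubular neighbourhood of $\partial\Omega$ modulo exponentially small errors. Rewriting the quadratic form in boundary coordinates via Lemma~\ref{tutu} and rescaling the normal variable by $t=h\tau$, the Born-Oppenheimer ansatz factorises the low-lying eigenfunctions, up to small errors, as the product of the ground state of the one-dimensional $h$-dependent Robin profile in $\tau$ and a function of $s$ that itself solves an effective spectral problem on the boundary.

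Second, I would perform the effective reduction one order higher than that of Section~\ref{104}. Since all coefficients are $s$-independent, this refined reduction yields an $s$-\emph{independent} additional potential on the boundary, obtained by combining the next correction to the normal profile with the $O(h\tau)$ expansion of $(1-t\kappa)^{\pm 2}$; the resulting scalar contribution is exactly $-\tfrac12 h$, giving the refined effective operator
\[
\mathcal{L}_{h,\mathrm{disc}}^{\mathrm{eff}}=-h(\partial_{s}-ib\beta_{0})^{2}-1-h^{1/2}-\tfrac12 h,
\]
together with the sharpened quasimode/min-max comparison $|\mu_{n}(h,b)-h\lambda_{n}(\mathcal{L}_{h,\mathrm{disc}}^{\mathrm{eff}})|=o(h^{2})$, uniform in the range $c_{1}h^{-\eta/2}\leq b\leq c_{2}h^{-\eta/2}$ with $\eta<1$. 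Finally, $\mathcal{L}_{h,\mathrm{disc}}^{\mathrm{eff}}$ has constant coefficients, so the Fourier basis $\{(2\pi)^{-1/2}e^{ims}\}_{m\in\mathbb{Z}}$ diagonalises it with eigenvalues $h(m-\tfrac{b}{2})^{2}-1-h^{1/2}-\tfrac12 h$. Taking the infimum over $m$ and multiplying by $h$ produces $-h-h^{3/2}+\bigl(\inf_{m\in\mathbb{Z}}(m-\tfrac{b}{2})^{2}-\tfrac12\bigr)h^{2}$, which, combined with the comparison above, yields the stated expansion.

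The main obstacle is the refined effective reduction in the second step. The error in Theorem~\ref{R1} is merely $\tilde O(h^{3-\eta})$, which is $o(h^{2})$ exactly when $\eta<1$ but does not retain the additional $-\tfrac12 h$ constant. Recovering it requires solving for the next normal profile in the Born-Oppenheimer ansatz and checking that its tangential contribution is precisely the $s$-independent constant $-\tfrac12 h$; this computation hinges crucially on $\kappa\equiv 1$, since in the general setting an $s$-dependent order-$h^{2}$ correction would couple the Fourier modes and destroy the explicit diagonalisation above.
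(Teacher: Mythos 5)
Your overall route is the same as the paper's: Agmon localization to a tubular neighbourhood, passage to boundary coordinates, Born--Oppenheimer reduction to a constant-coefficient effective operator on $L^{2}(\mathbb{R}/2\pi\mathbb{Z})$, and explicit diagonalization in the Fourier basis $\{e^{ims}/\sqrt{2\pi}\}_{m\in\mathbb{Z}}$; your final effective operator and the resulting formula are exactly those of the paper. The problem lies in the step you yourself flag as the main obstacle. You assert that a reduction ``one order higher'' than that of Section~\ref{104} is required to produce the constant $-\tfrac12 h$, and you leave that computation undone. In fact no new reduction is needed: the operators $\mathcal{L}_{h,\beta_{0}}^{\mathrm{eff},\pm}$ of Theorem~\ref{R21} already carry the term $-\tfrac{\kappa(s)^{2}}{2}h$, which originates not from a second normal profile but from the two-term expansion \eqref{4.4} of the transverse ground-state energy, $\lambda_{1}(\mathcal{H}_{\kappa(s),h})=-1-\kappa(s)h^{1/2}-\tfrac{\kappa(s)^{2}}{2}h+o(h)$; this term is only discarded later, in the passage from Theorem~\ref{R21} to Theorem~\ref{R1}, which is why quoting Theorem~\ref{R1} alone made you believe it was lost. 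Since Theorem~\ref{R21} controls $\widehat{\mu}_{n}(h,b,\rho)=h^{-1}\mu_{n}(h,b,\rho)$ up to $\mathcal{O}(h^{2-\alpha-\eta})$, the error for $\mu_{n}$ is $\mathcal{O}(h^{3-\alpha-\eta})=o(h^{2})$ once $0<\alpha<1-\eta$, which is possible precisely because $\eta<1$.

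What your write-up omits, and what the paper does instead of a refined reduction, is the removal of the prefactors $(1\pm c_{\pm}h^{\min(\alpha,1/2)})$ multiplying $(\partial_{s}-ib\beta_{0})^{2}$: this is Proposition~\ref{yiyi}, which costs $\mathcal{O}(h^{\min(1+\alpha,3/2)})$ at the effective level, hence $o(h^{2})$ for $\mu_{n}$, and it uses the uniform bound $\inf_{m\in\mathbb{Z}}(m-\tfrac{b}{2})^{2}\leq 1$ so that the estimate holds for all $b$ in the admissible range. With these two points supplied, your Fourier diagonalization closes the argument exactly as in the paper. Two minor corrections: the normal rescaling is $t=h^{1/2}\tau$ (the localization scale is $|\gamma|^{-1}=h^{1/2}$), not $t=h\tau$; and one still needs Proposition~\ref{L2} to pass from the Robin--Dirichlet problem on the collar back to $\mu_{n}(h,b)$, though that contribution is exponentially small.
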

\subsection{Reduction near the boundary via Agmon estimates.}
The goal of this section is to show that the eigenfunctions associated to eigenvalues less than $ -\epsilon \,h, $ with $0<\epsilon<1$ a fixed constant, are concentrated exponentially near the boundary at the scale $ h^{1/2} \, $ . 
\begin{proposition}
\label{agmon}
 Let $M \in(-1,0)$. For all  $\alpha<\sqrt{-M}$, there exist constants $C>0$ and $h_{0} \in\,(0,1)$ such that, for $h\in (0,h_{0}),\,$ if $u_{h,b}$ is a normalized ground state of $\textit{ $ \mathcal{L}^{b}_{h} $}\,$  with eigenvalue $\mu(h,b)\,$ such that $\,\mu(h,b)< M\,h\,$, then
\begin{equation}
\label{18}
\displaystyle{\int_{ \Omega }}\left(                      |u_{h,b}(x)|^{2}+h|(\nabla -ib\textbf {$ A_{0}$})u_{h,b}(x)|^{2}\right)\exp\left(\dfrac{2\alpha  \mathrm{dist}(x,\Gamma)}{h^{1/2}}\right)\mathrm{~d} x \leq C . 
\end{equation}
\end{proposition}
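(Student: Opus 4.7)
The plan is to use a classical Agmon weighted-energy identity, combined with a semiclassical trace inequality to absorb the Robin boundary contribution. Take the Lipschitz weight $\Phi_h(x)=\alpha\,\mathrm{dist}(x,\Gamma)/h^{1/2}$, truncated at height $R$ so that $e^{\Phi_h}u_{h,b}\in H^1(\Omega)$, with $R\to\infty$ at the end by monotone convergence. Testing the eigenvalue equation $\mathcal{L}_h^b u_{h,b}=\mu(h,b)u_{h,b}$ against $e^{2\Phi_h}u_{h,b}$ and taking the real part (the Robin boundary term produces no new contribution because $\Phi_h=0$ on $\Gamma$, so that $e^{2\Phi_h}u_{h,b}=u_{h,b}$ on $\Gamma$), I would arrive at the Agmon identity
\[ Q_h^b(e^{\Phi_h}u_{h,b}) - h^2\int_\Omega|\nabla\Phi_h|^2\,e^{2\Phi_h}|u_{h,b}|^2\,dx=\mu(h,b)\,\|e^{\Phi_h}u_{h,b}\|^2. \]
Using $|\nabla\Phi_h|^2\le \alpha^2/h$ almost everywhere and $\mu(h,b)<Mh$, this rearranges to $Q_h^b(e^{\Phi_h}u_{h,b})\le (M+\alpha^2)\,h\,\|e^{\Phi_h}u_{h,b}\|^2$.

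Next I would control the boundary contribution in $Q_h^b$. From the semiclassical trace inequality $h^{3/2}\|u\|_\Gamma^2\le\epsilon\,\|h\nabla u\|^2+C_\epsilon h\,\|u\|^2$, combined with the diamagnetic-type bound $\|h\nabla u\|^2\le 2\|(h\nabla -ihbA_0)u\|^2+Ch^{2-\eta}\|u\|^2$ (using $hb=\mathcal{O}(h^{1-\eta/2})$ and $|A_0|$ bounded on $\Omega$) and the eigenvalue identity $\|(h\nabla-ihbA_0)u_{h,b}\|^2=\mu(h,b)+h^{3/2}\|u_{h,b}\|_\Gamma^2$, I would choose $\epsilon$ small to absorb terms and obtain $h^{3/2}\|u_{h,b}\|_\Gamma^2\le Ch$. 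Expanding $Q_h^b(e^{\Phi_h}u_{h,b})=\|(h\nabla-ihbA_0)(e^{\Phi_h}u_{h,b})\|^2-h^{3/2}\|u_{h,b}\|_\Gamma^2$ (again using $\Phi_h|_\Gamma=0$) and inserting the trace bound yields
\[ \|(h\nabla-ihbA_0)(e^{\Phi_h}u_{h,b})\|^2\le Ch+(M+\alpha^2)\,h\,\|e^{\Phi_h}u_{h,b}\|^2. \]
Since the left side is non-negative and $\alpha<\sqrt{-M}$ makes $M+\alpha^2<0$, this forces $\|e^{\Phi_h}u_{h,b}\|^2\le C'$, which is the first half of \eqref{18}.

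Back-substituting this bound produces $\|(h\nabla-ihbA_0)(e^{\Phi_h}u_{h,b})\|^2\le Ch$. Then the magnetic commutation identity
\[ e^{\Phi_h}(h\nabla-ihbA_0)u_{h,b}=(h\nabla-ihbA_0)(e^{\Phi_h}u_{h,b})-h(\nabla\Phi_h)\,e^{\Phi_h}u_{h,b}, \]
together with $h^2|\nabla\Phi_h|^2\le \alpha^2 h$ and the preceding bound on $\|e^{\Phi_h}u_{h,b}\|^2$, gives $\int_\Omega e^{2\Phi_h}|(h\nabla-ihbA_0)u_{h,b}|^2\,dx\le Ch$. Since $h|(\nabla-ibA_0)u|^2=h^{-1}|(h\nabla-ihbA_0)u|^2$, this is exactly the gradient part of \eqref{18}, and letting $R\to\infty$ completes the proof. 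The main technical point is the trace step: the bound $h^{3/2}\|u_{h,b}\|_\Gamma^2=\mathcal{O}(h)$ requires that the magnetic gradient correction be of lower order than the kinetic part, which is where the growth restriction on $b$ is used (and it is comfortably satisfied since $\eta<\tfrac32<2$). Everything else is mechanical once the Agmon identity is in place.
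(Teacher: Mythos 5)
Your overall strategy---the weighted Agmon identity with $\Phi_h=\alpha\,\mathrm{dist}(x,\Gamma)/h^{1/2}$, absorption of the weight gradient via $\alpha^2<-M$, and a trace inequality to control the Robin term---is sound, and it is a genuinely different route from the paper's: the paper invokes \cite[Thm.~5.1]{7} together with an IMS partition of unity at scale $Rh^{1/2}$ and the observation that the form is nonnegative (hence $\geq -Ch$) on functions supported away from $\Gamma$, whereas your direct identity avoids the localization entirely. The identity itself, the vanishing of $\Phi_h$ on $\Gamma$, the sign argument forcing $\|e^{\Phi_h}u_{h,b}\|^2\leq C$, and the back-substitution for the gradient part are all correct (and since $\Omega$ is bounded, no truncation of the weight is actually needed).

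The genuine gap is in the trace step. You replace $\|h\nabla u\|^2$ by $2\|(h\nabla-ihbA_0)u\|^2+Ch^2b^2\|u\|^2$ and assert the magnetic correction is lower order ``since $\eta<\tfrac32<2$''. The relevant comparison is $h^2b^2=\mathcal{O}(h^{2-\eta})$ against the term $C_\epsilon h\|u\|^2$ that is already present, and $h^{2-\eta}\gg h$ as soon as $\eta>1$. In that regime your chain of inequalities only yields $h^{3/2}\|u_{h,b}\|_\Gamma^2=\mathcal{O}(h^{2-\eta})$ and hence $\|e^{\Phi_h}u_{h,b}\|^2=\mathcal{O}(h^{1-\eta})$, which blows up and does not give \eqref{18}; the proposition is stated with no restriction on $b$ and is used downstream for $b$ up to order $h^{-3/4}$, so this regime cannot be discarded. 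The repair is standard and removes the $b$-dependence altogether: apply the scalar trace inequality to $|u_{h,b}|$ and use the diamagnetic inequality $|\nabla|u||\leq|(\nabla-ibA_0)u|$ a.e., which gives
\[
h^{3/2}\|u_{h,b}\|_\Gamma^2\leq\epsilon\,\|(h\nabla-ihbA_0)u_{h,b}\|^2+C_\epsilon h\,\|u_{h,b}\|^2
\]
directly; combined with your eigenvalue identity and $\mu(h,b)<Mh<0$, this yields $h^{3/2}\|u_{h,b}\|_\Gamma^2\leq Ch$ uniformly in $b$, after which the rest of your argument goes through unchanged.
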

\begin{proof}
The proof is similar to the one of \cite[Thm. 5.1]{7}, and only notice that it is a consequence modulo a partition of unity with balls of size $Rh^{1/2}$ with $R$ large enough and the fact, if the support of $u$ avoids the boundary, we have $\textit{$\mathcal{Q}^{b}_{h} $}(u) \geq -Ch\,. $
\end{proof}

We record the following simple corollary of Proposition \ref{agmon}.
\begin{corollary}
\label{C1}
Let $M \in\,(-1,0)$ , $\rho\,\in\,(0,\frac{1}{2})$ and $c>0\,$. For all $0<\alpha < \sqrt{-M}$, there exists $C>0$ and  $h_{0} \in\,(0,1)$ such that, if $u_{h,b}$ is a normalized ground state of $\textit{$\mathcal{L}^{b}_{h}$}\,$ with eigenvalue $\mu(h,b)\,$ such that  $\,\mu(h,b)< M\,h\,$, then for all  $h\in(0,h_{0}),$ 
\begin{equation}
\label{23}
\displaystyle{\int_{\mathrm{dist}(x,\Gamma)\,\geq c\, h^{\frac{1}{2}-\rho}}} \big(\,|u_{h,b}(x)|^{2}+h|(\nabla -ib\textbf {$ A_{0}$})\,u_{h,b}(x)|^{2}\,\big)\, dxC \,\leq \,\exp(- 2\,\alpha\,c\,h^{-\rho} )\,.
\end{equation}
\end{corollary}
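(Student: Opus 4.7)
The plan is to derive Corollary~\ref{C1} as a direct consequence of Proposition~\ref{agmon}, essentially a Chebyshev-type argument in the weighted inequality. The key point is that the weight $\exp(2\alpha\,\mathrm{dist}(x,\Gamma)/h^{1/2})$ appearing in \eqref{18} is comparable to a large constant (exponential in $h^{-\rho}$) on the region where $\mathrm{dist}(x,\Gamma)\geq c\,h^{1/2-\rho}$, so one simply pulls this constant out of the integral.

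More precisely, first I would apply Proposition~\ref{agmon} to the given ground state $u_{h,b}$: since $\mu(h,b)<Mh$ and $\alpha<\sqrt{-M}$, the proposition provides $h_0\in(0,1)$ and a constant $C>0$ (depending only on $\alpha$ and $M$) such that, for $h\in(0,h_0)$,
\[
\int_{\Omega}\Bigl(|u_{h,b}(x)|^{2}+h\,|(\nabla-ib\mathbf{A}_{0})u_{h,b}(x)|^{2}\Bigr)\exp\!\left(\frac{2\alpha\,\mathrm{dist}(x,\Gamma)}{h^{1/2}}\right)\mathrm{d}x\;\leq\;C.
\]
Next, on the set $\Omega_{h}:=\{x\in\Omega:\mathrm{dist}(x,\Gamma)\geq c\,h^{1/2-\rho}\}$, I would bound the weight from below by the constant value
\[
\exp\!\left(\frac{2\alpha\,\mathrm{dist}(x,\Gamma)}{h^{1/2}}\right)\;\geq\;\exp\!\left(2\alpha c\,h^{-\rho}\right),
\]
which holds pointwise on $\Omega_h$. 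Since the integrand $|u_{h,b}|^{2}+h|(\nabla-ib\mathbf{A}_{0})u_{h,b}|^{2}$ is nonnegative, restricting the integral to $\Omega_h$ and factoring out the lower bound of the weight yields
\[
\exp(2\alpha c\,h^{-\rho})\int_{\Omega_{h}}\Bigl(|u_{h,b}|^{2}+h\,|(\nabla-ib\mathbf{A}_{0})u_{h,b}|^{2}\Bigr)\mathrm{d}x\;\leq\;C.
\]
Rearranging gives exactly the bound $C\exp(-2\alpha c\,h^{-\rho})$ claimed in \eqref{23}.

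There is no real obstacle here: the only thing to be careful about is that the constant $\alpha$ in \eqref{23} can indeed be taken equal to the $\alpha$ in Proposition~\ref{agmon} (the strict inequality $\alpha<\sqrt{-M}$ is preserved), and that the $L^{2}$ and magnetic-gradient terms are handled simultaneously because Proposition~\ref{agmon} already contains both. If one wanted slightly cleaner constants, one could instead apply Proposition~\ref{agmon} with some $\alpha'\in(\alpha,\sqrt{-M})$ and absorb the extra factor $\exp\bigl(2(\alpha'-\alpha)c h^{-\rho}\bigr)$ into the constant $C$, but this is unnecessary for the stated conclusion.
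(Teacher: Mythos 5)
Your argument is correct and is exactly the (implicit) one the paper intends: the corollary is stated as a "simple corollary" of Proposition~\ref{agmon}, obtained by bounding the weight $\exp(2\alpha\,\mathrm{dist}(x,\Gamma)/h^{1/2})$ from below by $\exp(2\alpha c\,h^{-\rho})$ on the region $\mathrm{dist}(x,\Gamma)\geq c\,h^{1/2-\rho}$ and factoring it out. You also correctly read the misplaced constant $C$ in \eqref{23} as $C\exp(-2\alpha c\,h^{-\rho})$ on the right-hand side.
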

As a consequence, for small $ \, h $, the ground states  of the operator   $\,\textit{$\mathcal{L}^{b}_{h} $}\,$  are concentrated near the boundary  of $\Omega $ (cf. Corollary
\ref{C1}). This will allow us to work in a tubular neighborhood of $ \partial\Omega \,. $\\

Let $\,\rho\in(0,1/2)\,,$ we introduce the $\delta$-neighborhood of the boundary
$$\,\Omega_{\delta}=\lbrace \,x \in \Omega \, :\,\mathrm{dist}(x,\Gamma)<\delta\,\rbrace\,,\quad \delta =h^{\frac{1}{2}-\rho}\,.$$
 The quadratic form, defined on the variational space
$$ W_{\delta} =\lbrace u \in \textit{$H^{1}(\Omega_{\delta})$} \, :\, u(x)=0\, ,\, \forall \,x \in \Omega \,\text{ such that }  \,\mathrm{dist}(x,\Gamma)=\delta  \rbrace\,,$$
is given  by the formula
\begin{equation}
\label{25}
\textit{$ q^{b,\rho}_{h} $}(u)=\,h^{2}\,\, \displaystyle{\int_{ \Omega_{\delta} }}|(\nabla -ib\textbf {$ A_{0}$})u(x)|^{2} \mathrm{~d} x - h^{\frac{3}{2}}\displaystyle{\int_{\partial \Omega_{\delta}}}|u(x)|^{2} \mathrm{~d} s(x)\,.
\end{equation}
Note again that the trace on  $\partial \Omega_{\delta}$ is well-defined by a classical trace theorem. The associated self-adjoint operator is:
 $$ \textit{$ \mathcal{L}^{b,\rho}_{h} $ }\,= \,-( h \nabla -ih b\textbf{$A_{0}$})^{2}\,,$$  
with domain \begin{align*}
\mathrm{Dom}(\textit{$ \mathcal{L}^{b,\rho}_{h}$})=\, \lbrace u &  \in  \textit{$H^{2}(\Omega_{\delta})$}\,  : \,\nu.(\nabla -ib\textbf {$ A_{0}$})u -h^{\frac{1}{2}} u =0 \,  \text{ on}\ \, \Gamma \,\text{ and}\\& \quad\quad\,\,\,\,\,\,\,\,\, u(x)=0\, ,\, \forall \,x \in \Omega \,\, \,\text{ such that }  \,\mathrm{dist}(x,\Gamma)=\delta   \rbrace \, .
\end{align*}
That is, we consider the realization with mixed boundary conditions (Robin and Dirichlet conditions). Let $(\mu_{n}(h,b,\rho))_{n \in \mathbb{N}^{*}}\,$ be the sequence of eigenvalues of the operator  $\textit{$ \mathcal{L}^{b,\rho}_{h} $ }$. The following proposition  reduces  the analysis to the operator $ \textit{$ \mathcal{L}^{b,\rho}_{h} $ }\,.$
 \begin{proposition}
 \label{L2} Let $ \epsilon >0$ and $\alpha \in(0,\sqrt{\epsilon})\,,$ there exist constants $C>0 \,,\,h_{0}\,\in\,(0,1)$ such that, for all $\, h \,\in (0,h_{0})\,,$ $ n \geq 1$ and $\mu_{n}(h,b)<- \epsilon h$,
 $$\mu_{n}(h,b,\rho) \leq \mu_{n}(h,b)+C\exp(-\alpha h^{-\rho}\,)\,.$$
Moreover, we have, for all $ n \geq 1\,$ and  $h>0$
  $$\mu_{n}(h,b) \leq \mu_{n}(h,b,\rho)\,.$$
\end{proposition}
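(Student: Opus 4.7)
\bigskip

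\noindent\textbf{Proof plan for Proposition~\ref{L2}.} The strategy is to exploit the min-max characterization of eigenvalues in both directions, the lower bound being essentially automatic while the upper bound demands Agmon-type estimates to control cut-off errors.

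For the lower bound $\mu_n(h,b)\leq \mu_n(h,b,\rho)$, I would observe that every $v\in W_\delta$ vanishes on $\{\mathrm{dist}(x,\Gamma)=\delta\}$, hence its extension $\tilde v$ by zero to $\Omega\setminus\Omega_\delta$ belongs to $H^1(\Omega)$ with $\|\tilde v\|_{L^2(\Omega)}=\|v\|_{L^2(\Omega_\delta)}$ and $\mathcal{Q}^b_h(\tilde v)=q^{b,\rho}_h(v)$ (the boundary integral on the outer face $\{\mathrm{dist}=\delta\}$ disappears, so only the Robin contribution on $\Gamma$ remains, which is identical in both quadratic forms). Consequently, any $n$-dimensional trial space $F\subset W_\delta$ produces an $n$-dimensional trial space $\tilde F\subset H^1(\Omega)$ with the same Rayleigh quotients; the min-max principle then yields the inequality.

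For the upper bound, let $(u_j)_{j=1}^n$ be an $L^2$-orthonormal family of eigenfunctions of $\mathcal L^b_h$ corresponding to $\mu_1(h,b),\dots,\mu_n(h,b)$, all assumed below $-\epsilon h$. Choose a smooth cutoff $\chi\in C^\infty(\overline\Omega)$ with $\chi=1$ on $\{\mathrm{dist}(x,\Gamma)\leq \delta/2\}$, $\chi=0$ on $\{\mathrm{dist}(x,\Gamma)\geq\delta\}$, and $|\nabla\chi|\leq C\delta^{-1}$. Then $v_j:=\chi u_j\in W_\delta$ and, by the Gram matrix argument combined with Corollary~\ref{C1} (the error $\|v_j-u_j\|_{L^2}$ is exponentially small), the family $(v_j)$ is linearly independent for small $h$, so $F:=\mathrm{span}(v_1,\dots,v_n)$ is $n$-dimensional. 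For an arbitrary $w=\sum c_j u_j$ in the span of the eigenfunctions, the IMS-type localization identity gives
\begin{equation*}
q^{b,\rho}_h(\chi w) = \mathcal{Q}^b_h(\chi w) = \bigl\langle \mathcal L^b_h w,\chi^2 w\bigr\rangle + h^2\!\int_\Omega |\nabla\chi|^2|w|^2\,dx \;-\; h^2\!\int_\Omega \bigl|(\nabla-ibA_0)(\tilde\chi w)\bigr|^2 dx,
\end{equation*}
with $\tilde\chi=\sqrt{1-\chi^2}$ (using that $\chi\equiv1$ on $\Gamma$, so the Robin contribution is unchanged). Dropping the last (non-positive) term, the first term is bounded by $\mu_n(h,b)\|w\|^2$, while the gradient term is supported in $\{\delta/2\leq\mathrm{dist}\leq\delta\}$; Corollary~\ref{C1} applied to each $u_j$ (any $\alpha<\sqrt\epsilon$) yields
$$\int_{\mathrm{dist}\geq \delta/2}\!|w|^2\,dx \;\leq\; C e^{-\alpha h^{-\rho}}\|w\|^2.$$
Combined with $|\nabla\chi|^2\leq C\delta^{-2}=Ch^{2\rho-1}$, the correction is absorbed into a term $Ce^{-\alpha' h^{-\rho}}\|w\|^2$ for any $\alpha'<\alpha$. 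The same Agmon bound shows $\|\chi w\|^2\geq (1-Ce^{-\alpha' h^{-\rho}})\|w\|^2$, so the Rayleigh quotient on $F$ satisfies
$$\frac{q^{b,\rho}_h(\chi w)}{\|\chi w\|^2}\leq \mu_n(h,b)+C\exp(-\alpha'h^{-\rho}),$$
and min-max closes the argument.

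The only real delicacy is aligning the Agmon decay rate with the cutoff scale: the exponential weight in Proposition~\ref{agmon} has rate $\alpha/h^{1/2}$ while the cutoff lives at distance $\sim h^{1/2-\rho}$, which is exactly why the effective decay parameter becomes $\alpha h^{-\rho}$ and why one must choose $\alpha<\sqrt{\epsilon}$ strictly. The polynomial loss $h^{2\rho-1}$ coming from $|\nabla\chi|^2$ and the $O(1)$ loss from the Gram-matrix perturbation are harmless compared to this exponential factor.
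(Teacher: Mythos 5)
Your proposal is correct and follows essentially the same route as the paper: extension by zero of the Robin--Dirichlet eigenfunctions for the non-asymptotic inequality, and cutoff of the true eigenfunctions near the boundary combined with Corollary~\ref{C1} to control the localization errors, followed by min-max. (The localization identity you write is not quite standard --- the term in $\tilde\chi$ should not appear alongside $\langle\mathcal{L}^{b}_{h}w,\chi^{2}w\rangle+h^{2}\int|\nabla\chi|^{2}|w|^{2}$ --- but since you discard that non-positive term anyway, the resulting inequality is valid and your computation matches the paper's Gram-matrix argument.)
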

\begin{proof}  
 
The inequality $\, \mu_{n}(h,b)\, \leq \,\mu_{n}(h,b,\rho)\,$ is not asymptotic.
 Let $(v_{k,h})_{1\leq k \leq n}$ be an orthonormal family of eigenfunctions associated with eigenvalues $(\tilde{\mu}_{k}(h,b,\rho))_{1\leq k \leq n}\,.$ 
We define the function
  $$u_{k,h}    = \begin{cases} v_{k,h}& \text{if $x \in \Omega_{\delta}$ } \\ 0 & \text{if $x \in \Omega \setminus \Omega_{\delta} $} \,. \end{cases}$$ \\
 Let $E$ be the vector subspace generated by the family $(u_{k,h})_{1\leq k \leq n}$ and $u \in E$ which is written as follows:
  $$u =\sum_{k=1}^{n} \alpha_{k} u_{k,h} \,.$$
  Inserting $u$ into the quadratic  form $\textit{$ \mathcal{Q}^{b}_{h}$}(u)$, we obtain
  
  \begin{align*}
      \textit{$ \mathcal{Q}^{b}_{h}$}(u) &=\Big\langle \textit{$ \mathcal{L}^{b,\rho}_{h} $ } \sum_{k=1}^{n} \alpha_{k} v_{k,h},\sum_{k=1}^{n} \alpha_{k} v_{k,h}      \Big \rangle  \\&=\sum_{k=1}^{n} \alpha_{k} \textit{$ q^{b,\rho}_{h} $}(v_{k,h})
      \leq \mu_{n}(h,b,\rho) \sum_{k=1}^{n} \alpha_{k}\|u_{k,h}\|^{2}= \mu_{n}(h,b,\rho)\|u\|^{2}.
  \end{align*}
By the min-max theorem, we get the non-asymptotic inequality $\, \mu_{n}(h,b)\, \leq \,\mu_{n}(h,b,\rho)\,.$\\

Now we determine an upper bound of $\mu_{n}(h,b,\rho)$ in terms of $\mu_{n}(h,b)\,.$
Let $(u_{k,h})_{1\leq k \leq n}$ be an orthonormal family of eigenvectors associated with the eigenvalues $(\mu_{k}(h,b))_{1\leq k \leq n}\,.$
Let us consider the cut off function define on $\mathbb{R}$ as following:
 $$0\leq \chi\leq1 \,\,,\,\, \chi=1 \,\,\text{ on } \,\,]-\infty, 1/2\,]\,\, \text{ and } \,\,\chi=0\,\, \text{ on } \,\,[1,+\infty[\,.$$
 We define the function  $$\chi_{1}(x)=\chi\bigg(  \dfrac{t(x)}{\delta}\bigg)\,.$$
 For $ k=1,...,n\,,\,\,$ we set
 $$v_{k,h}=\chi_{1}u_{k,h}\,.$$ 
 
 Let $F_{h}$ the vector subspace of $\mathrm{Dom}(\textit{$ \mathcal{L}^{b,\rho}_{h}$})$ spanned by the family $(v_{k,h})_{1\leq k \leq n}$
  and $w_{h} \in F_{h} $ which is written as follows:
  $$w_{h} =\sum_{k=1}^{n} \beta_{k} v_{k,h} \,.$$
  The functions $(v_{k,h})$ are almost orthonormal ( Proposition \ref{agmon}).
We note that dim$F_{h}=n$ and $$ \|w_{h}\|^{2}=\sum\limits_{k=1}^{n} |\beta_{k}|^{2}\,+\mathcal{O}(\exp(-\alpha h^{-\rho}\,))\,.$$
Inserting $w_{h}$ into the quadratic  form $\textit{$q^{b,\rho}_{h} $}(w_{h})$ 
 \begin{align*}
 \textit{$q^{b,\rho}_{h} $}(w_{h})=\langle \textit{$ \mathcal{L}^{b,\rho}_{h} $ }w_{h},w_{h} \rangle=\sum_{j,k=1}^{n}\beta_{j}\beta_{k} \langle \textit{$ \mathcal{L}^{b,\rho}_{h} $ }v_{j,h},v_{k,h} \rangle\,.
 \end{align*}
 For $j,k$ fixed and for $ h $ small enough, we have
 \begin{align*}
 \langle \textit{$ \mathcal{L}^{b,\rho}_{h} $ }v_{j,h},v_{k,h} \rangle &= \delta_{j,k}\,\mu_{j}(h,b)-h^{2}  \langle u_{j,h} \Delta \chi_{1}, \chi_{1} u_{k,h}  \rangle -2h^{2}\langle \nabla \chi_{1}.(\nabla -ib \textbf{$A_{0}$})u_{j,h} , \chi_{1} u_{k,h} \rangle\\&\,\,\,\,\,\,+\mathcal{O}(\exp(-\alpha h^{-\rho}\,))\,.
 \end{align*}
From Hölder's inequality and the Corollary \ref{C1}, we obtain
 $$h^{2}  \langle u_{j,h} \Delta \chi_{1}, \chi_{1} u_{k,h}  \rangle +2h^{2}\langle \nabla \chi_{1}.(\nabla -ib \textbf{$A_{0}$})u_{j,h} , \chi_{1} u_{k,h} \rangle = \mathcal{O}(\exp(-\alpha h^{-\rho}\,))\,.$$
Therefore,
 \begin{align*}
 \textit{$q^{b,\rho}_{h} $}(w_{h})&=\sum_{j=1}^{n}\mu_{j}(h,b)|\beta_{j}|^{2}+ \mathcal{O}(\exp(-\alpha h^{-\rho}\,)) \sum_{j,k=1}^{n}\beta_{j}\beta_{k} \\&\leq \bigg(\mu_{n}(h,b) + C\exp(-\alpha h^{-\rho}\,)\bigg) \|w_{h}\|^{2}\,.
 \end{align*}
By the min-max theorem, we have
 $\,\,\,\,\mu_{n}(h,b,\rho) \leq \mu_{n}(h,b)+C\exp(-\alpha h^{-\rho}\,)\,.$
\end{proof}
Proposition \ref{L2} leads us to replace the initial problem by a new Robin-Dirichlet in a $\delta$-neighborhood of the boundary $\Gamma\,.$
\subsection{The Robin Laplacian in boundary coordinates.}\label{tub}
By Lemma \ref{tutu}, the new quadratic form is expressed in tubular coordinates and is written as follows:
  \begin{align*}
 \textit{$\widetilde{q}^{\,b,\rho}_{h}$}(u)&=h^{2}\displaystyle{\int^{L}_{-L}\int^{\delta}_{0}}\,\bigg|\,\bigg(\partial_{s}-ib\beta_{0}-ib\Big(-t+\frac{t^{2}}{2}\kappa(s)\Big)\bigg)\tilde{u}\bigg|^{2}\,(1-t \kappa(s))^{-1}\,\mathrm{~d} t\mathrm{d} s\\& \,\,\,\,\,\,\,\,+h^{2}\displaystyle{\int^{L}_{-L}\int^{\delta}_{0}}|\partial_{t}\tilde{u}|^{2}\,(1-t \kappa(s))\,\mathrm{~d} t\mathrm{~d} s-h^{3/2}\displaystyle{\int^{L}_{-L}}|\tilde{u}(s,t=0)|^{2}\mathrm{~d} s\,,
 \end{align*}
 with $\tilde{u}= e^{i \varphi}u\circ \Phi$   and $L=\dfrac{|\partial\Omega|}{2}\,.$ 
The operator $\textit{ $ \mathcal{L}^{b,\rho}_{h} $}\,$is unitarily equivalent to $ \textit{ $ \widetilde{\mathcal{L}}^{b,\rho}_{h} $}$ the self-adjoint realization on $\textit{$L^{2}((-L,L]\times (0,\delta),a\mathrm{~d} t\mathrm{d} s)$}$, of the differential operator, with $2L-$periodic boundary condition with respect to $s,$
\begin{align*}
 \textit{ $ \widetilde{\mathcal{L}}^{b,\rho}_{h} $}&=-h^{2}a^{-1}\bigg(\partial_{s}-ib\beta_{0}-ib\Big(-t+\frac{t^{2}}{2}\kappa(s)\Big)\bigg)a^{-1}\bigg(\partial_{s}-ib\beta_{0}-ib\Big(-t+\frac{t^{2}}{2}\kappa(s)\Big)\bigg)\\&\,\,\,\,\, -h^{2}a^{-1}\partial_{t}a\partial_{t}\,\,\,\,\,\,\,\,\,\,\,\,  ( \text{in } (\textit{$L^{2}(a\,\mathrm{~d} t\mathrm{d} s))$}\,.
 \end{align*}
 With  $a=1-t \kappa(s)\,.$
In boundary coordinates, the Robin condition  becomes $$ \partial_{t}u(s,t=0)=-h^{-1/2}u(s,t=0)\,.$$
We introduce,
\begin{align*}
&
\,\widetilde{\Omega}_{\delta}=\lbrace (s,t)  :s\in [-L,L[\,\, \text{ and }  \,\,0< t < \delta\,\rbrace\,,\\& \mathrm{Dom}(\textit{$\widetilde{q}^{\,b,\rho}_{h}$})=\lbrace u \in \textit{$H^{1}(\widetilde{\Omega}_{\delta})$} \, :\, u(s,\delta)=0\rbrace\,,\\ &\mathrm{Dom}(\textit{$\widetilde{\mathcal{L}}^{b,\rho}_{h}$})=\, \lbrace u  \in  \textit{$H^{2}(\widetilde{\Omega}_{\delta})$} \,  :u(s,\delta)=0\, \text{ and }\, \partial_{t}u(s,t=0)=-h^{-1/2}u(s,t=0)  \rbrace \, .  
\end{align*}
We know that the eigenfunctions are localized near the boundary, at the scale $ h^{\frac {1}{2}}$ (cf. Prop \ref{agmon}) and in order to obtain a Robin condition independent of $h\,, $ we get a partially semiclassical problem by changing the variable 
$$(s,t) = (s,h^{\frac{1}{2}} \tau).$$
This change of variable transforms the above expression of $ \textit{ $ \tilde{\mathcal{L}}^{b,\rho}_{h} $}$ into an operator  $\textit{ $ \widetilde{\mathcal{L}}^{b,\rho}_{h} $}$ as follows

  \begin{align*}
 &h\bigg[-h\widehat{a}^{-1}\bigg(\partial_{s}-ib\beta_{0}-ib\Big(-h^{\frac{1}{2}} \tau+h\dfrac{\tau^{2}}{2}\kappa(s)\Big)\bigg)\widehat{a}^{-1}\bigg(\partial_{s}-ib\beta_{0}-ib\Big(-h^{\frac{1}{2}} \tau+h\dfrac{\tau^{2}}{2}\kappa(s)\Big)\bigg)\\&\,\,\,\,\,\,\,\,\,\, -\,\widehat{a}^{-1}\partial_{\tau}\widehat{a}\partial_{\tau}\bigg]\,,
 \end{align*}
 where the new weight
  $$\widehat{a}(s,\tau)=1-h^{\frac{1}{2}}\,\tau \kappa(s)\,.$$\\
  The boundary condition becomes
   $$\partial_{\tau}u(s,\tau=0)=-u(s,\tau=0)\,.$$ 
   We consider rather the operator
    $$\textit{$\widehat{\mathcal{L}}^{\,b,\rho}_{h} $}=h^{-1} \textit{$\widetilde{\mathcal{L}}^{\,b,\rho}_{h} $}\,,$$
     and the domain of integration becomes
     $$[-L,L[\times(0,\delta / h^{1/2})=[-L,L[\times(0,h^{-\rho})\,.$$
     The associated quadratic form is
     \begin{align*}
         \textit{$\widehat{q}^{\,b,\rho}_{h}$}(\psi)&=\displaystyle{\int^{L}_{-L}\int^{h^{-\rho}}_{0}}\,\bigg|\,\bigg(h^{1/2}\partial_{s}-ibh^{1/2}\beta_{0}-ibh\Big(-\tau+h^{1/2}\frac{\tau^{2}}{2}\kappa(s)\Big)\bigg)\psi\bigg|^{2}\,\hat{a}^{-1}\,\mathrm{~d} \tau \mathrm{~d} s\\&\quad+\displaystyle{\int^{L}_{-L}\int^{h^{-\rho}}_{0}}|\partial_{\tau}\psi|^{2}\,\hat{a}\,\mathrm{~d} \tau \mathrm{~d} s-\displaystyle{\int^{L}_{-L}}|\psi(s,\tau=0)|^{2}\mathrm{~d} s\,.
     \end{align*}
     We let
     \begin{equation}\label{ay}
\begin{aligned}
&
\,\widehat{\Omega}_{\delta}=\lbrace (s,\tau)  :s\in [-L,L[\,\, \text{ and }  \,\,0< \tau < h^{-\rho}\,\rbrace\,,
\\&\mathrm{Dom}(\textit{$\widehat{q}^{\,b,\rho}_{h}$})=\lbrace u \in \textit{$H^{1}(\widehat{\Omega}_{\delta})$} \, :\, u(s,h^{-\rho})=0\rbrace\,,\\ &\mathrm{Dom}(\textit{$\widehat{\mathcal{L}}^{b,\rho}_{h}$})=\, \lbrace u  \in  \textit{$H^{2}(\widehat{\Omega}_{\delta})$} \,  : u(s,h^{-\rho})=0 \, \text{ and }\, \partial_{\tau}u(s,\tau=0)=-u(s,\tau=0)  \rbrace \, .  
\end{aligned}
   \end{equation}
  Let  $\,\widehat{\mu}_{n}(h,b,\rho)$ be the $n$-th eigenvalue of the self-adjoint operator $\textit{$\widehat{\mathcal{L}}^{\,b,\rho}_{h} $}\,.$ 
 We have
 \begin{equation}
     \label{oloi}
     \widehat{\mu}_{n}(h,b,\rho)=h^{-1} \mu_{n}(h,b,\rho)\,.
 \end{equation}
  \section{Asymptotics of the eigenvalues of the Robin's Laplacian}
  \label{104}
The aim of this section is to prove the following result, which implies Theorem \ref{Q2}.
\begin{theorem}
 \label{R21}
  Let $n \in \mathbb{N}^{*},\,$ $ 0 \leq \eta <\frac{3}{2}  \,,$ $\alpha>0\,$ and $0< c_{1}<c_{2}$\,. There exist $c_{\pm}>0\,$ and $h_{0}\,>0,\,$ such that, if $h\in(0,h_{0})\,$
  and $b$ satisfies
 $$c_{1}h^{\frac{-\eta}{2}}\leq b \leq c_{2}h^{\frac{-\eta}{2}}\,, $$   then we have : 
$$\,\,\,\,\,\, \,\lambda_{n}(\mathcal{L}_{h,\beta_{0}}^{\mathrm{eff},-})+\mathcal{O}(h^{2-\alpha-\eta}) \leq\widehat{\mu}_{n}(h,b,\rho)\leq \,\lambda_{n}(\mathcal{L}_{h,\beta_{0}}^{\mathrm{eff},+})+\mathcal{O}(h^{2-\alpha-\eta})\,,$$
 with
$$\mathcal{L}_{h,\beta_{0}}^{\mathrm{eff},\pm}=-h(1\pm c_{\pm}\,h^{r})(\partial_{s}-ib\beta_{0})^{2}-1-\kappa(s) h^{\frac{1}{2}}-\frac{\kappa(s)^{2}}{2}h\,,$$
and $$r=\min(\alpha,\frac{1}{2})\,.$$
 \end{theorem}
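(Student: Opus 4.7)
The plan is to apply a Born--Oppenheimer reduction in the rescaled tube coordinates $(s,\tau)\in[-L,L)\times(0,h^{-\rho})$ from \eqref{ay}. At each fixed $s$, the transverse operator is close to the half-line Robin operator $-\partial_\tau^2$ on $L^2(\mathbb{R}_+)$ with boundary condition $v'(0)=-v(0)$, whose spectrum is $\{-1\}\cup[0,\infty)$ and whose $L^2$-normalized ground state is $v_0(\tau)=\sqrt{2}\,e^{-\tau}$; the Dirichlet truncation at $\tau=h^{-\rho}$ is an exponentially small perturbation. I would exploit this transverse spectral separation to reduce the two-dimensional problem to a one-dimensional problem on $\mathbb{R}/2L\mathbb{Z}$, picking up the constant $-1$ from the transverse ground-state energy, the potential corrections $-\kappa(s)h^{1/2}-\kappa(s)^2h/2$ from the expansion of the weight $\widehat a=1-h^{1/2}\tau\kappa(s)$ and from the magnetic cross terms, and the kinetic prefactor $h(1\pm c_\pm h^r)(\partial_s-ib\beta_0)^2$ with $r=\min(\alpha,1/2)$.

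For the \textbf{upper bound}, I would plug in tensor-product trial functions $\psi_j(s,\tau)=u_j(s)\,v_0(\tau)\,\chi(h^{\rho}\tau)$, where $u_1,\dots,u_n$ are the first $n$ eigenfunctions of $\mathcal{L}_{h,\beta_0}^{\mathrm{eff},+}$ and $\chi$ is a smooth cut-off enforcing the Dirichlet condition at $\tau=h^{-\rho}$ (the exponential decay of $v_0$ making the cut-off error $\mathcal{O}(e^{-h^{-\rho}})$). A direct computation of $\widehat q_h^{\,b,\rho}(\psi_j)$ using the closed-form moments
\[
\int_0^\infty v_0^2\,d\tau=1,\quad \int_0^\infty \tau v_0^2\,d\tau=\int_0^\infty\tau^2 v_0^2\,d\tau=\int_0^\infty \tau (v_0')^2\,d\tau=\tfrac12,\quad \int_0^\infty (v_0')^2\,d\tau=1,\quad v_0(0)^2=2,
\]
and dividing by the weighted norm $\|\psi_j\|^2=\int|u_j|^2\,ds\,(1-\tfrac12 h^{1/2}\kappa)+\mathcal{O}(h)$ reproduces exactly $\langle\mathcal{L}_{h,\beta_0}^{\mathrm{eff},+}u_j,u_j\rangle$ modulo the announced remainder, and the min--max principle concludes. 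For the \textbf{lower bound}, I would decompose any $\psi$ in the form domain as $\psi=f(s)v_0(\tau)+\psi_\perp(s,\tau)$ with $\psi_\perp\perp v_0$ in $L^2_{d\tau}$ pointwise in $s$. The transverse form on $v_0^\perp$ is non-negative since the continuous spectrum of the half-line operator starts at $0$, and the Dirichlet truncation adds a coercivity of order $h^{2\rho}$; a Feshbach--Schur argument then shows that on the spectral subspace of $\widehat{\mathcal L}_h^{b,\rho}$ associated with the first $n$ eigenvalues, the component $\psi_\perp$ is small and the map $\psi\mapsto f$ is injective. Substituting $\psi=fv_0+\psi_\perp$ into $\widehat q_h^{\,b,\rho}$ and absorbing the cross couplings via Cauchy--Schwarz with parameter $h^r$ produces the $(1-c_-h^r)$ prefactor and reduces the form, up to $\mathcal{O}(h^{2-\alpha-\eta})$, to $\langle\mathcal{L}_{h,\beta_0}^{\mathrm{eff},-}f,f\rangle$; min--max on the $n$-dimensional space of projections $f$ then yields the claimed lower bound.

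The hardest step is the bookkeeping of the magnetic cross-terms. Expanding the square $|h^{1/2}(\partial_s-ib\beta_0)\psi + ibh^{1/2}\tau\psi - \tfrac12 ibh\tau^2\kappa\psi|^2$ produces couplings $2h^{1/2}\mathrm{Re}\langle(\partial_s-ib\beta_0)\psi,\,b\tau\psi-\tfrac12 bh^{1/2}\tau^2\kappa\psi\rangle$ together with the weighted moments $b^2h\int\tau^2|\psi|^2$ and $b^2h^2\int\tau^4|\psi|^2$. Under the hypothesis $b=\mathcal{O}(h^{-\eta/2})$ and using the $\tau$-moment bounds inherited from the Agmon decay of $v_0$, these are of sizes $\mathcal{O}(h^{1-\eta})$ and $\mathcal{O}(h^{2-\eta})$ respectively. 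Applying Cauchy--Schwarz with the parameter $h^r=h^{\min(\alpha,1/2)}$ absorbs them into the $(1\pm c_\pm h^r)$ prefactors while producing the final remainder $\mathcal{O}(h^{2-\alpha-\eta})$, which is meaningful exactly in the announced regime $\eta<3/2$, consistent with the theorem's hypothesis.
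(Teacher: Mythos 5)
Your overall architecture is the same as the paper's (a Born--Oppenheimer reduction in the rescaled tube, tensor trial states for the upper bound, an orthogonal decomposition plus parametrized Cauchy--Schwarz for the lower bound), but there is one structural choice that creates a genuine gap. The paper projects onto the $s$-dependent ground state $v_{\kappa(s),h}$ of the \emph{weighted} transverse operator $\mathcal{H}_{\kappa(s),h}=\mathcal{H}_{B}^{\{T\}}$ with $B=h^{1/2}\kappa(s)$ (Section \ref{s13}); the whole potential $-1-\kappa(s)h^{1/2}-\tfrac{\kappa(s)^2}{2}h$ is then read off from the known expansion \eqref{4.4} of $\lambda_{1}(\mathcal{H}_{\kappa(s),h})$, the transverse form splits \emph{exactly} as $q(\Pi_s\psi)+q(\Pi_s^{\perp}\psi)$ with no cross term, and the price paid is the Born--Oppenheimer correction $\|\partial_s v_{\kappa(s),h}\|^2=\mathcal{O}(h)$, controlled by \eqref{RO} and Lemma \ref{L1}. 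You instead project onto the flat, $s$-independent mode $v_0(\tau)=\sqrt2\,e^{-\tau}$. For the upper bound this is fine: the Rayleigh quotient of the weighted transverse form at $v_0$ does reproduce $-1-\kappa h^{1/2}-\tfrac{\kappa^2}{2}h+\mathcal{O}(h^{3/2})$ once you divide by the weighted norm $1-\tfrac12 h^{1/2}\kappa$ (note, though, that the $-\tfrac{\kappa^2}{2}h$ term comes from this normalization, not from the magnetic cross terms as your second paragraph suggests).

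The gap is in the lower bound. Since $v_0$ is not an eigenfunction of the weighted transverse operator, the decomposition $\psi=f v_0+\psi_\perp$ with $\psi_\perp\perp v_0$ in the flat $L^2(d\tau)$ does \emph{not} diagonalize the transverse form: a computation gives $q^{\,tang}(fv_0,\psi_\perp)=B\bar f\int(\tau-1)v_0\overline{\psi_\perp}\,d\tau=\mathcal{O}(h^{1/2})\,|f|\,\|\psi_\perp\|$ (and the weighted norm $\|\psi\|^2_{L^2(\hat a\,d\tau)}$ also fails to split, with an $\mathcal{O}(h^{1/2})$ cross term). Absorbing $-\delta\|\psi_\perp\|^2-\delta^{-1}h|f|^2$ requires $\delta\lesssim1$ because the transverse coercivity on $\psi_\perp$ relative to energies near $-1$ is only $\mathcal{O}(1)$, so you are left with an uncontrolled $\mathcal{O}(h)|f|^2$ error --- exactly the order of the $-\tfrac{\kappa(s)^2}{2}h$ term the theorem asserts, and larger than the claimed remainder $\mathcal{O}(h^{2-\alpha-\eta})$ whenever $\alpha+\eta<1$ (this is not cosmetic: the disc result, Theorem \ref{86a}, lives at precisely that order). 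The a priori smallness of $\psi_\perp$ from a Feshbach--Schur step is typically only $\|\psi_\perp\|=\mathcal{O}(h^{1/2})\|f\|$, which still yields an $\mathcal{O}(h)|f|^2$ cross term. To close the argument you either need the paper's exact projector $\Pi_s$ (cross term identically zero, Born--Oppenheimer correction harmless), or at least orthogonality in the weighted inner product $L^2(\hat a\,d\tau)$, for which the cross term improves to $\mathcal{O}(B^2)=\mathcal{O}(h)$ per unit norm and the scheme goes through. Separately, in your bookkeeping of the magnetic terms the $\tau$-linear correction is $ibh\tau\psi$, not $ibh^{1/2}\tau\psi$; with the correct power the squared term is $b^2h^2\int\tau^2|\psi|^2=\mathcal{O}(h^{2-\eta})$ and the Cauchy--Schwarz split with parameter $h^{\alpha}$ gives $\mathcal{O}(h^{2-\alpha-\eta})$ as claimed, whereas the sizes you quote ($\mathcal{O}(h^{1-\eta})$) would not be absorbable for $\eta\ge1$.
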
 
%\begin{remrak}
 %We distinguish two important cases of $\eta \,:$
   %  \item  \quad \quad \quad - If $\,0\leq\eta<\frac{5}{4}$, we choose $\alpha=\frac{1}{2}(\frac{5}{4}-\eta)\,$ then $,\,\mathcal{O}(h^{2-\alpha-\eta})=o(h^{\frac{3}{4}})\,.$
   %  \item  \quad \quad \quad - If $\,\frac{5}{4} \leq\eta<\frac{3}{2}$, we choose $\alpha=\frac{1}{2}(\frac{3}{2}-\eta)\,$ then $,\,\mathcal{O}(h^{2-\alpha-\eta})=o(h^{\frac{1}{2}})\,.$
% \end{remrak}
\subsection{The Feshbach projection}\label{s13}
Following the writing of the magnetic Robin Laplacian in tubular coordinates in Section \ref{tub}, there appears a one-dimensional operator defined, for $s$ fixed, by\\
$$   \begin{cases}
-\,\tilde{a}^{-1}\partial_{\tau}\tilde{a}\partial_{\tau}=-\partial_{\tau}^{2}+h^{1/2} \kappa(s)\tilde{a}^{-1}\partial_{\tau}\\
\partial_{\tau}u(s,\tau=0)=-u(s,\tau=0)\, \text{ in }\textit{$L^{2}((0,h^{-\rho}),(1-\tau h^{1/2}\kappa(s))\,\mathrm{~d} \tau)$}\,.
\end{cases}$$\\
This operator is denoted by
 $$\mathcal{H}_{\kappa(s),h}=\mathcal{H}_{B}^{\lbrace T \rbrace} \,,$$
with $$T=h^{-\rho}\,\,\text{ and }\,\,B=h^{1/2}\kappa(s) \,, $$
and where $\mathcal{H}_{B}^{\lbrace T \rbrace}$ is defined in \ref{d1}.
Let $ v_{\kappa(s),h} $ be a normalized groundstate of  $\mathcal{H}_{\kappa(s),h}$ associated with $\lambda_{1}(\mathcal{H}_{\kappa(s),h})\,.$
 By \cite[Lemma 2.5]{2}, we have 
\begin{equation}\label{4.4}
\lambda_{1}(\mathcal{H}_{\kappa(s),h})=-1-\kappa(s) h^{\frac{1}{2}}-\frac{\kappa(s)^{2}}{2}h+o(h)\,,
\end{equation}
and for $C>0$
 \begin{equation}
     \label{&é}
     \,\,\lambda_{2}(\mathcal{H}_{\kappa(s),h})\geq - Ch^{1/2-\rho}\,.
 \end{equation}
According to Lemma \ref{L)} and Proposition \ref{P1}, there exist constants $C$ and $C_{k}\,,$ such that
\begin{equation}
    \label{RO}
    \|\partial_{s}v_{\kappa(s),h} \|_{\textit{$L^{2}((0,h^{-\rho})$},(1-h^{1/2}\kappa(s)\tau) \mathrm{~d} \tau)}\leq C h^{1/2}\,,
\end{equation}
and
\begin{equation}
    \label{L5}
    \displaystyle{\int_{0}^{h^{-\rho}}} \tau^{k}|v_{\kappa(s),h}|^{2}(1-h^{1/2}\kappa(s)\tau)\mathrm{~d} \tau\leq C_{k}\,.
\end{equation}
We introduce for $s\in[-L,L]$, the Feshbach projection $\Pi_{s}$ on the normalized ground state $v_{\kappa(s),h}$ of $\mathcal{H}_{\kappa(s),h}$
   \begin{equation}
       \label{ZE}
       \Pi_{s}\psi=\langle\psi,v_{\kappa(s),h}\rangle_{\textit{$L^{2}((0,h^{-\rho}),\hat{a}\,\mathrm{~d} \tau)$}} v_{\kappa(s),h}\,.
   \end{equation}
We also let
$$\Pi^{\perp}_{s}=\text{Id}-\Pi_{s}\,,$$
and
$$R_{h}(s)=\|\partial_{s}v_{\kappa(s),h}\|^{2}_{\textit{$L^{2}((0,h^{-\rho}),\hat{a}\,\mathrm{~d} \tau)$}}\,.$$
The quantity $R_{h}$ is sometimes called “Born-Oppenheimer correction”.
\begin{lemma}
\label{L1}
There exist constants $C>0$ and $h_{0}>0$ such that, for all $\psi \in\mathrm{Dom}({\textit{$\widehat{q}^{\,b,\rho}_{h}$}})$ and $h\in(0,h_{0})\,,$ we have:
\begin{align*}
\|[\Pi_{s},\partial_{s}]\psi\|_{\textit{$L^{2}(\hat{a}\mathrm{~d} \tau\mathrm{~d} s)$}}=\|[\Pi^{\perp}_{s},\partial_{s}]\psi\|_{\textit{$L^{2}(\hat{a}\mathrm{~d}\tau\mathrm{~d}s)$}}&\leq \displaystyle\int_{-L}^{L}\bigg[2 R_{h}(s)^{\frac{1}{2}}+ch^{1/2} \bigg]\|\psi\|_{\textit{$L^{2}(\hat{a}\mathrm{~d} \tau)$}}\mathrm{~d} s\,.
\end{align*}
\end{lemma}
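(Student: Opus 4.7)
The identity $\Pi_s+\Pi_s^\perp=\mathrm{Id}$ gives $[\Pi_s,\partial_s]+[\Pi_s^\perp,\partial_s]=0$, so the two $L^2(\hat a\,\mathrm d\tau\,\mathrm ds)$-norms on the left are equal. It remains to bound the first one. The plan is to obtain a pointwise-in-$s$ estimate of $\|[\Pi_s,\partial_s]\psi\|_{L^2(\hat a\,\mathrm d\tau)}$ by the bracket on the right, and then integrate in $s$.

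Starting from the definition \eqref{ZE} of $\Pi_s$, I would differentiate
\[
\Pi_s\psi=\Big(\int_0^{h^{-\rho}}\psi(s,\tau)\,\overline{v_{\kappa(s),h}(\tau)}\,\hat a(s,\tau)\,\mathrm d\tau\Big)\,v_{\kappa(s),h}
\]
in $s$ by the product rule, carefully tracking that both the eigenfunction $v_{\kappa(s),h}$ and the weight $\hat a(s,\tau)=1-h^{1/2}\tau\kappa(s)$ depend on $s$. The term coming from $\partial_s\psi$ in the integrand matches exactly $\Pi_s(\partial_s\psi)$ and cancels, and one is left with
\[
[\Pi_s,\partial_s]\psi=-\langle\psi,\partial_s v_{\kappa(s),h}\rangle_{L^2(\hat a\,\mathrm d\tau)}\,v_{\kappa(s),h}-\langle\psi,v_{\kappa(s),h}\rangle_{L^2(\hat a\,\mathrm d\tau)}\,\partial_s v_{\kappa(s),h}-\Big(\int\psi\,\overline{v_{\kappa(s),h}}\,\partial_s\hat a\,\mathrm d\tau\Big)v_{\kappa(s),h}.
\]

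Now I would estimate each of the three terms in the $L^2(\hat a\,\mathrm d\tau)$-norm at fixed $s$. The first two use only Cauchy--Schwarz together with $\|v_{\kappa(s),h}\|_{L^2(\hat a\,\mathrm d\tau)}=1$ and the definition $R_h(s)=\|\partial_s v_{\kappa(s),h}\|_{L^2(\hat a\,\mathrm d\tau)}^2$: each is bounded by $R_h(s)^{1/2}\|\psi\|_{L^2(\hat a\,\mathrm d\tau)}$, contributing the $2R_h(s)^{1/2}$ in the final estimate. For the third term, the key identity is $\partial_s\hat a(s,\tau)=-h^{1/2}\tau\,\kappa'(s)$, which pulls out a factor $h^{1/2}|\kappa'(s)|$; since $\hat a\ge 1/2$ once $h$ is small enough (because $h^{1/2-\rho}\to 0$), replacing $\mathrm d\tau$ by $\hat a^{-1}\hat a\,\mathrm d\tau$ and applying Cauchy--Schwarz reduces this term to $h^{1/2}|\kappa'(s)|\,\|\psi\|_{L^2(\hat a\,\mathrm d\tau)}\,\|\tau\,v_{\kappa(s),h}\|_{L^2(\hat a\,\mathrm d\tau)}$. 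The moment bound \eqref{L5} with $k=2$ controls the last factor by a constant, and the smoothness of $\kappa$ on $\partial\Omega$ makes $\kappa'$ uniformly bounded, yielding a contribution $Ch^{1/2}\|\psi\|_{L^2(\hat a\,\mathrm d\tau)}$.

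Collecting these three bounds gives the pointwise inequality
\[
\|[\Pi_s,\partial_s]\psi\|_{L^2(\hat a\,\mathrm d\tau)}\le\bigl(2R_h(s)^{1/2}+ch^{1/2}\bigr)\,\|\psi\|_{L^2(\hat a\,\mathrm d\tau)},
\]
and integrating in $s\in[-L,L]$ produces the stated inequality. The routine but delicate point is the $\partial_s\hat a$ term: it is the only source of an error not already encoded in $R_h(s)$, and handling it cleanly requires invoking the weighted moment bound \eqref{L5} rather than a crude $L^\infty$ bound on $v_{\kappa(s),h}$, because on the stretched interval $(0,h^{-\rho})$ a factor of $\tau$ is unbounded. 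A secondary technical point, which I would treat as standard, is the differentiability of $s\mapsto v_{\kappa(s),h}$ in $L^2(\hat a\,\mathrm d\tau)$; this is guaranteed by the smoothness of $\kappa$ and the spectral gap \eqref{&é} between $\lambda_1(\mathcal H_{\kappa(s),h})$ and $\lambda_2(\mathcal H_{\kappa(s),h})$.
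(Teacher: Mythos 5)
Your proposal is correct and follows essentially the same route as the paper: expand $[\Pi_s,\partial_s]\psi$ via the product rule (the $\Pi_s\partial_s\psi$ term cancels), obtaining the two terms involving $\partial_s v_{\kappa(s),h}$ plus the extra term $h^{1/2}\kappa'(s)\bigl(\int_0^{h^{-\rho}}\psi\, v_{\kappa(s),h}\,\tau\,\mathrm{d}\tau\bigr)v_{\kappa(s),h}$ coming from $\partial_s\hat a$, then apply Cauchy--Schwarz together with the moment bound \eqref{L5}. Your identification of the weight-derivative term as the source of the $ch^{1/2}$ contribution, and of \eqref{L5} as the tool to control it, matches the paper exactly.
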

\begin{proof}
We estimate the commutator:
\begin{align*}
[\Pi_{s},\partial_{s}]\psi&=-[\Pi^{\perp}_{s},\partial_{s}]\psi\\&=\langle \partial_{s}\psi,v_{\kappa(s),h}\,\rangle_{\textit{$L^{2}(\hat{a}\mathrm{~d}\tau )$}}\, v_{\kappa(s),h}-\partial_{s}\bigg(\langle \psi,v_{\kappa(s),h}\,\rangle_{\textit{$L^{2}(\hat{a}\mathrm{~d} \tau)$}}\, v_{\kappa(s),h}\bigg)
\\&=-\langle \psi,\partial_{s}v_{\kappa(s),h} \rangle_{\textit{$L^{2}(\hat{a}\mathrm{~d} \tau)$}}\, v_{\kappa(s),h}-\langle \psi,v_{\kappa(s),h} \rangle_{\textit{$L^{2}(\tilde{a}\mathrm{~d} \tau)$}}\,\partial_{s} v_{\kappa(s),h}\\&\,\,\,\,\,\,\,\,\,\,\,\,\,\,\,\,\,\,\,\,\,\,\,\,\,\,\,\,\,\,\,\,\,\,\,\,\,\,\,\,\,\,\,\,\,\,\,\,\,\,\,\,\,\,\,\,\,\,\,\,\,\,\,\,\,\,\,\,\,\,\,\,\,\,\,\,\,\,\,\,+h^{1/2}\kappa^{'}(s)\bigg( \displaystyle\int_{0}^{h^{-\rho}}\psi v_{\kappa(s),h} \tau \mathrm{~d} \tau\bigg)v_{\kappa(s),h}\,.
\end{align*}
We get, thanks to the Cauchy-Schwarz inequality and the inequality \eqref{L5}
\begin{align*}
\|[\Pi_{s},\partial_{s}]\psi\|_{\textit{$L^{2}(\hat{a}\mathrm{~d} \tau\mathrm{~d} s)$}}&\leq \displaystyle\int_{-L}^{L}\bigg[2 R_{h}(s)^{\frac{1}{2}}+ch^{1/2} \bigg]\|\psi\|_{\textit{$L^{2}(\hat{a}\mathrm{~d} \tau)$}}\mathrm{~d} s\,.
\end{align*}
\end{proof}
\subsection{Approximation of the norm on the weighted space}
In this section, we introduce an approximation of the weight.
\begin{lemma}
\label{A}
There exist constants $c>0$ and $h_{0}>0$ such that, for all $\psi \in  \textit{$L^{2}(\hat{a}\mathrm{~d} \tau \mathrm{d} s)$} $ and $h\in(0,h_{0})\,,$ we have:
\begin{equation}
    \label{z1}
    \|\psi\|^{2}_{\textit{$L^{2}(\hat{a}^{-1}\mathrm{~d} \tau \mathrm{d} s)$}}  \leq (1+ch^{1/2}\,)\,\|\Pi _{s}\psi\|^{2}_{\textit{$L^{2}(\hat{a}\mathrm{~d} \tau \mathrm{d} s)$}}+(1+ch^{1/4}\,)\,\|\Pi^{\perp}_{s}\psi\|^{2}_{\textit{$L^{2}(\hat{a}\mathrm{~d} \tau \mathrm{d} s)$}}\,,
\end{equation}
and
\begin{equation}
    \label{A2}
    \|\psi\|^{2}_{\textit{$L^{2}(\hat{a}^{-1}\mathrm{~d} \tau \mathrm{d} s)$}}\geq (1-ch^{1/2}\,)\,\|\Pi \psi\|^{2}_{\textit{$L^{2}(\hat{a}\mathrm{~d} \tau \mathrm{d} s)$}}+(1-ch^{1/4}\,)\,\|\Pi^{\perp}_{s}\psi\|^{2}_{\textit{$L^{2}(\hat{a}\mathrm{~d} \tau \mathrm{d} s)$}}\,.
\end{equation}
\end{lemma}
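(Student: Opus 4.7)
The strategy is to expand both sides of \eqref{z1}--\eqref{A2} using the Feshbach decomposition $\psi=\Pi_s\psi+\Pi_s^\perp\psi$, which is orthogonal in $L^{2}(\hat a\,\mathrm d\tau)$ but \emph{not} in $L^{2}(\hat a^{-1}\,\mathrm d\tau)$, and to quantify the mismatch through the identity
\[
\hat a^{-1}-\hat a=\frac{2h^{1/2}\kappa(s)\tau-h\kappa(s)^{2}\tau^{2}}{\hat a},
\]
which is $\mathcal O(h^{1/2}\tau)$ uniformly on $[-L,L]\times(0,h^{-\rho})$ once $h$ is small enough so that $\hat a\geq 1/2$. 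The factor $\tau$ appearing here is either integrated against the ground state $v_{\kappa(s),h}$, in which case the moment bound \eqref{L5} converts it into an $\mathcal O(1)$, or it is estimated crudely via $\tau\leq h^{-\rho}$, which only consumes the smallness down to $h^{1/2-\rho}$. This asymmetry is exactly the origin of the two different exponents $h^{1/2}$ and $h^{1/4}$ in the statement.

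Writing $\Pi_s\psi=\phi(s)\,v_{\kappa(s),h}(\tau)$ with $\phi(s)=\langle\psi,v_{\kappa(s),h}\rangle_{L^{2}(\hat a\,\mathrm d\tau)}$, the $\hat a$-orthogonality kills the $\hat a$-part of the cross term and leaves
\begin{align*}
\|\psi\|^{2}_{L^{2}(\hat a^{-1}\mathrm d\tau\mathrm ds)}
&=\|\Pi_s\psi\|^{2}_{L^{2}(\hat a\mathrm d\tau\mathrm ds)}+\|\Pi_s^{\perp}\psi\|^{2}_{L^{2}(\hat a\mathrm d\tau\mathrm ds)}\\
&\quad+\iint|\Pi_s\psi|^{2}(\hat a^{-1}-\hat a)\,\mathrm d\tau\mathrm ds+\iint|\Pi_s^{\perp}\psi|^{2}(\hat a^{-1}-\hat a)\,\mathrm d\tau\mathrm ds\\
&\quad+2\,\mathrm{Re}\iint\Pi_s\psi\,\overline{\Pi_s^{\perp}\psi}\,(\hat a^{-1}-\hat a)\,\mathrm d\tau\mathrm ds.
\end{align*}
The first remainder is controlled by $Ch^{1/2}|\phi(s)|^{2}=Ch^{1/2}\|\Pi_s\psi\|^{2}_{L^{2}(\hat a\mathrm d\tau)}$ using \eqref{L5} with $k=1,2$, which explains the factor $1+ch^{1/2}$ in front of $\|\Pi_s\psi\|^{2}$. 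The second remainder is only bounded by $Ch^{1/2-\rho}\|\Pi_s^{\perp}\psi\|^{2}_{L^{2}(\hat a\mathrm d\tau)}$, matching the weaker factor $1+ch^{1/4}$ provided the parameter $\rho$ from Section \ref{103} is taken $\leq 1/4$ (admissible since $\rho\in(0,1/2)$).

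The main difficulty is the cross term, which does not vanish. I would estimate it by Cauchy--Schwarz with the weight $\tau$: the $v_{\kappa(s),h}$-side then contributes $\bigl(\int\tau^{2}|v_{\kappa(s),h}|^{2}\,\mathrm d\tau\bigr)^{1/2}=\mathcal O(1)$ by \eqref{L5}, leaving $Ch^{1/2}|\phi(s)|\,\|\Pi_s^{\perp}\psi\|_{L^{2}(\mathrm d\tau)}$. A plain AM--GM step redistributes this as $\tfrac{1}{2}Ch^{1/2}\bigl(|\phi(s)|^{2}+\|\Pi_s^{\perp}\psi\|^{2}_{L^{2}(\hat a\mathrm d\tau)}\bigr)$, which is absorbed into the two existing constants without damaging either exponent. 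The lower bound \eqref{A2} follows from exactly the same decomposition, simply tracking the signs of the three remainder terms, each of which can be bounded from below by the same quantities.
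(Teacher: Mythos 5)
Your proof is correct and follows essentially the same route as the paper: both exploit the $\hat a$-orthogonality of the Feshbach decomposition, bound $|\hat a^{-1}-\hat a|$ by $Ch^{1/2}\tau$, and then treat the two components asymmetrically, using the moment bound \eqref{L5} for the $\Pi_{s}$-part and the crude bound $\tau\leq h^{-\rho}$ (with $\rho\leq 1/4$) for the $\Pi_{s}^{\perp}$-part. Your explicit Cauchy--Schwarz/AM--GM treatment of the cross term is in fact slightly more careful than the paper, which absorbs that term into the constant of an elementary inequality when splitting $\int h^{1/2}\tau|\psi|^{2}\hat a\,\mathrm{d}\tau\,\mathrm{d}s$ into the two projected pieces.
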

\begin{proof}
We have
\begin{align*}
\bigg|\displaystyle{\int^{L}_{-L}}{\int^{h^{-\rho}}_{0}}|\psi|^{2}\hat{a}\mathrm{~d} \tau \mathrm{d} s -\displaystyle{\int^{L}_{-L}}{\int^{h^{-\rho}}_{0}}|\psi|^{2}\hat{a}^{-1}\mathrm{~d} \tau \mathrm{d} s\bigg| \leq C \displaystyle{\int^{L}_{-L}}{\int^{h^{-\rho}}_{0}}h^{1/2} \tau|\psi|^{2}\hat{a}\mathrm{~d} \tau \mathrm{d} s\,.
\end{align*}
Then, we use an orthogonal decomposition to get
\begin{align*}
&\bigg|\displaystyle{\int^{L}_{-L}}{\int^{h^{-\rho}}_{0}}|\psi|^{2}\hat{a}\mathrm{~d} \tau \mathrm{d} s-\displaystyle{\int^{L}_{-L}}{\int^{h^{-\rho}}_{0}}|\psi|^{2}\hat{a}^{-1}\mathrm{~d} \tau \mathrm{d} s\bigg|\\ & \leq  C \bigg[\displaystyle{\int^{L}_{-L}}{\int^{h^{-\rho}}_{0}}h^{1/2}\tau|\Pi_{s}\psi|^{2}\hat{a}\mathrm{~d} \tau \mathrm{d} s+ \displaystyle{\int^{L}_{-L}}{\int^{h^{-\rho}}_{0}}h^{1/2} \tau|\Pi^{\perp}_{s}\psi|^{2}\hat{a}\mathrm{~d} \tau \mathrm{d} s\bigg] \,.
\end{align*}
Thanks to \eqref{L5}, we get
$$\displaystyle{\int^{L}_{-L}}{\int^{h^{-\rho}}_{0}}h^{1/2} \tau|\Pi_{s}\psi|^{2}\hat{a}\mathrm{~d} \tau \mathrm{d} s =h^{1/2} \displaystyle{\int^{L}_{-L}}|\varphi(s)|^{2}\bigg[{\int^{h^{-\rho}}_{0}}\tau|v_{\kappa(s),h}|^{2}\hat{a}\mathrm{~d} \tau\bigg] \mathrm{d} s\leq  c \,h^{1/2} \,\|\Pi_{s} \psi\|^{2}_{\textit{$L^{2}(\hat{a}\mathrm{~d} \tau \mathrm{d} s)$}}\,.$$
We use that $h^{1/2-\rho} \leq h^{1/4}$, we obtain:
$$\displaystyle{\int^{L}_{-L}}{\int^{h^{-\rho}}_{0}}h^{1/2} \tau|\Pi^{\perp}_{s}\psi|^{2}\hat{a}\mathrm{~d} \tau \mathrm{d} s \leq h^{1/2-\rho} \displaystyle{\int^{L}_{-L}}{\int^{h^{-\rho}}_{0}} |\Pi^{\perp}_{s}\psi|^{2}\hat{a}\mathrm{~d} \tau \mathrm{d} s\leq h^{1/4}\,\|\Pi^{\perp} _{s}\psi\|^{2}_{\textit{$L^{2}(\hat{a}\mathrm{~d} \tau \mathrm{d} s)$}}\,. $$ 
\end{proof}
The following corollary is a direct consequence of \eqref{z1}.
 \begin{corollary}
\label{AZ} 
 There exist constants $c>0$ and $h_{0}>0$ such that, for all $\psi \in  \textit{$L^{2}(\hat{a}\mathrm{~d} \tau \mathrm{d} s)$} $ and $h\in(0,h_{0})\,,$ we have:
$$\|\Pi_{s}\psi\|^{2}_{\textit{$L^{2}(\hat{a}^{-1}\mathrm{~d} \tau \mathrm{d} s)$}}\leq (1+ch^{1/2}\,)\,\displaystyle{\int^{L}_{-L}}|\varphi(s)|^{2}\mathrm{~d} s\,,$$
 with $\varphi=\langle\psi,v_{\kappa(s),h}\rangle_{\textit{$L^{2}((0,h^{-\rho}),\hat{a}\mathrm{~d} \tau)$}}\,.$
\end{corollary}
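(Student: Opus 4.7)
The plan is to specialize the upper bound \eqref{z1} of Lemma~\ref{A} to a function that already lies in the range of $\Pi_s$. Since, for each fixed $s$, $\Pi_s$ is the orthogonal projection onto $\mathrm{span}(v_{\kappa(s),h})$ inside $L^{2}((0,h^{-\rho}),\hat a\,d\tau)$, one has $\Pi_{s}(\Pi_{s}\psi)=\Pi_{s}\psi$ and $\Pi_{s}^{\perp}(\Pi_{s}\psi)=0$. Applying \eqref{z1} with $\Pi_{s}\psi$ in place of $\psi$ will thus make the second term on the right-hand side vanish, which is the key observation.

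First I would compute $\|\Pi_{s}\psi\|^{2}_{L^{2}(\hat a\,d\tau\,ds)}$ by unfolding the definition $\Pi_{s}\psi(s,\tau)=\varphi(s)\,v_{\kappa(s),h}(\tau)$ and using that $v_{\kappa(s),h}$ is normalized in $L^{2}((0,h^{-\rho}),\hat a\,d\tau)$ for every $s$. A Fubini-type computation yields
$$\|\Pi_{s}\psi\|^{2}_{L^{2}(\hat a\,d\tau\,ds)}=\int_{-L}^{L}|\varphi(s)|^{2}\left(\int_{0}^{h^{-\rho}}|v_{\kappa(s),h}|^{2}\hat a\,d\tau\right)ds=\int_{-L}^{L}|\varphi(s)|^{2}\,ds.$$

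Then, substituting $\Pi_{s}\psi$ into \eqref{z1} and dropping the vanishing orthogonal-component term, I obtain
$$\|\Pi_{s}\psi\|^{2}_{L^{2}(\hat a^{-1}\,d\tau\,ds)}\leq(1+ch^{1/2})\,\|\Pi_{s}\psi\|^{2}_{L^{2}(\hat a\,d\tau\,ds)}=(1+ch^{1/2})\int_{-L}^{L}|\varphi(s)|^{2}\,ds,$$
which is precisely the conclusion of Corollary~\ref{AZ}.

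I do not expect any genuine obstacle here: the corollary is essentially a one-line specialization of Lemma~\ref{A} once one notices that $\Pi_{s}^{\perp}$ annihilates $\Pi_{s}\psi$. As a self-contained alternative avoiding the reinvocation of Lemma~\ref{A}, one could instead exploit the pointwise identity $\hat a^{-1}-\hat a=\dfrac{h^{1/2}\tau\kappa(s)(2-h^{1/2}\tau\kappa(s))}{\hat a}$, bound it by $Ch^{1/2}\tau$ on $(0,h^{-\rho})$ (valid since $\hat a$ stays bounded away from zero for $h$ small enough), and apply the first-moment estimate \eqref{L5} with $k=1$ to $|v_{\kappa(s),h}|^{2}$; this gives $\int_{0}^{h^{-\rho}}|v_{\kappa(s),h}|^{2}\hat a^{-1}\,d\tau\leq 1+Ch^{1/2}$, and the statement follows upon multiplication by $|\varphi(s)|^{2}$ and integration in $s$.
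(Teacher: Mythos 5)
Your proof is correct and follows exactly the route the paper intends: the paper gives no argument beyond stating that the corollary is ``a direct consequence of \eqref{z1}'', and your specialization of \eqref{z1} to $\Pi_{s}\psi$ (using $\Pi_{s}^{2}=\Pi_{s}$, $\Pi_{s}^{\perp}\Pi_{s}=0$, and the normalization $\int_{0}^{h^{-\rho}}|v_{\kappa(s),h}|^{2}\hat a\,\mathrm{d}\tau=1$) is precisely that direct consequence. Your self-contained alternative via \eqref{L5} with $k=1$ is also valid and is essentially the computation already carried out inside the proof of Lemma~\ref{A}.
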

\subsection{Upper bound.}\label{M1}
To separate the variables, we consider the function of the form :
\begin{equation}
    \label{kiki}
     \psi(s,\tau)=\varphi(s)\cdot v_{\kappa(s),h}(\tau)\,, 
\end{equation}
   with $\varphi \in \textit{$H^{1}(\mathbb{R}/2L\mathbb{Z})$}\,.$
   \\
   
   The following proposition provides an upper bound of the quadratic form on a subspace.
  \begin{proposition}
  \label{A1}
    Let  $\rho \in (0,1/4)\,,$ $ 0 \leq \eta <\frac{3}{2}  \,,$ $\alpha>0\,$ and $0< c_{1}<c_{2}$\, . There exist constants $c>0\,,$  $h_{0}\,>0\,$ such that, for all $h\in(0,h_{0})\,$ and $b$ satisfying 
 $$c_{1}h^{\frac{-\eta}{2}}\leq b \leq c_{2}h^{\frac{-\eta}{2}}\,, $$ we have for all $\psi$ define in \eqref{kiki}
       $$ \textit{$\widehat{q}^{\,b,\rho}_{h}$}(\psi) \leq  q_{h,\beta_{0}}^{\mathrm{eff},+}(\varphi)+\mathcal{O}(h^{2-\alpha-\eta})\|\varphi\|_{\textit{$L^{2}(\mathbb{R}/2L\mathbb{Z})$}}^{2}\,,$$
  where, for all $ \varphi \in \textit{$H^{1}(\mathbb{R}/2L\mathbb{Z})$},$
      $$q_{h,\beta_{0}}^{\mathrm{eff},+}(\varphi)=\displaystyle{\int^{L}_{-L}}\Big(-1-\kappa(s) h^{\frac{1}{2}}-\frac{\kappa(s)^{2}}{2}h \Big)\,|\varphi(s)|^{2}\mathrm{~d} s+h\big[1+c\,h^{\min(\alpha,\frac{1}{2})}\big]\displaystyle{\int^{L}_{-L}}|\,(\partial_{s}-ib\beta_{0})\varphi |^{2}\mathrm{~d} s\,.$$

      % \begin{remrak}
           %  We distinguish two important cases of $\eta \,:$
    % \item  \quad \quad \quad - If $\,0\leq \eta<\frac{5}{4}$, we choose $\alpha=\frac{1}{2}(\frac{5}{4}-\eta)\,$ then $,\,\mathcal{O}(h^{2-\alpha-\eta})=o(h^{\frac{3}{4}})\,.$
    % \item  \quad \quad \quad - If $\,\frac{5}{4} \leq \eta<\frac{3}{2}$, we choose $\alpha=\frac{1}{2}(\frac{3}{2}-\eta)\,$ then $,\,\mathcal{O}(h^{2-\alpha-\eta})=o(h^{\frac{1}{2}})\,.$
      % \end{remrak}
 \end{proposition}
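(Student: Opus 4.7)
The strategy is to plug the separable ansatz $\psi(s,\tau)=\varphi(s) v_{\kappa(s),h}(\tau)$ into the quadratic form $\widehat{q}^{\,b,\rho}_{h}(\psi)$ and expand it into a normal part (involving $\partial_{\tau}$ and the boundary trace at $\tau=0$) and a tangential magnetic part (involving the $s$-derivative). Since $v_{\kappa(s),h}$ is a normalized eigenfunction of $\mathcal{H}_{\kappa(s),h}$ on $L^{2}((0,h^{-\rho}),\hat{a}\,d\tau)$ with eigenvalue $\lambda_{1}(\mathcal{H}_{\kappa(s),h})$, the normal part collapses fiberwise to
\[\int_{0}^{h^{-\rho}} |\partial_{\tau}v_{\kappa(s),h}|^{2}\hat{a}\,d\tau - |v_{\kappa(s),h}(s,0)|^{2} = \lambda_{1}(\mathcal{H}_{\kappa(s),h}).\]
Multiplying by $|\varphi(s)|^{2}$, integrating over $s$, and invoking the asymptotic \eqref{4.4}, one produces the claimed leading terms $-1-\kappa(s)h^{1/2}-\tfrac{\kappa(s)^{2}}{2}h$ inside $q_{h,\beta_{0}}^{\mathrm{eff},+}(\varphi)$, with an $o(h)\|\varphi\|^{2}$ remainder that must be absorbed into $\mathcal{O}(h^{2-\alpha-\eta})$.

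For the tangential magnetic part, I would expand
\[\Bigl(h^{1/2}\partial_{s}-ibh^{1/2}\beta_{0}+ibh\tau-ibh^{3/2}\tfrac{\tau^{2}}{2}\kappa(s)\Bigr)\psi = h^{1/2}\bigl[(\partial_{s}-ib\beta_{0})\varphi\bigr]v_{\kappa(s),h} + R(s,\tau),\]
where $R$ gathers the three correction terms $h^{1/2}\varphi\,\partial_{s}v_{\kappa(s),h}$, $ibh\tau\,\varphi v_{\kappa(s),h}$ and $-ibh^{3/2}\tfrac{\tau^{2}}{2}\kappa\,\varphi v_{\kappa(s),h}$, and then apply the weighted triangle inequality $|X+Y|^{2}\leq(1+\varepsilon)|X|^{2}+(1+\varepsilon^{-1})|Y|^{2}$ with $\varepsilon=h^{\min(\alpha,1/2)}$; this choice of $\varepsilon$ is precisely what produces the exponent $r=\min(\alpha,1/2)$ of the proposition. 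The main term integrates to $h\int|(\partial_{s}-ib\beta_{0})\varphi|^{2}\,ds$ up to the weight correction $\int_{0}^{h^{-\rho}}|v_{\kappa(s),h}|^{2}\hat{a}^{-1}\,d\tau = 1+\mathcal{O}(h^{1/2})$, which follows from the identity $\hat{a}^{-1}-\hat{a}=\hat{a}^{-1}(2h^{1/2}\tau\kappa - h\tau^{2}\kappa^{2})$ combined with the moment bounds \eqref{L5} and the normalization of $v_{\kappa(s),h}$ in $L^{2}(\hat{a}\,d\tau)$. Together with the factor $(1+\varepsilon)$, this delivers exactly the enhanced coefficient $h\bigl[1+c\,h^{\min(\alpha,1/2)}\bigr]$ on the kinetic term.

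The error part $(1+\varepsilon^{-1})|R|^{2}$ is handled term by term via Cauchy--Schwarz. The $\partial_{s}v_{\kappa(s),h}$ summand contributes $h\,R_{h}(s)|\varphi|^{2}\leq Ch^{2}|\varphi|^{2}$ by the Born--Oppenheimer bound \eqref{RO}; the $ibh\tau$ summand contributes $b^{2}h^{2}\int\tau^{2}|v_{\kappa(s),h}|^{2}\,d\tau\,|\varphi|^{2}\leq Ch^{2-\eta}|\varphi|^{2}$ thanks to \eqref{L5} and $b\leq c_{2}h^{-\eta/2}$; the $bh^{3/2}\tau^{2}\kappa$ summand is of even lower order. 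The dominant contribution $Ch^{2-\eta}$, amplified by $(1+\varepsilon^{-1})\sim h^{-\min(\alpha,1/2)}$, yields an overall error of size $h^{2-\eta-\min(\alpha,1/2)}$, which matches the announced $\mathcal{O}(h^{2-\alpha-\eta})$ whether $\alpha\leq 1/2$ (directly) or $\alpha>1/2$ (since then $2-\alpha-\eta\leq 3/2-\eta = 2-\eta-\min(\alpha,1/2)$).

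The main obstacle is the balancing step: the magnetic cross-term $ibh\tau$ is pointwise of size $bh\tau\leq ch^{1-\eta/2-\rho}$, which is not small, so the required smallness only emerges after averaging against $|v_{\kappa(s),h}|^{2}$ via the weighted moments \eqref{L5}. Ensuring that this averaged bound, once compounded with the Born--Oppenheimer amplification $(1+\varepsilon^{-1})$, still lands inside the remainder $\mathcal{O}(h^{2-\alpha-\eta})$, together with the mild restriction $\rho<1/4$ needed for the weight $\hat{a}$ to be uniformly close to $1$, is the central accounting that pins down the optimal choice $\varepsilon=h^{\min(\alpha,1/2)}$.
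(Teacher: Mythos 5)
Your proposal follows essentially the same route as the paper's proof: split $\widehat{q}^{\,b,\rho}_{h}$ into the normal part, which collapses to $\int\lambda_{1}(\mathcal{H}_{\kappa(s),h})|\varphi|^{2}\,\mathrm{d}s$ and is expanded via \eqref{4.4}, and the tangential magnetic part, handled by a weighted Cauchy--Schwarz inequality together with the weight approximation of Lemma \ref{A}, the moment bounds \eqref{L5} for the $bh\tau$ cross-term, and the Born--Oppenheimer bound \eqref{RO} for the $\partial_{s}v_{\kappa(s),h}$ term. The only cosmetic difference is that you apply the splitting inequality once with $\varepsilon=h^{\min(\alpha,1/2)}$ where the paper applies it twice with $\varepsilon=h^{\alpha}$ and lets the $(1+ch^{1/2})$ weight correction produce the $\min(\alpha,\tfrac12)$ exponent; your error accounting in both regimes $\alpha\leq\tfrac12$ and $\alpha>\tfrac12$ is correct.
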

\begin{proof}
The proof will be done in a five steps.\\
        i.  We write
    $$\textit{$\widehat{q}^{\,b,\rho}_{h}$}(\psi)=\textit{${q}^{\,tang}$}(\psi)+
   \textit{${q}^{\,trans}$}(\psi)\,,$$
   where $$\,\,\,\,\,\,\,\,\,\, \textit{${q}^{\,trans}$}(\psi)=\displaystyle{\int^{L}_{-L}\int^{h^{-\rho}}_{0}}\,\bigg|\,\bigg(h^{\frac{1}{2}}\partial_{s}-ibh^{\frac{1}{2}}\beta_{0}-ibh\Big(-\tau+h^{\frac{1}{2}}\frac{\tau^{2}}{2}\kappa(s)\Big)\bigg)\psi\bigg|^{2}\,\hat{a}^{-1}\mathrm{~d} \tau \mathrm{d} s\,,$$ and
   $$\textit{${q}^{\,tang}$}(\psi)=\displaystyle{\int^{L}_{-L}\int^{h^{-\rho}}_{0}}|\partial_{\tau}\psi|^{2}\,\hat{a}\mathrm{~d} \tau \mathrm{d} s-\displaystyle{\int^{L}_{-L}}|\psi(s,\tau=0)|^{2}\mathrm{~d} s\,.$$
   ii. Upper bound of $\textit{${q}^{\,tang}$}(\psi).$  We get, by using the min-max principle and \eqref{4.4}
      \begin{align*}
   \textit{${q}^{\,tang}$}(\psi)\,&=\displaystyle{\int^{L}_{-L}\,|\varphi(s)|^{2}\bigg[\,\int^{h^{-\rho}}_{0}}|\partial_{\tau}v_{\kappa(s),h}(\tau)|^{2}\,\hat{a}\mathrm{~d} \tau -|v_{\kappa(s),h}(0)|^{2}\,\bigg]\mathrm{~d} s\\&= \displaystyle{\int^{L}_{-L}}\textit{$ q_{\kappa(s),h} $}(v_{\kappa(s),h})\,|\varphi(s)|^{2} \mathrm{~d} s\\&= \displaystyle{\int^{L}_{-L}}\lambda_{1}(\mathcal{H}_{\kappa(s),h})\,|\varphi(s)|^{2}\mathrm{~d} s\\&\leq\displaystyle{\int^{L}_{-L}}\Big(-1-\kappa(s) h^{\frac{1}{2}}-\frac{\kappa(s)^{2}}{2}h \Big)\,|\varphi(s)|^{2}\mathrm{~d} s +o(h)\|\varphi\|^{2}\,. \\
   \end{align*}
   iii. Upper bound of $\textit{${q}^{\,trans}$}(\psi)$. Using that for any $a,b \in \mathbb{R}\,,$ $h>0\,$ and $\alpha>0$ such that 
   $$|a+b|^{2}\leq (1+h^{\alpha}\,)\,|a|^{2}+(1+h^{-\alpha}\,)\,|b|^{2}\,,$$
   we have 
   $$  \textit{${q}^{\,trans}$}(\psi)\leq (1+h^{{\alpha}}\,)\,\textit{${q}_{1}^{\,trans}$}(\psi)+(1+h^{-{\alpha}})\textit{${q}_{2}^{\,trans}$}(\psi)\,,$$
   where 
   $$\textit{${q}_{1}^{\,trans}$}(\psi)=\displaystyle{\int^{L}_{-L}\int^{h^{-\rho}}_{0}}\,\bigg|\,\Big(h^{\frac{1}{2}}\partial_{s}-ibh^{\frac{1}{2}}\beta_{0}\Big)\psi\bigg|^{2}\hat{a}^{-1}\mathrm{~d} \tau \mathrm{d} s\,,$$ 
   and 
$$\textit{${q}_{2}^{\,trans}$}(\psi)=b^{2} h^{2} \displaystyle{\int^{L}_{-L}\int^{h^{-\rho}}_{0}}\,\bigg|\,\Big(-\tau+h^{\frac{1}{2}}\frac{\tau^{2}}{2}\kappa(s)\Big)\psi\bigg|^{2}\hat{a}^{-1}\mathrm{~d} \tau\mathrm{d} s\,. $$
   iv. By use Lemma \ref{A}, there exists $ c> 0 $ such that
   \begin{align*}
   \textit{${q}_{2}^{\,trans}$}(\psi)\,&\leq b^{2} h^{2}(1+c\,h^{1/4}) \displaystyle{\int^{L}_{-L}|\varphi(s)|^{2}\bigg[\int^{h^{-\rho}}_{0}} \bigg|\,\Big(-\tau+h^{\frac{1}{2}}\frac{\tau^{2}}{2}\kappa(s)\Big)v_{\kappa,h}\bigg|^{2}\hat{a}\mathrm{~d} \tau \bigg]\mathrm{~d} s\,.
   \end{align*}
Using the inequality \eqref{L5}, and since the curvature is bounded, we get with a possible new constant $c,$
  $$\displaystyle{\int^{h^{-\rho}}_{0}} \bigg|\,\Big(-\tau+h^{\frac{1}{2}}\frac{\tau^{2}}{2}\kappa(s)\Big)v_{\kappa,h}\bigg|^{2}\hat{a}\mathrm{~d} \tau \leq c\,.$$
Therefore, $$(1+h^{-\alpha}\,)\,\textit{${q}_{2}^{\,trans}$}(\psi)\leq c\,b^{2}h^{2-\alpha}\|\varphi\|^{2}\leq  c h^{2-\alpha-\eta}\|\varphi\|^{2}\,.$$
      and
 $$(1+h^{-\alpha}\,)\,\textit{${q}_{2}^{\,trans}$}(\psi)=\mathcal{O}(h^{2-\alpha-\eta})\|\varphi\|^{2}\,.$$ 
 %\item[-]  If $0 \leq \eta<1$, we choose  $\alpha=\frac{1}{2}(1-\eta)\,$ then $$\dfrac{h^{2-\alpha-\eta}}{h}=h^{1-\alpha-\eta}=h^{\frac{1}{2}(1-\eta)}\underset{h \to 0^{+}}{\longrightarrow }0\,.$$
 Note that:\\
\item [-] If $0 \leq \eta<\frac{5}{4}$, we choose $\alpha=\frac{1}{2}(\frac{5}{4}-\eta)\,$ then $$\dfrac{h^{2-\alpha-\eta}}{h^{\frac{3}{4}}}=h^{\frac{5}{4}-\alpha-\eta}=h^{\frac{1}{2}(\frac{5}{4}-\eta)}\underset{h \to 0^{+}}{\longrightarrow }0\,.$$
\item[-] If $\,\frac{5}{4} \leq \eta<\frac{3}{2}$, we choose $\alpha=\frac{1}{2}(\frac{3}{2}-\eta)\,$ then $$\dfrac{h^{2-\alpha-\eta}}{h^{\frac{1}{2}}}=h^{\frac{3}{2}-\alpha-\eta}=h^{\frac{1}{2}(\frac{3}{2}-\eta)}\underset{h \to 0^{+}}{\longrightarrow }0\,.$$
 v. We have
 \begin{align*}
   \textit{${q}_{1}^{\,trans}$}(\psi)\,&\leq h\,(1+h^{\alpha}\,)\,\displaystyle{\int^{L}_{-L}\int^{h^{-\rho}}_{0}}\,|\,v_{\kappa(s),h}\,(\partial_{s}-ib\beta_{0})\varphi |^{2}\,\tilde{a}^{-1}\mathrm{~d} \tau \mathrm{d} s\\&\,\,\,\,+h(\,1+h^{-\alpha}\,)\,\displaystyle{\int^{L}_{-L}\int^{h^{-\rho}}_{0}}\, |\partial_{s}v_{\kappa(s),h}|^{2}|\varphi(s)|^{2}\,\tilde{a}^{-1}\mathrm{~d} \tau\mathrm{d} s\,.
   \end{align*}
   Using Lemma \ref{A}  and Corollary \ref{AZ}, we write
   \begin{align*}
   \textit{${q}_{1}^{\,trans}$}(\psi)\,&\leq h\,(1+h^{\alpha}\,)(1+c\,h^{1/2})\,\displaystyle{\int^{L}_{-L}}|\,(\partial_{s}-ib\beta_{0})\varphi |^{2}\mathrm{~d} s\\& \,\,\,\,+h\,(1+h^{-\alpha}\,)(1+c\,h^{1/4})\,\displaystyle{\int^{L}_{-L}|\varphi(s)|^{2}\bigg[\int^{h^{-\rho}}_{0}}\, |\partial_{s}v_{\kappa(s),h}|^{2}\hat{a}\mathrm{~d} \tau\bigg]\mathrm{~d} s\,.
   \end{align*}
   From \eqref{RO}, we deduce that
   \begin{align*}
       (1+h^{\alpha})\textit{${q}_{1}^{\,trans}$}(\psi)&\leq h\,[1+c\,h^{\text{min}(\alpha,\frac{1}{2})}\,]\displaystyle{\int^{L}_{-L}}|\,(\partial_{s}-ib\beta_{0})\varphi |^{2}\mathrm{~d} s+\mathcal{O}(h^{2-\alpha})\|\varphi\|^{2}\\&\leq h\,[1+c\,h^{\text{min}(\alpha,\frac{1}{2})}\,]\displaystyle{\int^{L}_{-L}}|\,(\partial_{s}-ib\beta_{0})\varphi |^{2}\mathrm{~d} s+\mathcal{O}(h^{2-\alpha-\eta})\|\varphi\|^{2} \,.
   \end{align*}
Then the conclusion follows.
\end{proof}
\subsection{Lower bound.}\label{M2}
The following proposition provides a lower bound of the quadratic form. 
\begin{proposition}
  \label{A10} 
   Let  $\rho \in (0,1/4)\,,$ $ 0 \leq \eta < \frac32\,,$  $\alpha>0$  and  $0< c_{1}<c_{2}$\,.   There exist constants $c>0\,,$ $h_{0}>0\,$  such that, for all $h\in(0,h_{0})\,$ and $b$ satisfying 
 $$c_{1}h^{\frac{-\eta}{2}}\leq b \leq c_{2}h^{\frac{-\eta}{2}}\,, $$ we have 
   \begin{align*}
      \textit{$\widehat{q}^{\,b,\rho}_{h}$}(\psi)& \geq  q_{h,\beta_{0}}^{\mathrm{eff},-}(\varphi)+\mathcal{O}(h^{2-\alpha-\eta})\|\varphi\|_{\textit{$L^{2}(\mathbb{R}/2L\mathbb{Z})$}}^{2}-\mathcal{O}( h^{-\eta+\frac{3}{2}-\alpha})\|\Pi^{\perp}_{s}\psi\|^{2}_{\textit{$L^{2}(\hat{a}\mathrm{~d} \tau \mathrm{d} s)$}}\,, 
   \end{align*}
   where
   \begin{enumerate}
   \item[$\bullet$] $ \textit{$\widehat{q}^{\,b,\rho}_{h}$}$ define on $\mathrm{Dom}({\textit{$\widehat{q}^{\,b,\rho}_{h}$}})$ in \eqref{ay}.
    \item[$\bullet$]  $\psi \in\mathrm{Dom}({\textit{$\widehat{q}^{\,b,\rho}_{h}$}})$.
     \item[$\bullet$]  $ \varphi \in \textit{$H^{1}(\mathbb{R}/2L\mathbb{Z})$} $ defined by  $$\varphi(s):=\langle\psi(s,\cdot),v_{\kappa(s),h}\rangle_{\textit{$L^{2}((0,h^{-\rho}),\hat{a}\mathrm{~d} \tau)$}}\,,$$
  and
   $$ \begin{aligned}
        \quad q_{h,\beta_{0}}^{\mathrm{eff},-}(\varphi)&=\displaystyle{\int^{L}_{-L}}\Big(-1-\kappa(s) h^{\frac{1}{2}}-\frac{\kappa(s)^{2}}{2}h \Big)\,|\varphi(s)|^{2}\mathrm{~d} s\\&\quad\quad\quad\quad\quad\quad\quad\quad\quad\quad\quad\quad+h[1-c\,h^{\min(\alpha,\frac{1}{2})})]\displaystyle{\int^{L}_{-L}}|\,(\partial_{s}-ib\beta_{0})\varphi |^{2}\mathrm{~d} s\,.
      \end{aligned}$$
       \end{enumerate}
  %We distinguish two important cases of $\eta \,:$
%  \item  \quad \quad \quad - If $\,0\leq\eta<1$, we choose $\alpha=\frac{1}{2}(1-\eta)\,$ then $,\,\mathcal{O}(h^{2-\alpha-\eta})=o(h)\,.$
    % \item  \quad \quad \quad - If $\,0\leq \eta<\frac{5}{4}$, we choose $\alpha=\frac{1}{2}(\frac{5}{4}-\eta)\,$ then $,\,\mathcal{O}(h^{2-\alpha-\eta})=o(h^{\frac{3}{4}})\,.$
     %\item  \quad \quad \quad - If $\,\frac{5}{4} \leq \eta<\frac{3}{2}$, we choose $\alpha=\frac{1}{2}(\frac{3}{2}-\eta)\,$ then $,\,\mathcal{O}(h^{2-\alpha-\eta})=o(h^{\frac{1}{2}})\,.$
 \end{proposition}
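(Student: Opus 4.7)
The plan is to mirror the upper bound proof of Proposition~\ref{A1}, but without assuming $\psi$ has the product form $\varphi(s)v_{\kappa(s),h}(\tau)$. The Feshbach decomposition $\psi=\Pi_{s}\psi+\Pi^{\perp}_{s}\psi$, with $\Pi_{s}\psi(s,\tau)=\varphi(s)v_{\kappa(s),h}(\tau)$, is central, together with the commutator estimate (Lemma~\ref{L1}), the weight approximation (Lemma~\ref{A}) and Corollary~\ref{AZ}. The strategy is to split $\widehat{q}^{\,b,\rho}_{h}(\psi)=q^{tang}(\psi)+q^{trans}(\psi)$ exactly as in the upper bound proof, and then bound each piece from below.

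For the tangential part, since $\mathcal{H}_{\kappa(s),h}$ is self-adjoint at each fixed $s$ and leaves $\mathrm{span}(v_{\kappa(s),h})$ and its orthogonal complement invariant, one gets
$$q^{tang}(\psi)=\int_{-L}^{L}\lambda_{1}(\mathcal{H}_{\kappa(s),h})\,|\varphi(s)|^{2}\,ds+\int_{-L}^{L}\bigl\langle\mathcal{H}_{\kappa(s),h}\Pi^{\perp}_{s}\psi,\Pi^{\perp}_{s}\psi\bigr\rangle_{L^{2}(\hat{a}\,d\tau)}\,ds.$$
The first integrand is handled by \eqref{4.4} to produce the potential part of $q_{h,\beta_{0}}^{\mathrm{eff},-}(\varphi)$; the second is bounded below via \eqref{&é} by $-Ch^{1/2-\rho}\|\Pi^{\perp}_{s}\psi\|^{2}$, which is of smaller order than the $\Pi^{\perp}_{s}\psi$ remainder ultimately written in the statement.

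For the transverse part, I would first apply the reverse Cauchy--Schwarz $|A+B|^{2}\geq (1-h^{\alpha})|A|^{2}-h^{-\alpha}|B|^{2}$ to separate the tangential magnetic derivative $(h^{1/2}\partial_{s}-ibh^{1/2}\beta_{0})\psi$ from the small transverse correction $-ibh(-\tau+h^{1/2}\tau^{2}\kappa(s)/2)\psi$. The latter is controlled by \eqref{L5} and the bound $b^{2}\leq c_{2}^{2}h^{-\eta}$, yielding a contribution of size $Ch^{2-\alpha-\eta}(\|\varphi\|^{2}+\|\Pi^{\perp}_{s}\psi\|^{2})$. For the main piece I would write $(\partial_{s}-ib\beta_{0})\psi=\Pi_{s}(\partial_{s}-ib\beta_{0})\psi+\Pi^{\perp}_{s}(\partial_{s}-ib\beta_{0})\psi$ and use Lemma~\ref{L1} (with $R_{h}(s)^{1/2}=\mathcal{O}(h^{1/2})$ from \eqref{RO}) to identify $\Pi_{s}(\partial_{s}-ib\beta_{0})\psi$ with $(\partial_{s}-ib\beta_{0})\varphi\cdot v_{\kappa(s),h}$ up to errors of order $h^{1/2}\|\varphi\|$. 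Then Lemma~\ref{A} (and Corollary~\ref{AZ}) converts the weight $\hat{a}^{-1}$ into $1+\mathcal{O}(h^{1/4})$ on the orthogonal piece and $1+\mathcal{O}(h^{1/2})$ on the projected piece, producing the factor $1-c\,h^{\min(\alpha,1/2)}$ in front of $\int|(\partial_{s}-ib\beta_{0})\varphi|^{2}\,ds$.

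The main obstacle is the bookkeeping of cross terms between $\Pi_{s}\psi$ and $\Pi^{\perp}_{s}\psi$ in the transverse quadratic form: each Cauchy--Schwarz with small parameter $h^{\alpha}$ generates an error proportional to $\|\Pi^{\perp}_{s}\psi\|^{2}$, and the largest of these has order $h^{3/2-\alpha-\eta}$, arising from the cross term between the magnetic multiplier $ib\beta_{0}\Pi_{s}\psi$ (where $b^{2}\lesssim h^{-\eta}$) and the derivative of $\Pi^{\perp}_{s}\psi$. This error is kept explicit in the statement because it is harmless at the level of Theorem~\ref{R21}: when combined with the positive $\|\Pi^{\perp}_{s}\psi\|^{2}$ contribution coming from the tangential spectral gap $\lambda_{2}(\mathcal{H}_{\kappa(s),h})-\lambda_{1}(\mathcal{H}_{\kappa(s),h})\gtrsim 1$, it is absorbed provided $\eta<3/2$. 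Once this decomposition and the Cauchy--Schwarz estimates are collected with an appropriate choice of $\alpha$, the stated inequality follows directly.
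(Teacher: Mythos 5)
Your proposal follows essentially the same route as the paper: the same tangential/transverse splitting, the spectral lower bound on the tangential part via $\lambda_{1}$, $\lambda_{2}$ of $\mathcal{H}_{\kappa(s),h}$, and the treatment of the transverse part via the reverse Cauchy--Schwarz inequality, the Feshbach projection, Lemma~\ref{L1}, Lemma~\ref{A} and Corollary~\ref{AZ}. The only minor discrepancy is the attribution of the dominant $\mathcal{O}(h^{3/2-\alpha-\eta})\|\Pi^{\perp}_{s}\psi\|^{2}$ error, which in the paper arises from the term $b^{2}h^{2-\alpha}\int|\alpha_{s}\Pi^{\perp}_{s}\psi|^{2}$ (the magnetic correction acting on the orthogonal component) rather than from a cross term with $\partial_{s}\Pi^{\perp}_{s}\psi$; this does not affect the validity of the argument.
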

 \begin{proof}
 We write 
   \begin{align*}
   \textit{$\widehat{q}^{\,b,\rho}_{h}$}(\psi)=\textit{${q}^{\,trans}$}(\psi)+\textit{${q}^{\,tang}$}(\psi)\,,
   \end{align*}
    with $$\,\,\,\,\,\,\,\,\,\,\textit{${q}^{\,trans}$}(\psi)=\displaystyle{\int^{L}_{-L}\int^{h^{-\rho}}_{0}}\,\bigg|\,\bigg(h^{\frac{1}{2}}\partial_{s}-ibh^{\frac{1}{2}}\beta_{0}-ibh\Big(-\tau+h^{\frac{1}{2}}\frac{\tau^{2}}{2}\kappa(s)\Big)\bigg)\psi\bigg|^{2}\,\hat{a}^{-1}\mathrm{~d} \tau \mathrm{d} s\,,$$ and 
   $$\textit{${q}^{\,tang}$}(\psi)=\displaystyle{\int^{L}_{-L}\int^{h^{-\rho}}_{0}}|\partial_{\tau}\psi|^{2}\hat{a}\mathrm{~d} \tau \mathrm{d}s-\displaystyle{\int^{L}_{-L}}|\psi(s,\tau=0)|^{2}\mathrm{~d} s\,.$$\\
     i. By using the orthogonal decomposition
     $\psi=\Pi_{s}\psi +\Pi^{\perp}_{s}\psi$, we get:
       $$\textit{$ q_{\kappa(s),h}$}(\psi)=\textit{$ q_{\kappa(s),h}$}(\Pi_{s}\psi )+\textit{$ q_{\kappa(s),h}$}(\Pi^{\perp}_{s}\psi)\,,$$
       Then, by using the min-max principle,we get
       \begin{align*}
         \textit{${q}^{\,tang}$}(\psi)&=\displaystyle{\int^{L}_{-L}}\textit{$ q_{\kappa(s),h}$}(\psi)\mathrm{~d} s \\&=\displaystyle{\int^{L}_{-L}} \textit{$ q_{\kappa(s),h}$}(\varphi v_{\kappa(s),h})\mathrm{~d} s+\displaystyle{\int^{L}_{-L}} \textit{$ q_{\kappa(s),h}$}(\Pi^{\perp}_{s}\psi)\mathrm{~d} s\\& \geq \displaystyle{\int^{L}_{-L}} \lambda_{1}(\mathcal{H}_{\kappa(s),h})\|\varphi v_{\kappa(s),h}\|^{2}_{\textit{$L^{2}((0,h^{-\rho}),\hat{a}\,d\tau)$}} \mathrm{~d} s \\&\quad+ \displaystyle{\int^{L}_{-L}} \lambda_{2}(\mathcal{H}_{\kappa(s),h})\|\Pi^{\perp}_{s}\psi\|^{2}_{\textit{$L^{2}((0,h^{-\rho}),\hat{a}\,d\tau)$}}\mathrm{~d} s\,.
         \end{align*}
From \eqref{4.4} and \eqref{&é}, we have 
       $$\textit{${q}^{\,tang}$}(\psi)\geq\displaystyle{\int^{L}_{-L}}\Big(-1-\kappa(s) h^{\frac{1}{2}}-\frac{\kappa(s)^{2}}{2}h \Big)\,|\varphi(s)|^{2}\mathrm{~d} s +o(h)\|\varphi\|^{2}-Ch^{1/2-\rho}\|\Pi^{\perp}_{s}\psi\|^{2}_{\textit{$L^{2}(\hat{a}\mathrm{~d} \tau \mathrm{d} s)$}}\,.$$
    ii. By Lemma \ref{A}, we have 
       \begin{align*}
          \textit{${q}^{\,trans}$}(\psi)& \geq(1-ch^{1/2}\,)\displaystyle{\int^{L}_{-L}\int^{h^{-\rho}}_{0}}\,\bigg|h^{\frac{1}{2}}\,\Pi_{s}\partial_{s}\psi-ibh^{\frac{1}{2}}\beta_{0}\Pi_{s}\psi-ibh\Pi_{s}(\alpha_{s}\psi)\bigg|^{2}\,\hat{a}\mathrm{~d} \tau \mathrm{d} s \\&\,\,\,\,\,\,\,\,\,\,\,+(1-ch^{1/4}\,)\displaystyle{\int^{L}_{-L}\int^{h^{-\rho}}_{0}}\,\bigg|h^{\frac{1}{2}}\,\Pi^{\perp}_{s}\partial_{s}\psi-ibh^{\frac{1}{2}}\beta_{0}\Pi^{\perp}_{s}\psi-ibh\Pi^{\perp}_{s}(\alpha_{s}\psi)\bigg|^{2}\,\hat{a}\mathrm{~d} \tau \mathrm{d} s\,.
      \end{align*}
     with   $$\alpha_{s}(\tau)=-\tau+h^{1/2}\frac{\tau^{2}}{2}\kappa(s)\,.$$
     We write $$\,\,\,\,\,\, \Pi_{s}\partial_{s}\psi=\partial_{s}\Pi_{s}\psi+[\Pi_{s},\partial_{s}]\psi\,,$$   and use the following classical inequality. For any $a,b \in \mathbb{R}\,,$ $h>0\,$ and $\alpha>0$, we have 
     $$|a+b|^{2} \geq (1-h^{\alpha})|a|^{2}-h^{-\alpha}|b|^{2}\,.$$
       We obtain
       \begin{align*}
           \textit{${q}^{\,trans}$}(\psi)&\geq (1-ch^{1/2})(1-h^{\alpha})\textit{${q}_{1}^{\,trans}$}(\psi)-(1-ch^{1/2})h^{-\alpha}\textit{${q}_{2}^{\,trans}$}(\psi)\\&\,\,\,\,\,+(1-ch^{1/4})(1-h^{\alpha})\textit{${q}_{3}^{\,trans}$}(\psi)-(1-ch^{1/4})h^{-\alpha}\textit{${q}_{2}^{\,trans}$}(\psi)\,,
       \end{align*}
        with \\
        %\begin{enumerate}
$\bullet$ $\textit{${q}_{1}^{\,trans}$}(\psi)=\displaystyle{\int^{L}_{-L}\int^{h^{-\rho}}_{0}}\,\Big|h^{\frac{1}{2}}\,\partial_{s}\Pi_{s}\psi-ibh^{\frac{1}{2}}\beta_{0}\Pi_{s}\psi-ibh\Pi_{s}(\alpha_{s}\psi)\Big|^{2}\,\hat{a}\mathrm{~d} \tau \mathrm{d} s\,,$\\
$\bullet$ $\textit{${q}_{2}^{\,trans}$}(\psi)=h\displaystyle{\int^{L}_{-L}\int^{h^{-\rho}}_{0}}\Big|[\Pi_{s},\partial_{s}]\psi\Big|^{2}\,\hat{a}\,d\tau ds\,=
h\displaystyle{\int^{L}_{-L}\int^{h^{-\rho}}_{0}}\Big|[\Pi^{\perp}_{s},\partial_{s}]\psi\Big|^{2}\hat{a}\mathrm{~d} \tau \mathrm{d} s\,,$\\
$\bullet$ $\textit{${q}_{3}^{\,trans}$}(\psi)=\displaystyle{\int^{L}_{-L}\int^{h^{-\rho}}_{0}}\,\Big|h^{\frac{1}{2}}\,\partial_{s}\Pi^{\perp}_{s}\psi-ibh^{\frac{1}{2}}\beta_{0}\Pi^{\perp}_{s}\psi-ibh\Pi^{\perp}_{s}(\alpha_{s}\psi)\Big|^{2}\,\hat{a}\mathrm{~d} \tau \mathrm{d} s\,.$\\
     iii. We have 
       \begin{align*}
\textit{${q}_{1}^{\,trans}$}(\psi)&\geq (1-h^{\alpha})\displaystyle{\int^{L}_{-L}\int^{h^{-\rho}}_{0}}\,\Big|h^{\frac{1}{2}}\,\partial_{s}\Pi_{s}\psi-ibh^{\frac{1}{2}}\beta_{0}\Pi_{s}\psi\Big|^{2}\,\hat{a}\mathrm{~d} \tau \mathrm{d} s\\& \\&\quad\quad\quad\quad\quad\quad\quad\quad\quad\quad-h^{-\alpha}\displaystyle{\int^{L}_{-L}\int^{h^{-\rho}}_{0}}\,\Big|bh\Pi_{s}(\alpha_{s}\psi)\Big|^{2}\hat{a}\mathrm{~d} \tau \mathrm{d} s\,.
\end{align*}
Clearly,
    \begin{align*}
    \displaystyle{\int^{L}_{-L}\int^{h^{-\rho}}_{0}}\,\Big|\Pi_{s}(\alpha_{s}\psi)\Big  |^{2}\,\hat{a}\mathrm{~d} \tau \mathrm{d} s &\leq \displaystyle{\int^{L}_{-L}\int^{h^{-\rho}}_{0}}\,\Big|\alpha_{s}\psi\Big|^{2}\hat{a}\mathrm{~d} \tau \mathrm{d} s \\& \leq 2\displaystyle{\int^{L}_{-L}\int^{h^{-\rho}}_{0}}\,\Big|\alpha_{s}\Pi_{s}\psi\Big|^{2}\hat{a}\mathrm{~d} \tau \mathrm{d} s+2\displaystyle{\int^{L}_{-L}\int^{h^{-\rho}}_{0}}\,\Big|\alpha_{s}\Pi^{\perp}_{s}\psi\Big|^{2}\hat{a}\mathrm{~d} \tau \mathrm{d} s \,.
     \end{align*}
With the same type of reasoning as for the upper bound, there exists a constant $c>0$ such that 
\begin{align*}
    \textit{${q}_{1}^{\,trans}$}(\psi)&\geq h(1-c\,h^{\alpha})\displaystyle{\int^{L}_{-L}}|\,(\partial_{s}-ib\beta_{0})\varphi |^{2}\mathrm{~d} s+\mathcal{O}(h^{2-\alpha-\eta})\|\varphi\|_{\textit{$L^{2}(\mathbb{R}/2L\mathbb{Z})$}}^{2}\\& \quad -\mathcal{O}(b^{2} h^{\frac{3}{2}-\alpha})\|\Pi^{\perp}_{s}\psi\|^{2}_{\textit{$L^{2}(\hat{a}\mathrm{~d} \tau \mathrm{d} s)$}}\,.
\end{align*}
   vi. We use Lemma \ref{L1}  and the inequality \eqref{RO}, to obtain
       $$
        \textit{${q}_{2}^{\,trans}$}(\psi)\leq c h^{2} \|\psi\|_{\textit{$L^{2}(\hat{a}\mathrm{~d} \tau\mathrm{d} s)$}}\,\leq  ch^{2}\|\varphi\|^{2}+ch^{2}\|\Pi^{\perp}_{s}\psi\|^{2}_{\textit{$L^{2}(\hat{a}\mathrm{~d} \tau \mathrm{d} s)$}}\,.
       $$
      v.
       Since $\textit{${q}_{3}^{\,trans}$}(\psi)\geq 0\,, $ we obtain:
       \begin{align*}
             \textit{${q}^{\,trans}$}(\psi) & \geq h(1-c\,h^{\text{min}(\alpha,\frac{1}{2})})\displaystyle{\int^{L}_{-L}}|\,(\partial_{s}-ib\gamma_{0})\varphi |^{2}\mathrm{~d} s + \mathcal{O}(h^{2-\alpha-\eta})\,\|\varphi\|^{2} \\& \quad -ch^{-\eta+3/2-\alpha}\,\|\Pi^{\perp}_{s}\psi\|^{2}_{\textit{$L^{2}(\hat{a}\mathrm{~d} \tau \mathrm{~d} s)$}}\ \,.
       \end{align*}
   Then the conclusion of the proposition follows.
 \end{proof}
   \subsection{End of the proof}
   We now have everything to finish the proof of Theorem \ref{R21}.
 The self-adjoint operator associated with the quadratic form
 $q_{h,\beta_{0}}^{\mathrm{eff},\pm}$ is :$$\mathcal{L}_{h,\beta_{0}}^{\mathrm{eff},\pm}=-h(1\pm c_{\pm}\,h^{\text{min}(\alpha,\frac{1}{2})})(\partial_{s}-ib\beta_{0})^{2}-1-\kappa(s) h^{\frac{1}{2}}-\frac{\kappa(s)^{2}}{2}h$$
  in  $\textit{$L^{2}(\mathbb{R}/2L\mathbb{Z})$}\,,$ where $c_{\pm}$ is a constant independent of $h$. The operator  $\mathcal{L}_{h,\beta_{0}}^{\mathrm{eff},\pm}$ is with compact resolvent and it is bounded from below. Its spectrum is purely discrete and it is consists of by an increasing sequence of eigenvalues $\lambda_{n}(\mathcal{L}_{h,\beta_{0}}^{\mathrm{eff},\pm})\,.$
   \begin{corollary}
   \label{11a}
     Let $n \in \mathbb{N}^{*},\,$ $\rho \in (0, 1/4) \,,$ $ 0 \leq \eta <\frac{3}{2}  \,,$ $\alpha>0\,$ and $0< c_{1}<c_{2}$\,. There exist  $h_{0}\,>0,\,$ such that, if $h\in(0,h_{0})\,$
  and $b$ satisfies
 $$c_{1}h^{\frac{-\eta}{2}}\leq b \leq c_{2}h^{\frac{-\eta}{2}}\,, $$  we have :
   $$\widehat{\mu}_{n}(h,b,\rho) \leq \lambda_{n}(\mathcal{L}_{h,\beta_{0}}^{\mathrm{eff},+})+\mathcal{O}(h^{2-\alpha-\eta})\,.$$
   \end{corollary}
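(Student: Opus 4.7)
\textbf{Proof plan for Corollary \ref{11a}.} The plan is to use the min--max principle with test functions built from the eigenfunctions of the effective operator $\mathcal{L}_{h,\beta_0}^{\mathrm{eff},+}$ acting on $L^2(\mathbb{R}/2L\mathbb{Z})$, using Proposition \ref{A1} as the core input. Since $\mathcal{L}_{h,\beta_0}^{\mathrm{eff},+}$ has compact resolvent, pick an $L^2(\mathbb{R}/2L\mathbb{Z})$-orthonormal family $(\varphi_k)_{1\leq k\leq n}$ of eigenfunctions corresponding to $\lambda_1(\mathcal{L}_{h,\beta_0}^{\mathrm{eff},+})\leq\cdots\leq\lambda_n(\mathcal{L}_{h,\beta_0}^{\mathrm{eff},+})$, and define the tensorized test functions
\[
\psi_k(s,\tau)=\varphi_k(s)\,v_{\kappa(s),h}(\tau),\qquad 1\leq k\leq n.
\]
These lie in $\mathrm{Dom}(\widehat{q}^{\,b,\rho}_h)$ because $v_{\kappa(s),h}$ satisfies the Robin boundary condition at $\tau=0$ and the Dirichlet condition at $\tau=h^{-\rho}$ by construction (it is the groundstate of $\mathcal{H}_{\kappa(s),h}=\mathcal{H}_B^{\{T\}}$), while $\varphi_k$ is periodic in $s$.

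Next I would verify that the span $F_h=\mathrm{Vect}(\psi_1,\dots,\psi_n)$ has dimension $n$ and compute the norm of an arbitrary element. For $\psi=\sum_{k=1}^n\beta_k\psi_k$, setting $\varphi=\sum_{k=1}^n\beta_k\varphi_k$, Fubini and the fact that $v_{\kappa(s),h}$ is normalized in $L^2((0,h^{-\rho}),\hat a\,\mathrm{d}\tau)$ for every $s$ give
\[
\|\psi\|^2_{L^2(\hat a\,\mathrm{d}\tau\,\mathrm{d}s)}=\int_{-L}^{L}|\varphi(s)|^2\,\mathrm{d}s=\|\varphi\|^2_{L^2(\mathbb{R}/2L\mathbb{Z})}=\sum_{k=1}^n|\beta_k|^2,
\]
so in particular $\dim F_h=n$. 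Now apply Proposition \ref{A1} to this $\psi$: the right-hand side is linear in $|\varphi|^2$ and $|(\partial_s-ib\beta_0)\varphi|^2$, so it is exactly $q_{h,\beta_0}^{\mathrm{eff},+}(\varphi)+\mathcal O(h^{2-\alpha-\eta})\|\varphi\|^2$, without needing to re-prove the upper bound for linear combinations.

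The next step is to bound the effective quadratic form on $\varphi$ using the spectral theorem applied to $\mathcal{L}_{h,\beta_0}^{\mathrm{eff},+}$: because $\varphi\in\mathrm{Vect}(\varphi_1,\dots,\varphi_n)$,
\[
q_{h,\beta_0}^{\mathrm{eff},+}(\varphi)=\sum_{k=1}^n\lambda_k(\mathcal{L}_{h,\beta_0}^{\mathrm{eff},+})\,|\beta_k|^2\leq\lambda_n(\mathcal{L}_{h,\beta_0}^{\mathrm{eff},+})\,\|\varphi\|^2.
\]
Combining with the preceding norm identity,
\[
\widehat{q}^{\,b,\rho}_h(\psi)\leq\bigl(\lambda_n(\mathcal{L}_{h,\beta_0}^{\mathrm{eff},+})+\mathcal O(h^{2-\alpha-\eta})\bigr)\|\psi\|^2_{L^2(\hat a\,\mathrm{d}\tau\,\mathrm{d}s)}.
\]
The min--max principle applied to $\widehat{\mathcal{L}}^{\,b,\rho}_h$ on the $n$-dimensional subspace $F_h$ then yields $\widehat\mu_n(h,b,\rho)\leq\lambda_n(\mathcal{L}_{h,\beta_0}^{\mathrm{eff},+})+\mathcal O(h^{2-\alpha-\eta})$, which is the claim.

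I do not foresee a serious obstacle: all the analytic work (control of the weight, Feshbach projection, the bound on $q^{\mathrm{trans}}$ in the regime $b\leq c_2 h^{-\eta/2}$) has already been absorbed into Proposition \ref{A1}. The only mild point to check is that the remainder is truly $\mathcal O(h^{2-\alpha-\eta})$ in $L^2$-norm uniformly in the $\beta_k$, which is clear from the linearity observation above, and that the tensorized $\psi_k$ indeed belong to the form domain $\mathrm{Dom}(\widehat{q}^{\,b,\rho}_h)$ in \eqref{ay}, which follows from the regularity of $s\mapsto v_{\kappa(s),h}$ (cf.\ the inequality \eqref{RO}) and the $H^1$-regularity of $\varphi_k$ on the circle.
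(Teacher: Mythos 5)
Your proposal is correct and follows essentially the same route as the paper's proof: an $n$-dimensional trial space spanned by the tensor products $\varphi_k(s)\,v_{\kappa(s),h}(\tau)$, the upper bound of Proposition \ref{A1} applied to a general element of that span (which is again of the form \eqref{kiki}), the bound $q_{h,\beta_0}^{\mathrm{eff},+}(\varphi)\leq\lambda_n(\mathcal{L}_{h,\beta_0}^{\mathrm{eff},+})\|\varphi\|^2$ on the spectral subspace, and the min--max principle. Your explicit verification of the norm identity $\|\psi\|^2_{L^2(\hat a\,\mathrm{d}\tau\,\mathrm{d}s)}=\|\varphi\|^2$ via the normalization of $v_{\kappa(s),h}$ is a slightly cleaner way of establishing $\dim F_h=n$ than the paper's remark, but the argument is the same.
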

   \begin{proof}
   Let $(\varphi_{j})_{ 1\leq j\leq n}$ an orthonormal family of eigenvectors associated with eigenvalues\\ $(\lambda_{j}(\mathcal{L}_{h,\beta_{0}}^{\mathrm{eff},+}))_{1\leq j\leq n}\,.$
   Let $E$ the subspace of $\mathrm{Dom}(q_{h,\beta_{0}}^{\mathrm{eff},+})$ generated by the family $(\varphi_{j})_{ 1\leq j\leq n}\,.$
By Proposition  \ref{A1}, we have:  
    $$\textit{$\widehat{q}^{\,b,\rho}_{h}$}(v_{k(s),h}\,\varphi)\leq q_{h,\beta_{0}}^{\mathrm{eff},+}(\varphi)+\mathcal{O}(h^{2-\alpha-\eta})\|\varphi\|^{2}\,,\quad \quad \forall \varphi \in E\,.$$
 We use the min-max theorem and deduce
  $$q_{h,\beta_{0}}^{\mathrm{eff},+}(\varphi)\leq \lambda_{n}(\mathcal{L}_{h,\beta_{0}}^{\mathrm{eff},+})\|\varphi\|^{2} \,.$$  
  Consequently,
   \begin{align*}
    \textit{$\widehat{q}^{\,b,\rho}_{h}$}(v_{k(s),h}\,\varphi)&\leq  \lambda_{n}(\mathcal{L}_{h,\beta_{0}}^{\mathrm{eff},+})\|\varphi\|^{2}+\mathcal{O}(h^{2-\alpha-\eta})\|\varphi\|^{2}\\& \leq\Big( \lambda_{n}(\mathcal{L}_{h,\beta_{0}}^{\mathrm{eff},+})+\mathcal{O}(h^{2-\alpha-\eta})\Big)\,\|v_{k(s),h} \varphi\|^{2}\,.
   \end{align*}
 For all  $g\in v_{k(s),h}E\,,$ we have:
   $$\textit{$\widehat{q}^{\,b,\rho}_{h}$}(g)\leq\Big( \lambda_{n}(\mathcal{L}_{h,\beta_{0}}^{\mathrm{eff},+})+\mathcal{O}(h^{2-\alpha-\eta})\Big)\,\|g\|^{2}\,.$$
   Since $$\displaystyle{\int_{0}^{h^{-\rho}} }|v_{k(s),h}(\tau)|^{2}\hat{a}\mathrm{~d} \tau= 1\,,$$ 
 then the functions of the family $ (v_{k(s), h} \varphi_{j})_{1 \leq j \leq n} $ are linearly independent, so  $\mathrm{dim}(f_{k(s),h}E)=n\,.$ By application of the min-max principle, we obtain  $$ \widehat{\mu}_{n}(h,b,\rho)\leq  \lambda_{n}(\mathcal{L}_{h,\beta_{0}}^{\mathrm{eff},+})+\mathcal{O}(h^{2-\alpha-\eta}).$$
\end{proof} 
 \begin{corollary}
 \label{11b}
Let $n \in \mathbb{N}^{*},\,$ $\rho \in (0, 1/4) \,,$ $ 0 \leq \eta <\frac{3}{2}  \,,$ $\alpha>0\,$ and $0< c_{1}<c_{2}$\,. There exist  $h_{0}\,>0,\,$ such that, if $h\in(0,h_{0})\,$
  and $b$ satisfies
 $$c_{1}h^{\frac{-\eta}{2}}\leq b \leq c_{2}h^{\frac{-\eta}{2}}\,, $$   then we have :   
   $$\widehat{\mu}_{n}(h,b,\rho) \geq \lambda_{n}(\mathcal{L}_{h,\beta_{0}}^{\mathrm{eff},-})+\mathcal{O}(h^{2-\alpha-\eta})\,. $$
   \end{corollary}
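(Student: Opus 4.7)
The plan is to mirror the proof of Corollary~\ref{11a}, but now using the lower bound of Proposition~\ref{A10} and applying the min-max principle in the opposite direction. The subtlety compared to Corollary~\ref{11a} is that the right-hand side of Proposition~\ref{A10} contains an error term proportional to $\|\Pi^{\perp}_{s}\psi\|^{2}_{L^{2}(\hat a\,d\tau\,ds)}$, which must be controlled separately.

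First, I would let $(\psi_{j})_{1\leq j\leq n}$ be an orthonormal family of eigenfunctions of $\widehat{\mathcal L}^{\,b,\rho}_{h}$ associated with the first $n$ eigenvalues $\widehat\mu_{1}(h,b,\rho),\ldots,\widehat\mu_{n}(h,b,\rho)$, and for each $j$ define
$$\varphi_{j}(s)=\langle\psi_{j}(s,\cdot),v_{\kappa(s),h}\rangle_{L^{2}((0,h^{-\rho}),\hat a\,d\tau)},\qquad F=\mathrm{span}(\varphi_{1},\ldots,\varphi_{n}).$$
The first task is to show that the $\varphi_{j}$'s remain linearly independent, so that $\dim F=n$. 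For this I would establish an \emph{a priori} bound of the form $\|\Pi^{\perp}_{s}\psi_{j}\|^{2}_{L^{2}(\hat a\,d\tau ds)}=\mathcal{O}(h^{1/2})$. The argument: the tangential part of $\widehat q^{\,b,\rho}_{h}$ satisfies, by the min-max inequality on each fiber combined with \eqref{4.4} and \eqref{&é},
$$\widehat q^{\,b,\rho}_{h}(\psi_{j})\geq -(1+Ch^{1/2})\|\Pi_{s}\psi_{j}\|^{2}-Ch^{1/2-\rho}\|\Pi^{\perp}_{s}\psi_{j}\|^{2},$$
while Proposition~\ref{L2} together with the already-proven upper bound (Corollary~\ref{11a}) forces $\widehat\mu_{n}(h,b,\rho)\leq -1+Ch^{1/2}$. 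Comparing these inequalities gives $\|\Pi^{\perp}_{s}\psi_{j}\|^{2}\leq Ch^{1/2}\|\psi_{j}\|^{2}$, from which both the linear independence of the $\varphi_{j}$'s and an approximation $\|\psi_{j}\|^{2}=\|\varphi_{j}\|^{2}+\mathcal{O}(h^{1/2})$ follow using Corollary~\ref{AZ} and Lemma~\ref{A}.

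Next, apply the min-max principle to $\mathcal L_{h,\beta_{0}}^{\mathrm{eff},-}$ on the $n$-dimensional subspace $F$: there exists $\varphi\in F\setminus\{0\}$ with
$$q_{h,\beta_{0}}^{\mathrm{eff},-}(\varphi)\geq \lambda_{n}(\mathcal L_{h,\beta_{0}}^{\mathrm{eff},-})\,\|\varphi\|^{2}.$$
Lift this to the $n$-dimensional subspace $E=\mathrm{span}(\psi_{1},\ldots,\psi_{n})$ via $\psi=\sum c_{j}\psi_{j}$ where $\varphi=\sum c_{j}\varphi_{j}$. Because $\psi\in E$, the spectral decomposition gives $\widehat q^{\,b,\rho}_{h}(\psi)\leq \widehat\mu_{n}(h,b,\rho)\,\|\psi\|^{2}$. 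Combining with Proposition~\ref{A10} applied to $\psi$ yields
$$\widehat\mu_{n}(h,b,\rho)\,\|\psi\|^{2}\geq q_{h,\beta_{0}}^{\mathrm{eff},-}(\varphi)+\mathcal{O}(h^{2-\alpha-\eta})\|\varphi\|^{2}-\mathcal{O}(h^{-\eta+3/2-\alpha})\|\Pi^{\perp}_{s}\psi\|^{2}.$$
Using the \emph{a priori} bound $\|\Pi^{\perp}_{s}\psi\|^{2}=\mathcal{O}(h^{1/2})\|\psi\|^{2}$ established above, the last error term is absorbed into $\mathcal{O}(h^{2-\eta-\alpha})\|\psi\|^{2}$, and substituting the min-max bound on $q^{\mathrm{eff},-}(\varphi)$ together with $\|\varphi\|^{2}=\|\psi\|^{2}(1+\mathcal{O}(h^{1/2}))$ gives the desired conclusion after dividing by $\|\psi\|^{2}$.

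The main obstacle is the control of $\|\Pi^{\perp}_{s}\psi\|^{2}$ in Proposition~\ref{A10}'s error term, which a priori scales as $h^{-\eta+3/2-\alpha}$ and could destroy the estimate for large $\eta$. The resolution, as sketched above, is a bootstrap using the rough upper bound $\widehat\mu_{n}\leq -1+\mathcal{O}(h^{1/2})$ and the spectral gap estimate \eqref{&é} for $\mathcal H_{\kappa(s),h}$ to secure that $\Pi^{\perp}_{s}\psi$ is $\mathcal{O}(h^{1/4})$ in norm, which is exactly what is needed to absorb the dangerous term into the stated remainder $\mathcal{O}(h^{2-\alpha-\eta})$.
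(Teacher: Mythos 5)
Your route is genuinely different from the paper's and is essentially sound. The paper handles the dangerous term $-\mathcal O(h^{-\eta+3/2-\alpha})\|\Pi^{\perp}_{s}\psi\|^{2}$ by first restricting to $\alpha\leq\frac32-\eta$ (harmless, since the remainder $\mathcal O(h^{2-\alpha-\eta})$ only improves as $\alpha$ decreases), so that the coefficient of $\|\Pi^{\perp}_{s}\psi\|^{2}$ is bounded; the lower bound of Proposition~\ref{A10} then dominates the quadratic form of a direct-sum (``tensorized'') operator of the type $\mathcal L_{h}^{\mathrm{eff}}\oplus\bigl(-\tfrac{\epsilon_{0}}{2}\bigr)$, and the conclusion follows because the spectrum of that operator below $-\epsilon_{0}$ coincides with the spectrum of $\mathcal L_{h}^{\mathrm{eff}}$. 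You instead keep $\alpha$ arbitrary and neutralize the term through the a priori estimate $\|\Pi^{\perp}_{s}\psi\|^{2}=\mathcal O(h^{1/2})\|\psi\|^{2}$ on the span of the first $n$ eigenfunctions, obtained by bootstrapping from the rough upper bound $\widehat\mu_{n}(h,b,\rho)\leq-1+Ch^{1/2}$ (itself a consequence of Corollary~\ref{11a}) against the gap $\lambda_{2}(\mathcal H_{\kappa(s),h})\geq-Ch^{1/2-\rho}$; this also yields $\dim F=n$ for free. Both arguments work: yours costs the extra a priori estimate but avoids the auxiliary tensorized operator and is the more standard ``projection onto the ground band'' reduction.

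One step needs repair as written: in the final assembly you invoke $\|\varphi\|^{2}=\|\psi\|^{2}(1+\mathcal O(h^{1/2}))$. Multiplying $\lambda_{n}(\mathcal L^{\mathrm{eff},-}_{h,\beta_{0}})\approx-1$ by a factor $1+\mathcal O(h^{1/2})$ injects an error $\mathcal O(h^{1/2})$ into the eigenvalue bound, which is much larger than the target remainder $\mathcal O(h^{2-\alpha-\eta})$ whenever $\alpha+\eta<\frac32$ (e.g.\ $\eta=0$ and $\alpha$ small). The correct, cost-free observation is one-sided: since $v_{\kappa(s),h}$ is normalized in $L^{2}(\hat a\,\mathrm{d}\tau)$, one has exactly $\|\varphi\|^{2}_{L^{2}(\mathrm{d}s)}=\|\Pi_{s}\psi\|^{2}_{L^{2}(\hat a\,\mathrm{d}\tau\mathrm{d}s)}\leq\|\psi\|^{2}$, and since the coefficient $\lambda_{n}(\mathcal L^{\mathrm{eff},-}_{h,\beta_{0}})+\mathcal O(h^{2-\alpha-\eta})$ is negative, it follows that $\bigl(\lambda_{n}+\mathcal O(h^{2-\alpha-\eta})\bigr)\|\varphi\|^{2}\geq\bigl(\lambda_{n}+\mathcal O(h^{2-\alpha-\eta})\bigr)\|\psi\|^{2}$ with no loss. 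With this replacement your argument delivers the stated estimate.
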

   \begin{proof}
If $\eta< \frac32 $, there exist  $0<\alpha\leq \frac32-\eta$ then $-\eta+\frac{3}{2}-\alpha\geq 0 \,.$ According to Proposition \ref{A10}, there exist $\epsilon_{0}\in (0,1)$ such that, for all $\psi \in\mathrm{Dom}({\textit{$\widehat{q}^{\,b,\rho}_{h}$}})$ and $h$ small enough, we have:
   \begin{equation}
   \label{25}
       \textit{$\widehat{q}^{\,b,\rho}_{h}$}(\psi) \geq  Q_{h}^{tens}(\langle\psi,v_{\kappa(s),h} \rangle,\Pi^{\perp}\psi)\,,
   \end{equation}
   where, $\forall (\varphi,f)\in\mathrm{Dom}(q_{h,\beta_{0}}^{\mathrm{eff},-}) \times\mathrm{Dom}(\textit{$\widehat{q}^{b,\rho}_{h} $})$
    $$Q_{h}^{tens}(\varphi,f) =Q_{h}^{\text{eff}}(\varphi)-\dfrac{\epsilon_{0}}{2}\|f\|^{2}_{\textit{$L^{2}(\tilde{a}\mathrm{~d} \tau \mathrm{d} s)$}}\,,$$
     with $$ Q_{h}^{\text{eff}}(\varphi)=q_{h,\beta_{0}}^{\mathrm{eff},-}(\varphi)+\mathcal{O}(h^{2-\alpha-\eta}))\|\varphi\|_{\textit{$L^{2}(\mathbb{R}/2L\mathbb{Z})$}}^{2}\,.$$
   By application of the min-max principle (see \cite[Chapter 13]{6}), we have the comparison of the Rayleigh quotients:
    \begin{equation}
        \label{26}
  \widehat{\mu}_{n}(h,b,\rho) \geq  \widehat{\mu}_{n}^{tens}(h) \,.
    \end{equation}

We note that the self-adjoint operators associated with the quadratic forms $Q_{h}^{\mathrm{eff}}$  and $Q_{h}^{tens}$ are respectively $\mathcal{L}_{h}^{\text{eff}}\,$  and $\mathcal{L}_{h}^{tens}\,.$
 It is easy to see that the spectrum  of $\mathcal{L}_{h}^{tens}$ lying below $-\epsilon_{0}$ coincides with the spectrum of $\mathcal{L}_{h}^{\text{eff}}\,.$ Then, for all $n\in \mathbb{N}^{*}\,,$  $\widehat{\mu}_{n}^{tens}(h)$ and $\widehat{\mu}_{n}^{\text{eff}}(h)$
 are respectively the $n$-th eigenvalues of  $\mathcal{L}_{h}^{tens}$ and
 $\mathcal{L}_{h}^{\text{eff}}$ and satisfy
\begin{equation}
    \label{27}
    \widehat{\mu}_{n}^{tens}(h)=\widehat{\mu}_{n}^{\text{eff}}(h)   =\lambda_{n}(\mathcal{L}_{h,\beta_{0}}^{\mathrm{eff},-})+\mathcal{O}(h^{2-\alpha-\eta})\,.
\end{equation}
\end{proof}
\section{Spectrum of the effective operator}
\label{105}
Let $\lambda_{n}(\mathcal{L})$ be the $n$-th eigenvalue of an operator $\mathcal{L}\,.$ Fix a constant $c \in \mathbb{R}\,.$ The  effective operators $\mathcal{L}_{h,\beta_{0}}^{\mathrm{eff},\pm}$ are special cases of the following operator: 
$$\mathcal{L}_{h,\alpha,0}^{\mathrm{eff},b}=-h(1-c\,h^{\text{min}(\alpha,\frac{1}{2})})(\partial_{s}-ib\beta_{0})^{2}-1-\kappa(s) h^{\frac{1}{2}}-\frac{\kappa(s)^{2}}{2}h\,,$$
with $b>0$  and $\alpha>0\,.$  The associate quadratic form on $\textit{$L^{2}(\mathbb{R}/2L\mathbb{Z})$}$  is $$q_{h,\alpha,0}^{\mathrm{eff},b}(\varphi)=\displaystyle{\int^{L}_{-L}}\Big(-1-\kappa(s) h^{\frac{1}{2}}-\frac{\kappa(s)^{2}}{2}h \Big)\,|\varphi(s)|^{2}\mathrm{~d}s+h(1-c\,h^{\text{min}(\alpha,\frac{1}{2})})\displaystyle{\int^{L}_{-L}}|\,(\partial_{s}-ib\beta_{0})\varphi |^{2}\mathrm{~d}s\,.$$
In the following, we will explain how the spectrum of the operator $\mathcal{L}_{h,\alpha,0}^{\mathrm{eff},b}$ reduces to the study of that stated in Theorem \ref{R1}. The calculations in this section are classical. The operator that we are going to study its spectrum is a perturbation of the harmonic oscillator. Then, the calculations are based on the context of the harmonic approximation (cf.\cite[Section 4.3]{6}), with a small adjustment, since the operator is defined on a periodic space and with a phase term $ib\beta_{0}$.\\
Let $$\mathcal{L}_{h,\alpha,1}^{\mathrm{eff},b}=\mathcal{L}_{h,\alpha,0}^{\mathrm{eff},b}+1.$$
By definition of the spectrum, we have
\begin{equation}
\label{30}
    \lambda_{n}(\mathcal{L}_{h,\alpha,0}^{\mathrm{eff},b})=-1+\lambda_{n}(\mathcal{L}_{h,\alpha,1}^{\mathrm{eff},b})\,.
\end{equation}
Factoring the term $h^{-1/2}$, we get the new operator
$$\mathcal{L}_{h,\alpha,2}^{\mathrm{eff},b}=h^{-1/2}\mathcal{L}_{h,\alpha,1}^{\mathrm{eff},b}\,,$$
and
\begin{equation}
      \label{31}
       \lambda_{n}(\mathcal{L}_{h,\alpha,1}^{\mathrm{eff},b})=h^{1/2} \lambda_{n}(
  \mathcal{L}_{h,\alpha,2}^{\mathrm{eff},b})\,.
  \end{equation}
We introduce the  semi-classical parameter
  $$\hbar = h^{1/4}\,.$$

 \begin{lemma}
   \label{1²} 
Let us consider:
$$\mathcal{L}_{h,\alpha,3}^{\mathrm{eff},b}=-\hbar^{2}(1-c\,\hbar^{\min(4\alpha,2)})(\partial_{s}-ib\beta_{0})^{2}-\kappa(s)\,,$$
with the associated quadratic form on $\textit{$L^{2}(\mathbb{R}/2L\mathbb{Z})$}$, $$q_{h,\alpha,3}^{\mathrm{eff},b}(\varphi)=-\displaystyle{\int^{L}_{-L}}\kappa(s)\,|\varphi(s)|^{2}\mathrm{~d} s+\hbar^{2}(1-c\,\hbar^{\min(4\alpha,2)})\displaystyle{\int^{L}_{-L}}|\,(\partial_{s}-ib\beta_{0})\varphi |^{2}\mathrm{~d} s\,.$$
Then, we have
\begin{equation}
\label{312}
    \lambda_{n}(\mathcal{L}_{h,\alpha,2}^{\mathrm{eff},b})=\lambda_{n}(\mathcal{L}_{h,\alpha,3}^{\mathrm{eff},b})+\mathcal{O}(\hbar^{2})\,.
\end{equation}
\end{lemma}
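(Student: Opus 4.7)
The plan is purely computational followed by a one-line min-max comparison. First I would unwind the definitions to rewrite $\mathcal{L}_{h,\alpha,2}^{\mathrm{eff},b}$ explicitly with respect to the new parameter $\hbar = h^{1/4}$. Starting from $\mathcal{L}_{h,\alpha,1}^{\mathrm{eff},b} = -h(1-c\,h^{\min(\alpha,1/2)})(\partial_s-ib\beta_0)^2 - \kappa(s) h^{1/2} - \tfrac{\kappa(s)^2}{2} h$ and multiplying by $h^{-1/2}$, one gets
\[
\mathcal{L}_{h,\alpha,2}^{\mathrm{eff},b} = -h^{1/2}(1-c\,h^{\min(\alpha,1/2)})(\partial_s - ib\beta_0)^2 - \kappa(s) - \tfrac{\kappa(s)^2}{2} h^{1/2}.
\]
Substituting $h^{1/2}=\hbar^2$ and $h^{\min(\alpha,1/2)} = \hbar^{\min(4\alpha,2)}$ this becomes
\[
\mathcal{L}_{h,\alpha,2}^{\mathrm{eff},b} = \mathcal{L}_{h,\alpha,3}^{\mathrm{eff},b} - \tfrac{\kappa(s)^2}{2}\hbar^2.
\]

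Next I would read off that the difference $\mathcal{L}_{h,\alpha,2}^{\mathrm{eff},b} - \mathcal{L}_{h,\alpha,3}^{\mathrm{eff},b}$ is simply the multiplication operator by $-\tfrac{\kappa(s)^2}{2}\hbar^2$. Since $\partial\Omega$ is smooth and compact, $\kappa \in C^\infty(\mathbb{R}/2L\mathbb{Z})$ is bounded, so this multiplication operator is bounded on $L^2(\mathbb{R}/2L\mathbb{Z})$ with norm
\[
\left\|\tfrac{\kappa^2}{2}\hbar^2\right\|_\infty \le \tfrac{1}{2}\|\kappa\|_\infty^2\,\hbar^2 = \mathcal{O}(\hbar^2).
\]

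Finally, I would invoke the min-max principle: if $A$ and $B$ are self-adjoint with common form domain and $A = B + V$ with $\|V\|\le C\hbar^2$, then $|\lambda_n(A)-\lambda_n(B)|\le C\hbar^2$ for every $n$. Both $\mathcal{L}_{h,\alpha,2}^{\mathrm{eff},b}$ and $\mathcal{L}_{h,\alpha,3}^{\mathrm{eff},b}$ have the same form domain $H^1(\mathbb{R}/2L\mathbb{Z})$, are self-adjoint with compact resolvent, and differ by the bounded multiplication just estimated, so
\[
\lambda_n(\mathcal{L}_{h,\alpha,2}^{\mathrm{eff},b}) = \lambda_n(\mathcal{L}_{h,\alpha,3}^{\mathrm{eff},b}) + \mathcal{O}(\hbar^2),
\]
uniformly in $n$ (the constant depending only on $\|\kappa\|_\infty$). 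There is no real obstacle here; the lemma is essentially bookkeeping designed to isolate the harmonic-approximation problem of the next step, where one Taylor expands $\kappa$ around its maximum on the scale $\hbar$. The only thing to be careful about is to note explicitly that the gauge/phase term $ib\beta_0$ plays no role in this comparison since it appears identically in both operators.
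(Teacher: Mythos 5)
Your proposal is correct and follows essentially the same route as the paper: the difference of the two operators is the bounded multiplication by $-\tfrac{\kappa^{2}}{2}\hbar^{2}$, which is $\mathcal{O}(\hbar^{2})$ in norm since $\kappa$ is bounded, and the min-max principle then gives \eqref{312}. The paper phrases this at the level of quadratic forms rather than operators, but the content is identical.
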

\begin{proof}
Let $\varphi \in \mathrm{H}^{1}(\mathbb{R}/2L\mathbb{Z})$. We have
$$\bigg|q_{h,\alpha,2}^{\mathrm{eff},b}(\varphi)-q_{h,\alpha,3}^{\mathrm{eff},b}(\varphi)\bigg|=\hbar^{2} \bigg| \displaystyle{\int^{L}_{-L}}\frac{\kappa(s)^{2}}{2} \,|\varphi(s)|^{2}\,ds\bigg|\,.$$
Since  $\Omega$ is a bounded domain with a smooth boundary,  the curvature $\kappa$ is also bounded, hence there exists $ c>0 $ independent of $ s $, such that
$$\bigg|q_{h,\alpha,2}^{\mathrm{eff},b}(\varphi)-q_{h,\alpha,3}^{\mathrm{eff},b}(\varphi)\bigg|\leq c \,\hbar^{2} \|\varphi\|^{2}\,.$$
Applying the min-max principle, finishes the proof.
\end{proof}
From  \eqref{30}, \eqref{31} and \eqref{312}, we obtain
\begin{equation}
    \label{331}
    \lambda_{n}(\mathcal{L}_{h,\alpha,0}^{\mathrm{eff},b})=-1+\hbar^{2}\lambda_{n}(\mathcal{L}_{h,\alpha,3}^{\mathrm{eff},b})+\mathcal{O}(\hbar^{4})\,.
\end{equation}
Therefore, to study the spectrum of the effective operator $\mathcal{L}_{h,\alpha,0}^{\mathrm{eff},b}\,,$ it suffices to study the spectrum of the operator $\mathcal{L}_{h,\alpha,3}^{\mathrm{eff},b}\,,$ introduced in Lemma \ref{1²}.
\subsection{Localization near the point of maximum curvature}
Let $\kappa_{\text{max}}$ be the maximum curvature along the boundary $\Gamma\,$ is introduced in \eqref{NR}.
Consider the effective operator of the form:
$$\mathcal{L}_{h,\alpha,4}^{\mathrm{eff},b}=-\hbar^{2}(1-c\,\hbar^{\text{min}(4\alpha,2)})(\partial_{s}-ib\beta_{0})^{2}+\kappa_{\text{max}}-\kappa(s)\,,$$
and the quadratic form on $\textit{$L^{2}(\mathbb{R}/2L\mathbb{Z})$}$ associated to it,$$q_{h,\alpha,4}^{\mathrm{eff},b}(\varphi)=\displaystyle{\int^{L}_{-L}}\big(\kappa_{\text{max}}-\kappa(s) \big)\,|\varphi(s)|^{2}\mathrm{~d}s+\hbar^{2}(1-c\,\hbar^{\min(4\alpha,2)})\displaystyle{\int^{L}_{-L}}|\,(\partial_{s}-ib\beta_{0})\varphi |^{2}\mathrm{~d}s\,.$$
Notice that:
\begin{equation}
\label{32}
    \lambda_{n}(\mathcal{L}_{h,\alpha,3}^{\mathrm{eff},b})=\lambda_{n}(\mathcal{L}_{h,\alpha,4}^{\mathrm{eff},b})-\kappa_{\text{max}}\,.
\end{equation}
\begin{proposition}
\label{yeye}
Let us consider  the effective operator of the form
$$\mathcal{L}_{h,4}^{\mathrm{eff},b}=-\hbar^{2}(\partial_{s}-ib\beta_{0})^{2}+\kappa_{\text{max}}-\kappa(s)\,,$$
and the quadratic form on $\textit{$L^{2}(\mathbb{R}/2L\mathbb{Z})$}$ associated to it, $$q_{h,4}^{\mathrm{eff},b}(\varphi)=\displaystyle{\int^{L}_{-L}}\big(\kappa_{\text{max}}-\kappa(s) \big)\,|\varphi(s)|^{2}\mathrm{~d}s+\hbar^{2}\displaystyle{\int^{L}_{-L}}|\,(\partial_{s}-ib\beta_{0})\varphi |^{2}\mathrm{~d}s\,.$$
Then, we have $$ \lambda_{n}(\mathcal{L}_{h,\alpha,4}^{\mathrm{eff},b})= (1+\mathcal{O}(\,\hbar^{\min(4\alpha,2)})\lambda_{n}(\mathcal{L}_{h,4}^{\mathrm{eff},b})\,. $$
\end{proposition}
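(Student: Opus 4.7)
The proposition compares two operators that differ only in the multiplicative factor $(1-c\hbar^{\min(4\alpha,2)})$ in front of the (magnetic) kinetic term; the potential $\kappa_{\max}-\kappa(s)\geq 0$ is the same in both. The plan is to prove a two-sided bound via the min-max principle, using the non-negativity of the potential to control the kinetic energy by the total quadratic form. Set $\epsilon_h:=c\,\hbar^{\min(4\alpha,2)}$, and note that for $\hbar$ small we have $0<\epsilon_h<1$.

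First I would write, for every $\varphi\in H^1(\mathbb{R}/2L\mathbb{Z})$,
\[
q_{h,\alpha,4}^{\mathrm{eff},b}(\varphi)=q_{h,4}^{\mathrm{eff},b}(\varphi)-\epsilon_h\,\hbar^{2}\|(\partial_s-ib\beta_0)\varphi\|_{L^2(\mathbb{R}/2L\mathbb{Z})}^{2}.
\]
Since $\kappa_{\max}-\kappa(s)\geq 0$, the kinetic part is bounded by the whole form:
\[
\hbar^{2}\|(\partial_s-ib\beta_0)\varphi\|^{2}\le q_{h,4}^{\mathrm{eff},b}(\varphi).
\]
Plugging this back yields the two-sided pointwise comparison
\[
(1-\epsilon_h)\,q_{h,4}^{\mathrm{eff},b}(\varphi)\le q_{h,\alpha,4}^{\mathrm{eff},b}(\varphi)\le q_{h,4}^{\mathrm{eff},b}(\varphi),
\]
valid on the common form domain.

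The second step is a direct application of the min-max principle. Both quadratic forms are closed, semi-bounded, and share the form domain $H^1(\mathbb{R}/2L\mathbb{Z})$; the embedding $H^1\hookrightarrow L^2$ is compact, so the associated self-adjoint operators have purely discrete spectra and can be compared level-by-level. The pointwise inequality above gives
\[
(1-\epsilon_h)\,\lambda_n(\mathcal{L}_{h,4}^{\mathrm{eff},b})\le \lambda_n(\mathcal{L}_{h,\alpha,4}^{\mathrm{eff},b})\le \lambda_n(\mathcal{L}_{h,4}^{\mathrm{eff},b}),
\]
which is exactly $\lambda_n(\mathcal{L}_{h,\alpha,4}^{\mathrm{eff},b})=(1+\mathcal{O}(\hbar^{\min(4\alpha,2)}))\lambda_n(\mathcal{L}_{h,4}^{\mathrm{eff},b})$, proving the proposition.

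There is no real obstacle: the argument is a structural min-max comparison enabled by the positivity of $\kappa_{\max}-\kappa$. The only point to be careful about is that the constant $c$ in $\mathcal{L}_{h,\alpha,4}^{\mathrm{eff},b}$ could in principle have either sign (the proposition is stated for the class of operators arising as $\mathcal{L}_{h,\beta_0}^{\mathrm{eff},\pm}$); in the opposite-sign case one swaps the two chains of inequalities but the same bound on $\hbar^{2}\|(\partial_s-ib\beta_0)\varphi\|^{2}$ by $q_{h,4}^{\mathrm{eff},b}(\varphi)$ still yields the multiplicative error $1+\mathcal{O}(\hbar^{\min(4\alpha,2)})$.
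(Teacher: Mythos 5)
Your proposal is correct and follows essentially the same route as the paper: both bound the difference of the two quadratic forms by $|c|\,\hbar^{\min(4\alpha,2)}$ times the full form $q_{h,4}^{\mathrm{eff},b}$, using the non-negativity of $\kappa_{\max}-\kappa$, and then conclude by the min-max principle. Your closing remark about the sign of $c$ matches the paper's use of $|c|$ in the two-sided estimate.
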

\begin{proof}
Let $\varphi \in \mathrm{H}^{1}(\mathbb{R}/2L\mathbb{Z})$. We have:
\begin{align*}
   | q_{h,\alpha,4}^{\mathrm{eff},b}(\varphi)-q_{h,4}^{\mathrm{eff},b}(\varphi)|&=|c| \hbar^{\min(4\alpha,2)} \hbar^{2}\,\displaystyle{\int^{L}_{-L}}|\,(\partial_{s}-ib\beta_{0})\varphi |^{2}\mathrm{~d}s\\&\leq |c| \hbar^{\min(4\alpha,2)} \bigg[\hbar^{2}\,\displaystyle{\int^{L}_{-L}}|\,(\partial_{s}-ib\beta_{0})\varphi |^{2}\mathrm{~d}s+\displaystyle{\int^{L}_{-L}}\big(\kappa_{\text{max}}-\kappa(s) \big)\,|\varphi(s)|^{2}\mathrm{~d}s\bigg]\\&\leq |c| \hbar^{\min(4\alpha,2)} q_{h,4}^{\mathrm{eff},b}(\varphi)\,.
\end{align*}
  The conclusion of the lemma is now a simple application of the min-max principle.
\end{proof}
From \eqref{331}, \eqref{32} and Proposition \ref{yeye}, we obtain
$$ \lambda_{n}(\mathcal{L}_{h,\alpha,0}^{\mathrm{eff},b})=-1-\hbar^{2} \kappa_{\text{max}}+\hbar^{2}(1+\mathcal{O}(\,\hbar^{\min(4\alpha,2)})\lambda_{n}(\mathcal{L}_{h,4}^{\mathrm{eff,b}})+\mathcal{O}(\hbar^{4})\,.$$
In the following, we  show that $\lambda_{n}(\mathcal{L}_{h,4}^{\mathrm{eff},b})=\mathcal{O}(\hbar)\,$ then, we get ( recall that $\hbar=h^{1/4}$)
\begin{equation}
    \label{jiu}
    \lambda_{n}(\mathcal{L}_{h,\alpha,0}^{\mathrm{eff},b})=-1-\hbar^{2} \kappa_{\text{max}}+\hbar^{2}\lambda_{n}(\mathcal{L}_{h,4}^{\mathrm{eff},b})+o(\hbar^{3})=\lambda_{n}(\mathcal{L}_{h,\beta_{0}}^{\mathrm{eff}})+\tilde{\mathcal{O}}(h^{2-\eta})\,.
\end{equation}
Notice that Theorem \ref{R21} and the equality \eqref{jiu}, yields the proof of Theorem \ref{R1} which is a reformulation of Theorem \ref{Q2}.
\\

The goal is to show that the eigenfunctions of $\mathcal{L}_{h,4}^{\mathrm{eff},b}\,,$ associated with the eigenvalues of order $ \hbar $ concentrate exponentially near the point of maximum curvature.
\begin{proposition}
  \label{123}
For  $\epsilon \in (0,1)\,,$ there exist  $C>0$ and  $\hbar_{0}>0$ such that, for all $\hbar\in (0,\hbar_{0})$ and for all eigenfunctions $\psi$ of $\mathcal{L}_{h,,4}^{\mathrm{eff},b}\,,$ associated with the eigenvalues of orders $ \hbar \,, $ we have:
   $$ \|e^{\epsilon \phi_{0}/\hbar}\,\psi\|^{2}\leq C \|\psi\|^{2}\,\text{,}\,\,\,\,\,\,\,\,\, q_{h,4}^{\mathrm{eff},b}(e^{\epsilon \phi_{0}/\hbar}\,\psi) \leq C \hbar \|\psi\|^{2}\,, $$
   where \[\phi_{0}(s)=\min_{k\in \mathbb{Z}}\,\biggl|\displaystyle{\int^{s+2kL}_{0}} \sqrt{\kappa_{\text{max}}-\kappa(y)}  \mathrm{~d}y\biggr|\,.\]
\end{proposition}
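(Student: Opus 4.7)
The statement is a standard semiclassical Agmon-type decay estimate, adapted to the one-dimensional periodic magnetic operator $\mathcal{L}_{h,4}^{\mathrm{eff},b}$ on $L^2(\mathbb{R}/2L\mathbb{Z})$. The plan is: (i) establish an Agmon identity that is insensitive to the magnetic phase $b\beta_0$; (ii) apply it to the eigenfunction with weight $\chi=e^{\epsilon\phi_0/\hbar}$ and exploit the eikonal inequality $|\phi_0'|^2\leq\kappa_{\max}-\kappa$; (iii) close the estimate by a region-splitting argument using that the eigenvalue is of order $\hbar$.

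For step (i), the Leibniz rule $(\partial_s-ib\beta_0)(\chi u)=\chi'u+\chi(\partial_s-ib\beta_0)u$ valid for any real-valued Lipschitz periodic $\chi$, together with $\overline{\chi}=\chi$, yields after expanding $|(\partial_s-ib\beta_0)(\chi u)|^2$ that the imaginary cross-terms vanish upon taking real parts. Integrating by parts, I obtain
\begin{equation*}
q_{h,4}^{\mathrm{eff},b}(\chi u) - \hbar^2\|\chi' u\|^2 = \mathrm{Re}\,\langle \mathcal{L}_{h,4}^{\mathrm{eff},b} u,\, \chi^2 u\rangle
\qquad \forall u\in\mathrm{Dom}(\mathcal{L}_{h,4}^{\mathrm{eff},b})\,,
\end{equation*}
an identity independent of $b$. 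For step (ii), I take $u=\psi$ and use that $\phi_0$ is Lipschitz with $|\phi_0'|^2\leq\kappa_{\max}-\kappa$ almost everywhere (an immediate consequence of its integral definition). Hence $\hbar^2|\chi'|^2\leq\epsilon^2(\kappa_{\max}-\kappa)\chi^2$, and, since $\mathcal{L}_{h,4}^{\mathrm{eff},b}\psi=\lambda\psi$ with $|\lambda|\leq C\hbar$, the Agmon identity becomes
\begin{equation*}
\hbar^2\bigl\|(\partial_s-ib\beta_0)(\chi\psi)\bigr\|^2 + (1-\epsilon^2)\!\int_{-L}^{L}(\kappa_{\max}-\kappa)\,|\chi\psi|^2\,ds \;\leq\; \lambda\|\chi\psi\|^2\,.
\end{equation*}

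For step (iii), I split the circle into $\Omega_\hbar=\{\kappa_{\max}-\kappa\geq M\hbar\}$ and its complement, with $M$ chosen so that $(1-\epsilon^2)M\hbar - \lambda \geq \tfrac12(1-\epsilon^2)M\hbar$. On $\Omega_\hbar$ the integrand on the left is coercive, yielding an absorption of $\lambda\|\chi\psi\|_{L^2(\Omega_\hbar)}^2$; what remains after transposition is bounded by $\lambda\int_{\Omega_\hbar^c}|\chi\psi|^2$. On $\Omega_\hbar^c$, which shrinks to the maximum set as $\hbar\to0$, I control $\chi$ pointwise: $\kappa_{\max}-\kappa\leq M\hbar$ forces the arc-length distance to the maximum to be $\mathcal{O}(\hbar^{1/2})$ under \textbf{Assumption A}, and then $\phi_0(s)\leq\sqrt{M\hbar}\cdot|s|=\mathcal{O}(\hbar)$, so that $\chi\leq e^{\epsilon\sqrt{M}\cdot\mathcal{O}(1)}$ is bounded uniformly in $\hbar$. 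Thus $\int_{\Omega_\hbar^c}|\chi\psi|^2\leq C_0\|\psi\|^2$, and propagating this back through the inequality gives $\|\chi\psi\|^2\leq C\|\psi\|^2$; reinserting into the Agmon identity yields $q_{h,4}^{\mathrm{eff},b}(\chi\psi)\leq C\hbar\|\psi\|^2$.

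The main obstacle is the last pointwise bound on $\chi$ on the shrinking neighborhood $\Omega_\hbar^c$: it is here that one must convert the integral definition of $\phi_0$ into the uniform estimate $\phi_0/\hbar=\mathcal{O}(1)$. Under \textbf{Assumption A}, this follows immediately from the Taylor expansions $\kappa_{\max}-\kappa(s)\sim\tfrac{|\kappa''(0)|}{2}s^2$ and $\phi_0(s)\sim\tfrac{1}{2}\sqrt{|\kappa''(0)|/2}\,s^2$ near $s=0$; without that assumption one would carry out the same local analysis at each point of the compact set $\{\kappa=\kappa_{\max}\}$, which is the only place where extra care is required.
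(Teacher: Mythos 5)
Your proof is correct and follows essentially the same route as the paper: the same Agmon identity (with the magnetic cross-term killed by taking real parts), the same eikonal weight $\phi_0$ with $|\phi_0'|^2\leq\kappa_{\max}-\kappa$, and the same absorption argument exploiting $\lambda=\mathcal{O}(\hbar)$ and \textbf{Assumption A}. The only cosmetic difference is that you split the circle along the level set $\{\kappa_{\max}-\kappa\geq M\hbar\}$ while the paper splits along $\{|s|\geq C_{0}\hbar^{1/2}\}$; under the non-degeneracy hypothesis these regions are comparable and the two bookkeepings are interchangeable.
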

\begin{proof}
The proof consists of three steps. Let $\epsilon \in (0,1)\,.$ Consider an eigenvalue
 $\lambda=\mathcal{O}(\hbar)$ and an associated eigenfunction $\psi\,.$ \\ \\
\textbf{1. First step.} 
 \\ \\
 Let $\phi$ be a regular function, $2L$ periodic, with real value. We have:
\begin{align*}
    \mathcal{L}_{h,4}^{\mathrm{eff},b}(e^{ \phi}\,\psi)& =e^{ \phi} \mathcal{L}_{h,4}^{\mathrm{eff},b}(\psi)-\hbar^{2}\bigg( \psi\partial_{s}^{2}e^{\phi}+2\partial_{s}e^{\phi}\partial_{s}\psi-2ib\beta_{0}\psi\partial_{s}e^{\phi}\bigg)\,.\end{align*}
Then\begin{align*}
     q_{h,4}^{\mathrm{eff},b}(e^{ \phi}\psi)&=\langle\mathcal{L}_{h,4}^{\mathrm{eff},b}(e^{ \phi}\psi),e^{ \phi}\psi \rangle\\&=\langle e^{ \phi}\mathcal{L}_{h,4}^{\mathrm{eff,b}}(\psi),e^{ \phi}\psi \rangle-\hbar^{2}\Re\Bigg[\displaystyle{\int_{-L}^{L}}\bigg[ \psi\partial_{s}^{2}e^{\phi}+2\partial_{s}e^{\phi}\partial_{s}\psi-2ib\beta_{0}\psi\partial_{s}e^{\phi} \bigg] \bar{\psi}e^{\phi}\mathrm{~d}s\Bigg]\\ & =\lambda\|e^{\phi}\psi\|^{2}-\hbar^{2}\Re\Bigg[\displaystyle{\int_{-L}^{L}}\bigg[ \psi\partial_{s}^{2}e^{\phi}+2\partial_{s}e^{\phi}\partial_{s}\psi-2ib\beta_{0}\psi\partial_{s}e^{\phi} \bigg] \bar{\psi}e^{\phi}\mathrm{~d}s\Bigg]\,.
\end{align*}
By the formula of integration by parts and as $\phi$ is $2L$ periodic,  we have 
$$\Re\displaystyle{\int_{-L}^{L}}\bigg[ \psi\partial_{s}^{2}e^{\phi}+2\partial_{s}e^{\phi}\partial_{s}\psi-2ib\beta_{0}\psi\partial_{s}e^{\phi} \bigg] \bar{\psi}e^{\phi}\mathrm{~d}s=-\|\partial_{s}e^{\phi}\psi\|^{2}\,.$$
Then, we get 
\begin{equation}
    \label{33}
     q_{h,4}^{\mathrm{eff,b}}(e^{ \phi}\,\psi)= \lambda \|e^{ \phi}\,\psi\|^{2}+\hbar^{2}\|\partial_{s}e^{ \phi}\,\psi\|^{2}\,.
\end{equation}
\textbf{2. Second step.} 
\\

For  $\,\phi=\dfrac{\epsilon \phi_{0}}{\hbar}\,,$ we notice that:
\begin{equation}
    \label{41}
 |\phi^{'}|^{2}=\dfrac{\epsilon^{2} }{\hbar^{2}}|\phi_{0}^{'}|^{2}=\dfrac{\epsilon^{2} }{\hbar^{2}} \big(\kappa_{\text{max}}-\kappa(s)\big)\,. 
\end{equation}
%From \eqref{33} and \eqref{41}, we get 
We have
$$q_{h,4}^{\mathrm{eff},b}(\varphi)=\displaystyle{\int^{L}_{-L}}\big(\kappa_{\text{max}}-\kappa(s) \big)\,|\varphi(s)|^{2}\mathrm{~d}s+\hbar^{2}\displaystyle{\int^{L}_{-L}}|\,(\partial_{s}-ib\beta_{0})\varphi |^{2}\mathrm{~d}s\,.$$
Consequently
\begin{equation}
    \label{40}
    q_{h,4}^{\mathrm{eff},b}(\varphi)\geq   \displaystyle{\int^{L}_{-L}}\big(\kappa_{\text{max}}-\kappa(s) \big)\,|\varphi(s)|^{2}\,ds \,.
\end{equation}
From \eqref{33} and \eqref{40}, we have
\begin{align*}
 \displaystyle{\int^{L}_{-L}}\bigg[ \big(\kappa_{\text{max}}-\kappa(s) \big)-\lambda-\hbar^{2}|\phi^{'}|^{2}\bigg]\,e^{2 \phi}\,|\psi|^{2}\,ds
 \leq   0\,.
\end{align*}
From \eqref{41}, we get
\begin{align*}
 \displaystyle{\int^{L}_{-L}}\big[ 1-\epsilon^{2}\big]\big(\kappa_{\text{max}}-\kappa(s) \big)\,e^{2 \phi}\,|\psi|^{2}\mathrm{~d}s
 \leq \lambda \|e^{ \phi}\,\psi\|^{2}  \leq C \hbar \|e^{ \phi}\,\psi\|^{2}\,.
\end{align*}
We notice that for any $ \epsilon \in (0,1)$ and for $ \hbar $ small enough, there exists a constant $ C> 0 $ independent of $ \hbar $ such that
$$1-\epsilon^{2} \geq C \,. $$
We deduce the inequality
\begin{equation}
\label{38}
    \displaystyle{\int^{L}_{-L}}\big(  \kappa_{\text{max}}-\kappa(s)\big)\,e^{2 \phi}\,|\psi|^{2}\mathrm{~d}s
  \leq C \hbar \|e^{ \phi}\,\psi\|^{2}\,.
\end{equation}
From \eqref{38} and \eqref{41}, we have 
\begin{equation}
\label{37}
      q_{h,4}^{\mathrm{eff},b}(e^{ \phi}\,\psi)= \lambda \|e^{ \phi}\,\psi\|^{2}+\hbar^{2}\|\partial_{s}e^{ \phi}\,\psi\|^{2} \leq C \hbar  \|e^{ \phi}\,\psi\|^{2}\,.
\end{equation}
\textbf{3. Third step.} 
 \\ \\
Let us show that:
\begin{equation}
    \label{43}
    \displaystyle{\int_{-L}^{L}} e^{2\epsilon \phi_{0}/\hbar} |\psi|^{2}\mathrm{~d}s \leq C \|\psi\|^{2}\,,
\end{equation}
\begin{equation}
    \label{44}
   q_{h,4}^{\mathrm{eff},b}(e^{\epsilon \phi_{0}/\hbar}\,\psi) \leq C \hbar \|\psi\|^{2}\,.
\end{equation}
We use that $ u=\kappa_{\text{max}}-\kappa$ admits a unique non-degenerate minimum, we deduce that  for any $ C_{0}>0 $, there exist constants $ C, \hbar_{0} $ such that, for all $  \hbar \in (0, \hbar_{0}) \, $,
\begin{equation}
\label{45}
\int_{|s| \leq C_{0} \hbar^{1 / 2}} \big(\kappa_{\text{max}}-\kappa(s)\big) e^{2 \varepsilon \Phi_{0} / \hbar}|\psi|^{2} \mathrm{~d}s\leq C \hbar\|\psi\|^{2}\,,
\end{equation}
\begin{equation}
\label{46}
\int_{|s| \leq C_{0} \hbar^{1 / 2}}  e^{2 \varepsilon \Phi_{0} / \hbar}|\psi|^{2} \mathrm{~d}s\leq C \|\psi\|^{2}\,,
\end{equation}
\begin{equation}
\label{47}
\int_{|s|\geq C_{0} h^{1 / 2}} \big(\kappa_{\text{max}}-\kappa(s)\big)e^{2 \epsilon \phi_{0}/ \hbar}|\psi|^{2} \mathrm{~d}s \geq c \,C_{0}^{2} \hbar \int_{|s| \geq C_{0} \hbar^{1 / 2}} e^{2 \epsilon \phi_{0} / \hbar}|\psi|^{2} \mathrm{~d}s\,.
\end{equation} 
\\
By the inequality \eqref {46}, to show \eqref {43}, it suffices to show the following inequality:
\begin{equation}
    \label{48}
    \int_{|s| \geq C_{0} \hbar^{1 / 2}} e^{2 \epsilon \phi_{0} / \hbar}|\psi|^{2} \mathrm{~d}s \leq  C \|\psi\|^{2}\,.
\end{equation}
Taking $C_{0}$ large enough, we deduce that
\begin{align*}
    \int_{|s| \geq C_{0} \hbar^{1 / 2}} e^{2 \epsilon \phi_{0} / \hbar}|\psi|^{2} \mathrm{~d}s&\leq \dfrac{1}{c \,C_{0}^{2} \hbar} \int_{|s|\geq C_{0} h^{1 / 2}} u(s)e^{2 \epsilon \phi_{0}/ \hbar}|\psi)|^{2} \mathrm{~d}s \\&\leq \dfrac{1}{c \,C_{0}^{2} \hbar}\int_{-L}^{L} u(s)e^{2 \epsilon \phi_{0}/ \hbar}|\psi|^{2} \mathrm{~d}s \,.
    \end{align*}
    By inequality \eqref{38}, we get
    \begin{align*}
        \int_{|s| \geq C_{0} \hbar^{1 / 2}} e^{2 \epsilon \phi_{0} / \hbar}|\psi|^{2} ds &
 \leq \dfrac{C}{c \,C_{0}^{2}}\int_{-L}^{L} e^{2 \epsilon \phi_{0}/ \hbar}|\psi|^{2} \mathrm{~d}s \\ & \leq\dfrac{C}{c \,C_{0}^{2}}\int_{|s| \leq C_{0} \hbar^{1 / 2}} e^{2 \epsilon \phi_{0}/ \hbar}|\psi|^{2} \mathrm{~d}s + \dfrac{C}{c \,C_{0}^{2}}\int_{|s| \geq C_{0} \hbar^{1 / 2}} e^{2 \epsilon \phi_{0}/ \hbar}|\psi|^{2} \mathrm{~d}s \,.
\end{align*}
Thus,
$$ \bigg(1- \dfrac{C}{c \,C_{0}^{2}}\bigg)\int_{|s| \geq C_{0} \hbar^{1 / 2}} e^{2 \epsilon \phi_{0} / \hbar}|\psi|^{2} \mathrm{~d}s  \leq\dfrac{C}{c \,C_{0}^{2}}\int_{|s| \leq C_{0} \hbar^{1 / 2}} e^{2 \epsilon \phi_{0}/ \hbar}|\psi|^{2} \mathrm{~d}s\,.   $$ \\
 From the inequality \eqref{46}, we get \eqref{48} and hence we show the point \eqref {43}.\\
 From \eqref{37}, we have
  $$q_{h,4}^{\mathrm{eff},b}(e^{\epsilon \phi_{0}/\hbar}\,\psi) \leq C \hbar \|e^{\epsilon \phi_{0}/\hbar}\,\psi\|^{2}\,,$$
  and from the inequality \eqref{43}, we get the inequality \eqref {44}.
\end{proof}

We record the following simple corollary of Proposition \ref{123}\,.
\begin{corollary}
  \label{123'}
Let $\rho \in(0,\frac{1}{2})$, there exists  $C>0\,,$  $\epsilon \in (0,1)\,$ and  $\hbar_{0}>0$ such that, for all $h\in (0,\hbar_{0})$ and for all  normalized eigenfunctions $\psi$ of $\mathcal{L}_{h,4}^{\mathrm{eff},b}\,,$ associated with the eigenvalues of orders $ \hbar \,, $ we have:
$$\displaystyle{\int_{|s|\geq \hbar^{\frac{1}{2}-\rho}}} |\psi|^{2}ds \leq C \exp(-c\hbar^{-\epsilon})\,. $$
\end{corollary}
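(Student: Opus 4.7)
The plan is to derive the pointwise estimate directly from the weighted bound $\|e^{\epsilon\phi_{0}/\hbar}\psi\|^{2}\leq C\|\psi\|^{2}$ furnished by Proposition~\ref{123}. The only additional input needed is a lower bound on the Agmon distance $\phi_{0}(s)$ on the complement of a small neighborhood of the point $s=0$ of maximal curvature. Since $\|\psi\|=1$, once such a lower bound is available, the corollary is obtained by factoring the weight out of the integral.

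First I would establish the quadratic lower bound
\[
\phi_{0}(s)\geq c\,\min(s^{2},1)\qquad (s\in[-L,L]),
\]
for some $c>0$. Near $s=0$, \textbf{Assumption A} gives $\kappa_{\max}-\kappa(y)\geq c_{1}y^{2}$, hence $\sqrt{\kappa_{\max}-\kappa(y)}\geq c_{2}|y|$ and therefore $\phi_{0}(s)\geq c_{3}s^{2}$ for $|s|$ small. Away from $0$, the uniqueness of the maximum of $\kappa$ forces $\phi_{0}$ to be bounded below by a positive constant on every set $\{|s|\geq\delta\}$, so the quadratic bound, possibly with a smaller constant, extends to all of $[-L,L]$ up to a multiplicative modification by $\min(s^{2},1)$.

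Now take $\rho\in(0,\tfrac12)$ and set $\epsilon:=2\rho\in(0,1)$. For $\hbar$ sufficiently small one has $\hbar^{1-2\rho}\leq 1$, so on $\{|s|\geq\hbar^{1/2-\rho}\}$ the previous step yields
\[
\phi_{0}(s)\geq c\,\min(s^{2},1)\geq c\,\hbar^{1-2\rho},
\]
and consequently, with $\epsilon_{0}\in(0,1)$ the constant provided by Proposition~\ref{123},
\[
e^{2\epsilon_{0}\phi_{0}(s)/\hbar}\;\geq\; \exp\bigl(2c\,\epsilon_{0}\hbar^{-2\rho}\bigr) \;=\; \exp\bigl(2c\,\epsilon_{0}\hbar^{-\epsilon}\bigr).
\]
Multiplying the integrand $|\psi|^{2}$ on $\{|s|\geq\hbar^{1/2-\rho}\}$ by the harmless factor $e^{2\epsilon_{0}\phi_{0}/\hbar}\cdot e^{-2\epsilon_{0}\phi_{0}/\hbar}$ and using Proposition~\ref{123} gives
\[
\int_{|s|\geq\hbar^{1/2-\rho}}|\psi|^{2}\,ds\;\leq\;e^{-2c\,\epsilon_{0}\hbar^{-\epsilon}}\int_{-L}^{L}e^{2\epsilon_{0}\phi_{0}(s)/\hbar}|\psi(s)|^{2}\,ds\;\leq\;Ce^{-2c\,\epsilon_{0}\hbar^{-\epsilon}}\|\psi\|^{2},
\]
which is the claimed estimate (after renaming constants).

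The only delicate point is the global quadratic/constant lower bound for $\phi_{0}$. Since the arc-length parameter $s$ lies in a compact interval and $\kappa_{\max}-\kappa$ is continuous with a unique non-degenerate minimum, this reduces to an elementary compactness argument combined with the Taylor expansion at $s=0$; there is no real obstacle, only bookkeeping to make the constants uniform in $\hbar$ and in the periodic representative chosen in the definition of $\phi_{0}$.
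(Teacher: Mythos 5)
Your argument is correct and is exactly the derivation the paper intends: the corollary is stated there without proof as a ``simple corollary'' of Proposition~\ref{123}, and the standard route is precisely the one you take, namely a lower bound $\phi_{0}(s)\gtrsim\min(s^{2},1)$ (from the non-degeneracy of the maximum of $\kappa$ in \textbf{Assumption A} near $s=0$ and compactness away from it), which on $\{|s|\geq\hbar^{1/2-\rho}\}$ turns the weighted estimate $\|e^{\epsilon_{0}\phi_{0}/\hbar}\psi\|^{2}\leq C\|\psi\|^{2}$ into the claimed bound with $\epsilon=2\rho$. Your closing remark about handling the periodic representative in the definition of $\phi_{0}$ is the right (and only) point of bookkeeping.
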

\subsection{Reduction to a flux-free operator}
Consider the Dirichlet effective operator of the form:
$$\mathscr{L}_{h}^{\mathrm{eff}}=-\hbar^{2}\partial_{s}^{2}+\kappa_{\text{max}}-\kappa(s)\,,$$
with domain
$${\rm Dom}\big(\mathscr{L}_{h}^{\mathrm{eff}}\big)= \lbrace \, u\in H^{2}([ -L ,L\,]),\,\, u(-L \, )=u(L\,  )=0\,\rbrace \,,$$
and the quadratic form on $\textit{$L^{2}([-L,L])$}$ associated to it $$q_{h}^{\mathrm{eff}}(\varphi)=\displaystyle{\int^{L}_{-L}}\big(\kappa_{\text{max}}-\kappa(s) \big)\,|\varphi(s)|^{2}\mathrm{~d}s+\hbar^{2}\displaystyle{\int^{L}_{-L}}|\,\partial_{s}\varphi |^{2}\mathrm{~d}s\,.$$
Let $(\lambda_{n}(\mathscr{L}_{h}^{\mathrm{eff}}))_{n \in \mathbb{N}^{*}}\,$ be the sequence of eigenvalues of the operator  $\textit{$\mathscr{L}_{h}^{\mathrm{eff}}$ }$. As a consequence of Corollary \ref{123'}, for small $\hbar$ the eigenfunctions of $\mathcal{L}_{h,4}^{\mathrm{eff}}\,,$ associated with the eigenvalues of orders $ \hbar $ concentrate exponentially near the point of maximum curvature at the scale $\hbar^{1/2}\,.$ The following proposition allows the reduction of the analysis to the operator $\textit{$\mathcal{L}_{h,4}^{\mathrm{eff},b}$}\,.$ It is standard to deduce from the min-max principle and the Agmon estimates of
Proposition \ref{123}, the following proposition. This result quantifies the relation between the spectrum of this operator and that of the original operator.
The notation $\mathcal{O}(h^{\infty})$ indicates a quantity satisfying that,
for all $N\in \mathbb{N}$, there exists $C_{N}>0$ and $h_{N} > 0$ such that, for all $h\in(0,h_{N})$, $|\mathcal{O}(h^{\infty})|\leq C_{N} h_{N}\,.$
 \begin{proposition}
 \label{L2,}
Let $n \in \mathbb{N},$ there exist constant $\,\hbar_{0}\in(0,1)$ such that, for all $\, \hbar \in (0,\hbar_{0})\,,$ $ n \geq 1$ and $\lambda_{n}(\mathcal{L}_{h,4}^{\mathrm{eff},b}) =\mathcal{O}(\hbar)$,
 $$\lambda_{n}(\mathscr{L}_{h}^{\mathrm{eff}})  \leq \lambda_{n}(\mathcal{L}_{h,4}^{\mathrm{eff},b})+\mathcal{O}(h^{\infty})\,.$$
 Moreover, we have, for all $ n \geq 1\,,\,$ $\hbar>0$
 $$\lambda_{n}(\mathcal{L}_{h,4}^{\mathrm{eff},b}) \leq \lambda_{n}(\mathscr{L}_{h}^{\mathrm{eff}}) \,.$$
\end{proposition}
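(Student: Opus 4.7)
The two inequalities will be handled separately by test-function arguments for the min-max principle. The hard direction relies crucially on the Agmon localization from Proposition~\ref{123}.

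For the non-asymptotic bound $\lambda_n(\mathcal{L}_{h,4}^{\mathrm{eff},b}) \leq \lambda_n(\mathscr{L}_h^{\mathrm{eff}})$, the plan is to combine a global gauge transformation with extension by zero. Given an orthonormal family $(\varphi_j)_{1 \leq j \leq n}$ of eigenfunctions of $\mathscr{L}_h^{\mathrm{eff}}$ with Dirichlet boundary conditions at $\pm L$, I would extend each by zero to the circle $\mathbb{R}/2L\mathbb{Z}$ and set $\psi_j(s) = e^{ib\beta_0 s}\varphi_j(s)$. The Dirichlet condition $\varphi_j(\pm L) = 0$ guarantees $\psi_j \in H^1(\mathbb{R}/2L\mathbb{Z})$, and the gauge identity $(\partial_s - ib\beta_0)(e^{ib\beta_0 s}\varphi_j) = e^{ib\beta_0 s}\partial_s\varphi_j$ yields $q_{h,4}^{\mathrm{eff},b}(\psi_j) = q_h^{\mathrm{eff}}(\varphi_j)$ while preserving orthonormality. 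The min-max principle applied to the $n$-dimensional subspace $\mathrm{span}(\psi_1,\ldots,\psi_n)$ delivers the inequality.

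For the hard direction $\lambda_n(\mathscr{L}_h^{\mathrm{eff}}) \leq \lambda_n(\mathcal{L}_{h,4}^{\mathrm{eff},b}) + \mathcal{O}(h^\infty)$, I would use the inverse gauge transformation combined with a smooth cutoff. Take orthonormal eigenfunctions $(\psi_j)_{1 \leq j \leq n}$ of $\mathcal{L}_{h,4}^{\mathrm{eff},b}$ with eigenvalues of order $\hbar$, and fix $\chi \in C_c^\infty((-L,L))$ with $\chi \equiv 1$ on $[-L/2, L/2]$. Define $\tilde\varphi_j(s) = \chi(s)\, e^{-ib\beta_0 s}\psi_j(s)$, which lies in the form domain $H_0^1((-L,L))$ of $\mathscr{L}_h^{\mathrm{eff}}$. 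The gauge identity reduces matters to an IMS-type localization formula,
\[
q_h^{\mathrm{eff}}(\tilde\varphi_j) \;=\; q_{h,4}^{\mathrm{eff},b}(\chi\psi_j) \;=\; \lambda_j(\mathcal{L}_{h,4}^{\mathrm{eff},b})\,\|\chi\psi_j\|^2 + \hbar^2\,\|\chi'\psi_j\|^2,
\]
with analogous expansions for the off-diagonal inner products $\langle \tilde\varphi_j, \tilde\varphi_k\rangle$. The min-max principle applied to $\mathrm{span}(\tilde\varphi_1,\ldots,\tilde\varphi_n)$ then yields the desired estimate, provided the error terms are suitably small.

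The main obstacle is verifying that these error terms are truly $\mathcal{O}(h^\infty)$. This rests on the strict positivity of the Agmon weight $\phi_0$ away from $s=0$: on the support of $1-\chi$, which avoids a fixed neighborhood of $0$, we have $\phi_0 \geq c_0 > 0$ by \textbf{Assumption~A} and the choice of arc-length parametrization. Proposition~\ref{123} provides the boundedness of $\int e^{2\epsilon\phi_0/\hbar}|\psi_j|^2\, \mathrm{d}s$, hence $\int_{\mathrm{supp}(1-\chi)} |\psi_j|^2\, \mathrm{d}s \lesssim e^{-2\epsilon c_0/\hbar}$, and analogously for the gradient-weighted quantities controlling $\|\chi'\psi_j\|^2$. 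Since $h = \hbar^4$, exponential decay in $\hbar^{-1}$ is stronger than any polynomial in $h$, which yields the required $\mathcal{O}(h^\infty)$ error. This forces the Gram matrix of the $\tilde\varphi_j$ to be $I + \mathcal{O}(h^\infty)$ and the Rayleigh quotients of $\mathscr{L}_h^{\mathrm{eff}}$ on the subspace to be bounded above by $\lambda_n(\mathcal{L}_{h,4}^{\mathrm{eff},b}) + \mathcal{O}(h^\infty)$, closing the argument.
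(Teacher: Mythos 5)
Your proposal is correct and follows essentially the same route as the paper: for the non-asymptotic inequality the paper also gauges the Dirichlet eigenfunctions by $e^{ib\beta_0 s}$ and uses min-max, and for the $\mathcal{O}(h^\infty)$ direction it also takes $e^{-ib\beta_0 s}\chi\psi_{j,\hbar}$ as trial states, computes the commutator terms $-\hbar^2(2\partial_s\chi[\partial_s-ib\beta_0]\psi_{j,\hbar}+\psi_{j,\hbar}\partial_s^2\chi)$, and kills them with the Agmon estimates of Proposition~\ref{123} since $\mathrm{supp}\,\chi'$ stays away from the curvature maximum. Your IMS bookkeeping and your justification of the exponential smallness via the positivity of $\phi_0$ on $\mathrm{supp}(1-\chi)$ match the paper's argument.
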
 
\begin{proof}
For all  $N \geq 1\,,$ we can take a family of eigenvalues and eigenfunctions
$(\lambda_{n}(\mathcal{L}_{h,4}^{\mathrm{eff},b})_{n=1, \ldots, N}$ such that $ (\psi_{n, \hbar})_{n=1, \ldots, N}$ is an orthonormal family. 
Let us consider the cut off function $\chi$ define on $\mathbb{R}$ as following:
$$0 \leq \chi \leq1\,, \quad  \chi=1 \,\,\, \text{on}\,\, \bigg[-\dfrac{L}{2},\dfrac{L}{2}\bigg]\, \quad \text{and} \quad \chi=0\,\,\, \text{on}\,\,\, \mathbb{R}\setminus ]-L,L[\,.$$
For $k=1,...,N,$ we define the function
$$\Phi_{k,\hbar}(s) =e^{-ib\beta_{0}s}\chi(s)\psi_{j, \hbar}(s)\,.$$
We notice that
$$\Phi_{k,\hbar}(-L)=\Phi_{k,\hbar}(L)=0 \quad \text{and} \quad \Phi_{k,\hbar}\in H^{2}([ -L ,L\,])\,. $$
Let $\mathcal{E}_{N}(\hbar)$ the vector subspace of $\mathrm{Dom}(\mathscr{L}_{h}^{\mathrm{eff}})$ spanned by the family
$ (\Phi_{n, \hbar})_{n=1, \ldots, N}\,,$  and $u_{\hbar} \in F_{h} $ which is written as follows:
  $$u_{\hbar} =\sum_{k=1}^{N} \beta_{k} \Phi_{k, \hbar} \,.$$
  The functions $(\Phi_{n, \hbar}) $ are almost orthonormal. We note that $\dim(\mathcal{E}_{N}(\hbar))=N$ and 
  $$\|u_{\hbar} \|^{2}=\sum_{k=1}^{N} |\beta_{k}|^{2}+\mathcal{O}(h^{\infty} )\,.$$
Inserting $u_{h}$ into the quadratic  form $\textit{$q^{\mathrm{eff}}_{h} $}(u_{h})$ 
 \begin{align*}
 \textit{$q^{\mathrm{eff}}_{h} $}(u_{h})=\langle \textit{$ \mathscr{L}^{\mathrm{eff}}_{h} $ }u_{h},u_{h} \rangle=\sum_{j,k=1}^{N}\beta_{j}\beta_{k} \langle \textit{$ \mathscr{L}^{\mathrm{eff}}_{h} $ }\Phi_{j,\hbar},\Phi_{k,\hbar} \rangle\,.
 \end{align*}
 For $j$ fixed and for $ h $ small enough, we have
 \begin{align*}
  &\textit{$ \mathscr{L}^{\mathrm{eff}}_{h} $ }\Phi_{j,\hbar}\\&=
  e^{-ib\beta_{0}s}\big[ -\hbar^{2}(\partial_{s}-ib\beta_{0})^{2}\chi\psi_{j, \hbar} +(\kappa_{\text{max}}-\kappa(s))\chi\psi_{j, \hbar}\big]\\&=  e^{-ib\beta_{0}s}\big[ -\hbar^{2}\chi(s)(\partial_{s}-ib\beta_{0})^{2}\psi_{j, \hbar}+(\kappa_{\text{max}}-\kappa(s))\chi\psi_{j, \hbar}-2\hbar^{2}\partial_{s}\chi\partial_{s}\psi_{j, \hbar}-\hbar^{2}\psi_{j, \hbar}\partial_{s}^{2}\chi\\&\quad\quad\quad\quad\quad\quad\quad\quad\quad\quad\quad\quad\quad\quad\quad\quad\quad\quad\quad\quad\quad\quad\quad\quad\quad\quad\quad\quad\quad\quad\quad+2\hbar^{2}ib\beta_{0}\psi_{j, \hbar}\partial_{s}\chi \big] \\&=  e^{-ib\beta_{0}s}\big[ -\hbar^{2}\chi(\partial_{s}-ib\beta_{0})^{2}\psi_{j, \hbar}+(\kappa_{\text{max}}-\kappa(s))\chi\psi_{j, \hbar}-\hbar^{2}\big( 2 \partial_{s}\chi [\partial_{s}- ib\beta_{0}]\psi_{j, \hbar}+\psi_{j, \hbar}\partial_{s}^{2}\chi\big)\big]\\&= e^{-ib\beta_{0}s}\big[ \chi\mathcal{L}_{h,4}^{\mathrm{eff,b}}\psi_{j, \hbar} -\hbar^{2}\big( 2 \partial_{s}\chi [\partial_{s}- ib\beta_{0}]\psi_{j, \hbar}+\psi_{j, \hbar}\partial_{s}^{2}\chi\big)\big]\,.
 \end{align*}
 Consequently, for $j,k$ fixed and for $ h $ small enough, we have
 \begin{align*}
     \langle \textit{$ \mathscr{L}^{\mathrm{eff}}_{h} $ }\Phi_{j,\hbar} , \Phi_{k,\hbar}  \rangle&= \langle   \chi\mathcal{L}_{h,4}^{\mathrm{eff,b}}\psi_{j, \hbar},\chi\psi_{k, \hbar}\rangle-\hbar^{2}\langle \big( 2 \partial_{s}\chi [\partial_{s}- ib\beta_{0}]\psi_{j, \hbar}+\psi_{j, \hbar}\partial_{s}^{2}\chi\big),\chi\psi_{k, \hbar}\rangle\\&=\delta_{j,k} \lambda_{j}(\mathcal{L}_{h,4}^{\mathrm{eff,b}})-\hbar^{2}\langle \big( 2 \partial_{s}\chi [\partial_{s}- ib\beta_{0}]\psi_{j, \hbar}+\psi_{j, \hbar}\partial_{s}^{2}\chi\big),\chi\psi_{k, \hbar}\rangle\,.
 \end{align*}
 From Hölder's inequality and the Proposition \ref{123}, we obtain
 $$\hbar^{2}\big|\langle \big( 2 \partial_{s}\chi [\partial_{s}- ib\beta_{0}]\psi_{j, \hbar}+\psi_{j, \hbar}\partial_{s}^{2}\chi\big),\chi\psi_{k, \hbar}\rangle\big|=\mathcal{O}(\hbar^{\infty})\,.$$
  From the previous inequality, we get
  $$\langle \textit{$ \mathscr{L}^{\mathrm{eff}}_{h} $ }\Phi_{j,\hbar} , \Phi_{k,\hbar}  \rangle=\delta_{j,k} \lambda_{j}(\mathcal{L}_{h,4}^{\mathrm{eff},b})+\mathcal{O}(\hbar^{\infty})\,.$$
  Therefore,
 \begin{align*}
  \textit{$q^{\mathrm{eff}}_{h} $}(u_{h})&=\sum_{j=1}^{N}\lambda_{j}(\mathcal{L}_{h,4}^{\mathrm{eff},b})|\beta_{j}|^{2}+ \mathcal{O}(\hbar^{\infty})
  \sum_{j,k=1}^{N}\beta_{j}\beta_{k} \leq \bigg(\lambda_{N}(\mathcal{L}_{h,4}^{\mathrm{eff},b})+ o(\hbar)\,\bigg) \|u_{h}\|^{2}\,.
 \end{align*}
By the min-max theorem, we have
 $\,\,\,\,\lambda_{N}(\mathscr{L}_{h}^{\mathrm{eff}})\leq \lambda_{N}(\mathcal{L}_{h,4}^{\mathrm{eff},b})+\mathcal{O}(\hbar^{\infty})\,.$\\
 
  Now we determine an upper bound of $\lambda_{n}(\mathscr{L}_{h}^{\mathrm{eff}})$ in terms of $\lambda_{n}(\mathcal{L}_{h,4}^{\mathrm{eff},b})\,.$
  For all  $N \geq 1\,,$ we can take a family of eigenvalues and eigenfunctions
$(\lambda_{n}(\mathscr{L}_{h}^{\mathrm{eff}}))_{n=1, \ldots, N}$ such that $ (u_{n, \hbar})_{n=1, \ldots, N}$ is an orthonormal family. 
For $k=1,...,N,$ we define the function
$$\Psi_{k,\hbar}(s) =e^{ib\beta_{0}s}u_{k, \hbar}(s)\,.$$
We notice that
$$\Psi_{k,\hbar}(-L)=\Psi_{k,\hbar}(L)=0 \quad \text{and} \quad \Psi_{k,\hbar}\in H^{2}(] -L ,L\,[)\,. $$
Let $\mathcal{E}_{N}(\hbar)$ the vector subspace of $\mathrm{Dom}(\mathcal{L}_{h,4}^{\mathrm{eff},b})$ spanned by the family $ (\Psi_{n, \hbar})_{n=1, \ldots, N}\,.$  
  Let $\Psi \in \mathcal{E}_{N}(\hbar)$ which is written as follows:
  $$ \Psi =\sum_{k=1}^{N} \beta_{k} \Psi_{k, \hbar} =\sum_{k=1}^{N} \beta_{k}  e^{ib\beta_{0}s}u_{k, \hbar}(s)=e^{ib\beta_{0}s} u\,, \quad  u=\sum_{k=1}^{N} \beta_{k} u_{k, \hbar}(s)\,.$$
  We have 
  $$\mathcal{L}_{h,4}^{\mathrm{eff},b}\Psi=e^{ib\beta_{0}s}\,\mathscr{L}_{h}^{\mathrm{eff}} u\,,$$
  then 
  $$\langle \mathcal{L}_{h,4}^{\mathrm{eff},b}\Psi,\Psi  \rangle =\langle \mathscr{L}_{h}^{\mathrm{eff}} u,u \rangle \leq \lambda_{N}(\mathscr{L}_{h}^{\mathrm{eff}}) \|u\|^{2}= \lambda_{N}(\mathscr{L}_{h}^{\mathrm{eff}}) \|\Psi\|^{2}\,. $$
  By the min-max theorem, we get the non-asymptotic inequality
   $$\lambda_{N}(\mathcal{L}_{h,4}^{\mathrm{eff},b}) \leq \lambda_{N}(\mathscr{L}_{h}^{\mathrm{eff}}) \,.$$
\end{proof}
\subsection{End of the proof}
In this section we will construct trial states $(\Phi_{n})$ in the domain of the effective operator $\mathscr{L}_{h}^{\mathrm{eff}}\,,$ 
such that, for every fixed $n \in \mathbb{N}^{*}$  we have\\
\begin{equation}
\label{49}
\bigg\|\mathscr{L}_{h}^{\mathrm{eff}} \Phi_{n}-(2n-1) \hbar \sqrt{\frac{-\kappa^{\prime \prime}(0)}{2}}  \Phi_{n}\bigg\|_{\textit{$L^{2}(\mathbb{R}/2L\mathbb{Z})$}}\leq C \hbar^{3/2} \,.
\,.
\end{equation}
We know that, the eigenfunctions are localized near the boundary at the scale $\hbar^{\frac{1}{2}}$ (cf. Prop \ref{123};the same proof can work for $\mathscr{L}_{h}^{\mathrm{eff}} $) . It suggests to introduce the rescaling
\begin{equation}
    \label{111}
   s= \hbar^{1/2} \sigma\,. 
\end{equation}
The effective operator $\mathscr{L}_{h}^{\mathrm{eff}} \,$ becomes
$$\mathscr{L}_{h}^{\mathrm{eff}} =-\hbar\, \partial_{\sigma}^{2}+\kappa_{\max}-\kappa(\hbar^{\frac{1}{2}} \sigma)\,.$$
%and the integration domain becomes 
%$$I=[-\hbar^{-\frac{1}{2}}L, \hbar^{-\frac{1}{2}}L[\,.$$
Recall that the value of the maximum curvature is $\kappa_{\max}=\kappa(0)$. We can use a  Taylor expansion of $\kappa$ near $0\,,$ as follows
$$\kappa(\hbar^{\frac{1}{2}} \sigma)=\kappa_{\max}+\dfrac{\hbar \sigma^{2}}{2}\kappa^{\prime \prime}(0)+\hbar^{\frac{3}{2}} q(\sigma)\,,
$$
where the functions $q$ satisfy for $|\sigma|=\mathcal{O}(\hbar^{-\frac{1}{2}})$
\begin{equation}
    \label{q}
   |q(\sigma)\leq C|\sigma|^{3}\,. 
\end{equation}
%We consider the formal expansion of the operator $\mathscr{L}_{h}^{\mathrm{eff}}$
%\begin{align*}
  %-\hbar\, \partial_{\sigma}^{2}-\dfrac{\hbar \sigma^{2}}{2}\kappa^{\prime \prime}(0)-\hbar^{\frac{3}{2}} q(\sigma)
%\end{align*}
Consequently, we write the effective operator $\mathscr{L}_{h}^{\mathrm{eff}}\,$ as
\begin{equation}
    \label{50}
  \hbar^{-1}\mathscr{L}_{h}^{\mathrm{eff}}=- \partial_{\sigma}^{2}-\dfrac{ \sigma^{2}}{2}\kappa^{\prime \prime}(0)-\hbar^{\frac{1}{2}} q(\sigma)\,.
\end{equation}
Since  $\kappa(0)$ is the non-degenerate maximum of $\kappa$, we have $\kappa^{\prime \prime}(0)<0$. The eigenvalues of the harmonic oscillator $H_{\text{harm}}$
\begin{equation}
    \label{harmonique}
    -\partial_{\sigma}^{2}+\dfrac{-\kappa^{\prime \prime}(0)}{2} \sigma^{2} \quad \text { in }  L^{2}(\mathbb{R})
\end{equation}
are  $(2 n-1) \sqrt{\dfrac{-\kappa^{\prime \prime}(0)}{2}}$ with $n \in \mathbb{N}^{*}\,.$
The corresponding normalized eigenfunctions are denoted by $f_{n}(\sigma)$. They have the form
\begin{equation}
    \label{51}
    f_{n}(\sigma)=h_{n}(\sigma) \exp \left(-\sqrt{-\frac{k^{\prime \prime}(0)}{2}} \frac{\sigma^{2}}{2}\right)\,,
\end{equation}
where the  $(h_{n}(\sigma))_{n\geq1}$ are the rescaled Hermite polynomials.\\

Define the trial state  $\Phi_{n}$ in  $\textit{$L^{2}(]-L,L[)$}$ as follows:
\begin{equation}
    \label{52}
   \Phi_{n}(s)=\hbar^{-1/4}\,\chi\left(\dfrac{s}{2L} \right) f_{n}(\hbar^{-1/2}s)=\hbar^{-1/4}\,u_{n}(s)\,, 
\end{equation}
where $\chi$ is a cut-off function defined on $\mathbb{R}$ by:
 $$0\leq \chi\leq1 \,\,,\,\, \chi=1 \,\,\text{ in } \,\,]-1/4, 1/4\,]\,\, \text{ and } \,\,\chi=0\,\, \text{ in } \,\,\mathbb{R}\backslash]-1/2,1/2[,$$
 and $$u_{n}(s)= \chi\left(\dfrac{s}{2L} \right) f_{n}(\hbar^{-1/2}s)\,.$$
Using the exponential decay of $f_{n}$ at infinity, it is
easy to verify
\begin{equation}
    \label{53}
    \left\|\Phi_{n}\right\|_{\textit{$L^{2}(]-L,L[)$}}=1+\mathcal{O}(\hbar^{\infty})\,.
\end{equation}
Then, we observe that after a change of variables and by construction of $\Phi_{n},$ \eqref{50} yields
\begin{equation}\label{54}
\begin{aligned}
    &\hbar^{-1}\mathscr{L}_{h}^{\mathrm{eff}}\Phi_{n}(s)\\&=-\hbar^{-1/4}\Bigg[\partial_{\sigma}^{2}u_{n}(\sigma)
 -\dfrac{ \sigma^{2}}{2}\kappa^{\prime \prime}(0)u_{n}(\sigma)-\hbar^{\frac{1}{2}} q(\sigma)u_{n}(\sigma)\Bigg]\\&=-\hbar^{-1/4}\Bigg[ \chi\left(\dfrac{\hbar^{1/2}\sigma}{2L} \right) \partial_{\sigma}^{2}f_{n}(\sigma)+2\hbar^{\frac{1}{2}} \chi^{\prime}\left(\dfrac{\hbar^{\frac{1}{2}}\sigma}{2L} \right) f^{\prime}_{n}(\sigma)+\hbar \chi^{\prime\prime}\left(\dfrac{\hbar^{\frac{1}{2}}\sigma}{2L} \right) f_{n}(\sigma)\\&\quad 
 -\dfrac{ \sigma^{2}}{2}\kappa^{\prime \prime}(0)u_{n}(\sigma)-\hbar^{\frac{1}{2}} q(\sigma)u_{n}(\sigma)\Bigg]\\&= -\hbar^{-1/4}\Bigg[ -(2n-1) \sqrt{\frac{-\kappa^{\prime \prime}(0)}{2}}u_{n}(\sigma)+
2\hbar^{\frac{1}{2}} \chi^{\prime}\left(\dfrac{\hbar^{\frac{1}{2}}\sigma}{2L} \right) f^{\prime}_{n}(\sigma)+\hbar \chi^{\prime\prime}\left(\dfrac{\hbar^{\frac{1}{2}}\sigma}{2L} \right) f_{n}(\sigma) \\&\quad
 -\hbar^{\frac{1}{2}} q(\sigma)u_{n}(\sigma)
    \Bigg]\\&=(2n-1) \sqrt{\frac{-\kappa^{\prime \prime}(0)}{2}}  \Phi_{n}(s) -\hbar^{-1/4}\Bigg[ 
\bigg(2\hbar^{\frac{1}{2}} \chi^{\prime}\left(\dfrac{\hbar^{\frac{1}{2}}\sigma}{2L} \right) f^{\prime}_{n}(\sigma)+\hbar \chi^{\prime\prime}\left(\dfrac{\hbar^{\frac{1}{2}}\sigma}{2L} \right) f_{n}(\sigma) \bigg)
 \\&\quad-\hbar^{\frac{1}{2}} q(\sigma)u_{n}(\sigma)
    \Bigg]\,.
\end{aligned}
\end{equation}
Using  the exponential decay of  $f_{n}$ at infinity, we infer from \eqref{54} that
$$\bigg\|\hbar^{-1}\mathscr{L}_{h}^{\mathrm{eff}} \Phi_{n}-(2n-1)  \sqrt{\frac{-\kappa^{\prime \prime}(0)}{2}}  \Phi_{n}\bigg\|_{\textit{$L^{2}(]-L,L[)$}}\leq C \hbar^{1/2} \,.$$

 As a consequence of the spectral theorem, \eqref{49} yields that, for
every fixed $n \in \mathbb{N}^{*}$,  there exists an eigenvalue $\Tilde{\lambda}_{n}(\mathscr{L}_{h}^{\mathrm{eff}})$ of the operator $\mathscr{L}_{h}^{\mathrm{eff}}$ such that
\begin{equation}
    \label{55}
  \Tilde{\lambda}_{n}(\mathscr{L}_{h}^{\mathrm{eff}})=  (2n-1) \hbar \sqrt{\frac{-\kappa^{\prime \prime}(0)}{2}} +\mathcal{O}(\hbar^{3/2})\,.
\end{equation}
Now, we proved by finding lower bounds.
\begin{proposition}
  \label{a²}
  For all $n \geq 1\,$ there exist $h_{0}>0$ such that, for all  $h\in (0,h_{0})$
   $$\lambda_{n}(\mathscr{L}_{h}^{\mathrm{eff}}) \geq (2n-1) \hbar \sqrt{\frac{-\kappa^{\prime \prime}(0)}{2}} +\mathcal{O}(\hbar^{3/2})\,.$$
\end{proposition}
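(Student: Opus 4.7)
The strategy is a harmonic approximation lower bound matching the upper bound just established: after the rescaling $s = \hbar^{1/2}\sigma$ the operator $\mathscr{L}_{h}^{\mathrm{eff}}$ becomes, up to an $\hbar^{1/2}$-perturbation, the harmonic oscillator $H_{\text{harm}}$ of \eqref{harmonique}, and one uses the first $n$ eigenfunctions of $\mathscr{L}_{h}^{\mathrm{eff}}$ as a test family in the min-max characterization of $\lambda_{n}(H_{\text{harm}})$.

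\textbf{Step 1 (localization and rescaling).} Let $(\psi_{k})_{1 \leq k \leq n}$ be an orthonormal family of eigenfunctions of $\mathscr{L}_{h}^{\mathrm{eff}}$ associated with $\lambda_{1}(\mathscr{L}_{h}^{\mathrm{eff}}), \ldots , \lambda_{n}(\mathscr{L}_{h}^{\mathrm{eff}})$. Since $\mathscr{L}_{h}^{\mathrm{eff}}$ has no magnetic phase, the Agmon argument of Proposition \ref{123} applies verbatim and delivers $\int e^{2\epsilon \phi_{0}(s)/\hbar}|\psi_{k}(s)|^{2}\, ds \leq C$ for some $\epsilon \in (0,1)$. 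Under \textbf{Assumption A}, the Agmon distance satisfies $\phi_{0}(s) \geq c_{0}\, s^{2}$ near $0$, so after setting $\tilde{\psi}_{k}(\sigma) := \hbar^{1/4}\psi_{k}(\hbar^{1/2}\sigma)$ (extended by zero outside $[-L\hbar^{-1/2}, L\hbar^{-1/2}]$) the weight becomes a fixed Gaussian in the $\sigma$ variable. Hence $(\tilde{\psi}_{k})$ is orthonormal in $L^{2}(\mathbb{R})$, lies in the form domain of $H_{\text{harm}}$, and every polynomial moment $\int |\sigma|^{m}|\tilde{\psi}_{k}|^{2}\, d\sigma$ is bounded uniformly in $\hbar$.

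\textbf{Step 2 (identity and min-max).} Using the Taylor expansion $\kappa_{\max} - \kappa(\hbar^{1/2}\sigma) = -\frac{\kappa''(0)}{2}\hbar\sigma^{2} - \hbar^{3/2}q(\sigma)$ with $|q(\sigma)| \leq C|\sigma|^{3}$ and changing variables in the quadratic form give, for all $j,k$,
\begin{equation*}
\lambda_{k}(\mathscr{L}_{h}^{\mathrm{eff}})\,\delta_{jk} \;=\; \langle \mathscr{L}_{h}^{\mathrm{eff}}\psi_{k}, \psi_{j}\rangle \;=\; \hbar\,\langle H_{\text{harm}}\tilde{\psi}_{k}, \tilde{\psi}_{j}\rangle \;-\; \hbar^{3/2}\int_{\mathbb{R}} q(\sigma)\,\tilde{\psi}_{k}\,\overline{\tilde{\psi}_{j}}\, d\sigma .
\end{equation*}
By Cauchy-Schwarz and the moment bounds of Step 1, the integral is $\mathcal{O}(1)$, so for any $\tilde{\psi} = \sum_{k=1}^{n} c_{k}\tilde{\psi}_{k}$,
\begin{equation*}
\langle H_{\text{harm}} \tilde{\psi}, \tilde{\psi}\rangle \;=\; \hbar^{-1}\sum_{k=1}^{n}|c_{k}|^{2}\lambda_{k}(\mathscr{L}_{h}^{\mathrm{eff}}) \;+\; \mathcal{O}(\hbar^{1/2})\|\tilde{\psi}\|^{2} \;\leq\; \hbar^{-1}\lambda_{n}(\mathscr{L}_{h}^{\mathrm{eff}})\,\|\tilde{\psi}\|^{2} + \mathcal{O}(\hbar^{1/2})\|\tilde{\psi}\|^{2}.
\end{equation*}
Since $V_{h} := \mathrm{span}(\tilde{\psi}_{1}, \ldots , \tilde{\psi}_{n})$ has dimension $n$ and lies in the form domain of $H_{\text{harm}}$, the min-max principle on $L^{2}(\mathbb{R})$ yields
\begin{equation*}
(2n-1)\sqrt{\frac{-\kappa''(0)}{2}} \;=\; \lambda_{n}(H_{\text{harm}}) \;\leq\; \sup_{\tilde{\psi} \in V_{h}\setminus\{0\}}\frac{\langle H_{\text{harm}}\tilde{\psi}, \tilde{\psi}\rangle}{\|\tilde{\psi}\|^{2}} \;\leq\; \hbar^{-1}\lambda_{n}(\mathscr{L}_{h}^{\mathrm{eff}}) + \mathcal{O}(\hbar^{1/2}),
\end{equation*}
and multiplying by $\hbar$ gives the announced lower bound.

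\textbf{Main obstacle.} The delicate point is controlling the cubic error $\hbar^{1/2}q(\sigma)$ in the quadratic form of $H_{\text{harm}}$ applied to the $\tilde{\psi}_{k}$. This requires the polynomial moment bounds $\int|\sigma|^{m}|\tilde{\psi}_{k}|^{2}\, d\sigma = \mathcal{O}(1)$, which rely crucially on the Agmon estimate translating into Gaussian concentration at the natural scale $\sigma = \mathcal{O}(1)$. Without this, the cross terms in the expansion of $\langle H_{\text{harm}}\tilde{\psi},\tilde{\psi}\rangle$ could grow with $\hbar$ and the remainder $\mathcal{O}(\hbar^{3/2})$ could not be obtained; once the moment bounds are in hand, the rest is a routine application of min-max.
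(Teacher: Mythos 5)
Your proposal is correct and follows essentially the same route as the paper: both use the first $n$ eigenfunctions of $\mathscr{L}_{h}^{\mathrm{eff}}$ (extended by zero, rescaled by $s=\hbar^{1/2}\sigma$) as an $n$-dimensional test space for $H_{\text{harm}}$, control the cubic Taylor remainder $\hbar^{3/2}q(\sigma)$ via the Agmon estimates of Proposition \ref{123}, and conclude by the min-max principle. Your Step 2, phrased as an exact matrix identity in $j,k$, is just a slightly more explicit version of the paper's direct bound on the Rayleigh quotient over the span.
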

\begin{proof}
For all  $N \geq 1\,,$ we can take a family of eigenvalues and eigenfunctions
\\$(\lambda_{n}(\mathscr{L}_{h}^{\mathrm{eff}}), \psi_{n, \hbar})_{n=1, \ldots, N}$ such that $ (\psi_{n, \hbar})_{n=1, \ldots, N}$ is an orthonormal family. Let $\mathcal{E}_{N}(\hbar)$ the vector subspace of $\mathrm{Dom}(\mathscr{L}_{h}^{\mathrm{eff}})$ of dimension $N$ spanned by the family
$ (\psi_{n, \hbar})_{n=1, \ldots, N}\,.$ \\
It is rather easy to observe that, 
for $\psi \in \mathcal{E}_{N}(\hbar)$
$$ \langle \mathscr{L}_{h}^{\mathrm{eff}} \psi,\psi\rangle \leq \lambda_{N}(\mathscr{L}_{h}^{\mathrm{eff}} ) \|\psi\|^{2}_{\textit{$L^{2}(]-L,L[)$}}\,.  $$  Since $\psi(-L)=\psi(L)=0, $ the following function is in $H^{1}(\mathbb{R})$
  $$\tilde{\psi}= \begin{cases} \psi & \text{if $s \in ]-L,L[$ } \\ 0 & \text{if $s \in \mathbb{R}\setminus ]-L,L[ $} \,. \end{cases}$$ 
  For $N \geq1,$ let 
  $$\tilde{\mathcal{E}}_{N}(\hbar)=\text{Span}\lbrace \tilde{\psi}_{n, \hbar}\rbrace_{n=1, \ldots, N} \,. $$
  Since $\text{dim}(\mathcal{E}_{N}(\hbar))=N,$ then $\text{dim}(\tilde{\mathcal{E}}_{N}(\hbar))=N\,.$ We notice that, for all $\psi \in \mathcal{E}_{N}(\hbar)$, $\psi$ also satifies  the Agmon estimates (Proposition  \ref{123}). Using the change of variable introduced in \eqref{111}, we easily obtain that, there exists $ C>0 $ such that,   for all $ \psi \in \mathcal{E}_{N} (\hbar) \, $ 
  $$\displaystyle{\int_{-\hbar^{-\frac{1}{2}}L}^{\hbar^{-\frac{1}{2}}L}}|\sigma|^{3}|\psi(\sigma)|^{2}\mathrm{~d}(\hbar^{1/2}\,\sigma)\leq C \hbar^{\frac{3}{2}} \|\psi\|^{2}_{\textit{$L^{2}(]-L,L[)$}}\,.$$
  Consequently, we get
\begin{align*}
   &\lambda_{N}(\mathscr{L}_{h}^{\mathrm{eff}} ) \|\psi\|^{2}_{\textit{$L^{2}(]-L,L[)$}}\\ &\geq \langle \mathscr{L}_{h}^{\mathrm{eff}} \psi,\psi\rangle\\ &\geq \displaystyle{\int_{\mathbb{R}}}\hbar\, \dfrac{-\kappa^{\prime \prime}(0) }{2}\sigma^{2}\,|\tilde{\psi}( \hbar^{1/2} \sigma)|^{2}\mathrm{~d}(\hbar^{1/2}\,\sigma)+\hbar\displaystyle{\int_{\mathbb{R}}}|\, \partial_{\sigma}\tilde{\psi} |^{2}\mathrm{~d}(\hbar^{1/2}\,\sigma) - C\hbar^{3/2}\|\psi\|^{2}_{\textit{$L^{2}(]-L,L[)$}} \\ & \geq \hbar \bigg[ \displaystyle{\int_{\mathbb{R}}} \bigg(|\,\partial_{\sigma}\tilde{\psi}( \hbar^{1/2} \sigma) |^{2} +
\, \dfrac{-\kappa^{\prime \prime}(0) }{2}\sigma^{2}\,|\tilde{\psi}( \hbar^{1/2} \sigma)|^{2}\,  \bigg)\mathrm{~d}(\hbar^{1/2}\,\sigma)\bigg]- C\hbar^{3/2}\|\psi\|^{2}_{\textit{$L^{2}(]-L,L[)$}}\,.
\end{align*}
This becomes
\begin{align*}
    \hbar \bigg[ \displaystyle{\int_{\mathbb{R}}} \bigg(|\,\partial_{\sigma}\tilde{\psi}( \hbar^{1/2} \sigma) |^{2} +
\, \dfrac{-\kappa^{\prime \prime}(0) }{2}\sigma^{2}\,|\tilde{\psi}( \hbar^{1/2} \sigma)|^{2}\,  \bigg)\mathrm{~d}(\hbar^{1/2}\,\sigma)\bigg]&\leq \lambda_{N}(\mathscr{L}_{h}^{\mathrm{eff}} ) \|\psi\|^{2}_{\textit{$L^{2}(]-L,L[)$}}\\& \quad\quad\quad\quad+\mathcal{O}(\hbar^{3/2})\|\psi\|^{2}_{\textit{$L^{2}(]-L,L[)$}}  \,.
\end{align*}
We deduce that $$\max_{\tilde{\psi}  \in \tilde{\mathcal{E}}_{N}(\hbar) }\dfrac{\langle H_{\text{harm}} \tilde{\psi},\tilde{\psi}  \rangle_{\textit{$L^{2}(\mathbb{R})$}}}{\|\tilde{\psi}\|^{2}_{\textit{$L^{2}(\mathbb{R})$}}}\leq \lambda_{N}(\mathscr{L}_{h}^{\mathrm{eff}} ) +\mathcal{O}(\hbar^{3/2})\,.  $$
%\\ &\geq (2N-1) \hbar \sqrt{\frac{-\kappa^{\prime \prime}(0)}{2}}\|\psi\|^{2}_{\textit{$L^{2}(]-L,L[)$}}- C\hbar^{3/2}\|\psi\|^{2}_{\textit{$L^{2}(]-L,L[)$}}
Thus, by the min-max principle, we have
the comparison of the Rayleigh quotients
\begin{align*}
  \lambda_{N}(\mathscr{L}_{h}^{\mathrm{eff}} )& \geq  \lambda_{N}(H_{\text{harm}} )   +\mathcal{O}(\hbar^{3/2})
\\ &\geq  (2N-1) \hbar \sqrt{\frac{-\kappa^{\prime \prime}(0)}{2}}+\mathcal{O}(\hbar^{3/2})\,.
\end{align*}
\end{proof}
Finally, from \eqref{55}, the asymptotic expansion of the $n$-th eigenvalue of $\mathscr{L}_{h}^{\mathrm{eff}} $ is:
  $$\lambda_{n}(\mathscr{L}_{h}^{\mathrm{eff}} )=(2n-1) \hbar \sqrt{\frac{-\kappa^{\prime \prime}(0)}{2}} +\mathcal{O}(\hbar^{3/2})\,.$$ 
Now, by inequalities \eqref{jiu} and Proposition \ref{L2,}, we deduce the asymptotic expansion of the $n$-th eigenvalue of the effective operator
$\mathcal{L}_{h,\alpha,0}^{\mathrm{eff}}$ 
\begin{equation}
    \label{56}
    \lambda_{n}(\mathcal{L}_{h,\alpha,0}^{\mathrm{eff}})=-1-\kappa_{\text{max}}\hbar^{2}+(2n-1)  \sqrt{\frac{-\kappa^{\prime \prime}(0)}{2}} \hbar^{3}+\mathcal{O}(\hbar^{7/2})\,.
\end{equation}
From Theorem \ref{R21} and Proposition \ref{L2}, we have shown Corollary \ref{ab},  which was a reformulation of Corollary \ref{abc}.
\section{Unit disc case}
\label{5é}
In this section we analyse the magnetic Robin Laplacian with a negative boundary parameter on the disc. There by proving of Theorem \ref{86}. We will study the spectrum of the effective operator $\mathcal{L}_{h,\text{disc}}^{\mathrm{eff},\pm}$, where $\Omega=D(0,1)\,,$ $0< c_{1}<c_{2}\,,$ $0<\alpha<1-\eta\,$ and $ \,\eta<1.$
We have 
$$ L=\dfrac{|\partial\Omega|}{2}=\pi, \quad \beta_{0}=\dfrac{|\Omega|}{|\partial\Omega|}=\dfrac{1}{2} \,, \quad c_{1}h^{\frac{-\eta}{2}}\leq b \leq c_{2}h^{\frac{-\eta}{2}}  \quad  \text {and}  \quad \kappa=1
\,.$$
Thus, the effective operators can be written as follows
$$\mathcal{L}_{h,\text{disc}}^{\mathrm{eff},\pm}=-h(1\pm c_{\pm}\,h^{\min(\alpha,\frac{1}{2})})\left(\partial_{s}-i\dfrac{b}{2}\right)^{2}-1- h^{\frac{1}{2}}-\frac{h}{2}\,,$$
  in  $\textit{$L^{2}(\mathbb{R}/2\pi\mathbb{Z})$}\,,$ where $c_{\pm}$ are constants independent of $h$. The associate quadratic form  on $\textit{$H^{1}(\mathbb{R}/2\pi \mathbb{Z})$}$  are
$$
q_{h,\text{disc}}^{\mathrm{eff},\pm}(\varphi)=\left(-1-h^{\frac{1}{2}}-\frac{h}{2}\right) \int_{-\pi}^{\pi}|\varphi(s)|^{2}\mathrm{~d} s+h\left((1\pm c_{\pm}h^{\min(\alpha,\frac{1}{2})}\right) \int_{-\pi}^{\pi}\left|\left(\partial_{s}-i \frac{b}{2}\right) \varphi\right|^{2}\mathrm{~d}s\,.
$$
We use the completeness of the orthonormal family  $\left\{e^{i m s} / \sqrt{2 \pi}\right\}_{m \in \mathbb{Z}}$ in $L^{2}(\mathbb{R} / 2 \pi \mathbb{Z})\,.$
For all  $\varphi \in H^{1}(\mathbb{R} / 2 \pi \mathbb{Z})$, we have
$$
\varphi(s)=\sum_{m \in  \mathbb{Z}} \widehat{\varphi}(m) \frac{e^{i m s}}{\sqrt{2 \pi}}\,,
$$
where 
$$
\widehat{\varphi}(m)=\frac{1}{\sqrt{2 \pi}} \int_{-\pi}^{\pi} \varphi(s) e^{-i m s} \mathrm{~d} s .
$$
\subsection{Reduction to a simple operator:}
Consider the effective operator 
$$\mathcal{L}_{h,\text{disc}}^{\mathrm{eff}}=-h\left(\partial_{s}-i\dfrac{b}{2}\right)^{2}-1- h^{\frac{1}{2}}-\frac{h}{2}\,,$$
and the associate quadratic form  on $\textit{$H^{1}(\mathbb{R}/2\pi \mathbb{Z})$}$  
$$
q_{h,\text{disc}}^{\mathrm{eff}}(\varphi)=\left(-1-h^{\frac{1}{2}}-\frac{h}{2}\right) \int_{-\pi}^{\pi}|\varphi(s)|^{2}\mathrm{~d} s+h \int_{-\pi}^{\pi}\left|\left(\partial_{s}-i \frac{b}{2}\right) \varphi\right|^{2}\mathrm{~d}s\,.
$$
\begin{proposition}
\label{yiyi}
There exist constants $C > 0$ and $h_{0}\in(0, 1)$ such that, for all $h \in (0, h_{0})$ and $n\in\mathbb{N}^{*}$, it holds
$$|\lambda_{1}(\mathcal{L}_{h,\rm disc}^{\mathrm{eff},\pm})-\lambda_{1}(\mathcal{L}_{h,\rm disc}^{\mathrm{eff}}) |\leq C h^{\min(1+\alpha,\frac{3}{2})}\,.$$
\end{proposition}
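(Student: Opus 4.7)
The plan is to observe that both operators $\mathcal{L}_{h,\rm disc}^{\mathrm{eff},\pm}$ and $\mathcal{L}_{h,\rm disc}^{\mathrm{eff}}$ are constant-coefficient operators on $\mathbb{R}/2\pi\mathbb{Z}$ and hence simultaneously diagonalizable in the Fourier basis $\{e^{ims}/\sqrt{2\pi}\}_{m\in\mathbb{Z}}$. First, I would simply apply the two operators to the basis functions: each $e^{ims}/\sqrt{2\pi}$ is an eigenfunction of $\mathcal{L}_{h,\rm disc}^{\mathrm{eff}}$ with eigenvalue $h(m-\frac{b}{2})^{2}-1-h^{1/2}-\frac{h}{2}$, and an eigenfunction of $\mathcal{L}_{h,\rm disc}^{\mathrm{eff},\pm}$ with eigenvalue $h(1\pm c_{\pm}h^{\min(\alpha,1/2)})(m-\frac{b}{2})^{2}-1-h^{1/2}-\frac{h}{2}$. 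Since the constant shift $-1-h^{1/2}-\frac{h}{2}$ is common and the prefactor $h(1\pm c_{\pm}h^{\min(\alpha,1/2)})$ is positive for $h$ small enough, the ground state eigenvalue in each case is obtained by minimizing the quadratic part over $m\in\mathbb{Z}$.

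Next, I would write
\[
\lambda_{1}(\mathcal{L}_{h,\rm disc}^{\mathrm{eff},\pm})-\lambda_{1}(\mathcal{L}_{h,\rm disc}^{\mathrm{eff}})
=\pm c_{\pm}\,h^{\,1+\min(\alpha,1/2)}\inf_{m\in\mathbb{Z}}\Bigl(m-\tfrac{b}{2}\Bigr)^{2}.
\]
Here the only non-trivial observation is that the $\inf_{m\in\mathbb{Z}}(m-\frac{b}{2})^{2}$ is realized at the integer nearest to $b/2$, and is therefore bounded above by $1/4$, uniformly in $b$. This immediately yields
\[
\bigl|\lambda_{1}(\mathcal{L}_{h,\rm disc}^{\mathrm{eff},\pm})-\lambda_{1}(\mathcal{L}_{h,\rm disc}^{\mathrm{eff}})\bigr|
\leq \tfrac{c_{\pm}}{4}\,h^{\,1+\min(\alpha,1/2)}=C\,h^{\min(1+\alpha,3/2)},
\]
which is the desired estimate (using that $1+\min(\alpha,1/2)=\min(1+\alpha,3/2)$).

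There is essentially no obstacle in this proof: because both operators commute (they differ only by a positive scalar multiple of $(\partial_{s}-ib/2)^{2}$), the Fourier series reduces the spectral comparison to an algebraic identity. The only point to watch is the uniformity in $b$, which is secured by the elementary bound $\inf_{m\in\mathbb{Z}}(m-\frac{b}{2})^{2}\leq 1/4$. The same argument would actually give the stronger statement $|\lambda_{n}(\mathcal{L}_{h,\rm disc}^{\mathrm{eff},\pm})-\lambda_{n}(\mathcal{L}_{h,\rm disc}^{\mathrm{eff}})|\leq C_{n}h^{\min(1+\alpha,3/2)}$ for any fixed $n$, but the proposition only requires $n=1$ for use in the proof of Theorem~\ref{86a}.
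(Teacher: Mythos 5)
Your proof is correct, and it follows the same basic route as the paper: diagonalization in the Fourier basis $\{e^{ims}/\sqrt{2\pi}\}_{m\in\mathbb{Z}}$, which reduces everything to the elementary bound $\inf_{m\in\mathbb{Z}}(m-\tfrac{b}{2})^{2}\leq\tfrac14$ (the paper uses the slightly cruder bound $\leq 1$ via $(\lfloor b/2\rfloor+1-b/2)^{2}$). The one genuine difference is in execution: the paper estimates the \emph{form difference} $|q_{h,\rm disc}^{\mathrm{eff}}(\varphi)-q_{h,\rm disc}^{\mathrm{eff},\pm}(\varphi)|$ for arbitrary $\varphi$ and then invokes the min-max principle, and as written its key displayed inequality bounds $\sum_{m}(m-\tfrac b2)^{2}|\hat\varphi(m)|^{2}$ \emph{above} by $\inf_{m}(m-\tfrac b2)^{2}\|\varphi\|^{2}$, which is backwards for a general $\varphi$ (that quantity is a lower bound, not an upper bound; the supremum over $m$ is infinite). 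Your version — computing both spectra exactly, noting that the common positive prefactor does not change the minimizing mode $m_{0}$, and subtracting the two ground-state energies — sidesteps this issue entirely and gives the exact identity $\lambda_{1}(\mathcal{L}_{h,\rm disc}^{\mathrm{eff},\pm})-\lambda_{1}(\mathcal{L}_{h,\rm disc}^{\mathrm{eff}})=\pm c_{\pm}h^{1+\min(\alpha,1/2)}\inf_{m}(m-\tfrac b2)^{2}$, from which the stated estimate is immediate. So your argument is not just valid but is the correct way to justify the step the paper intended; your closing remark that it extends to all $\lambda_{n}$ is also right, since both operators share the full Fourier eigenbasis.
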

\begin{proof}
For all $\varphi \in \textit{$H^{1}(\mathbb{R}/2\pi \mathbb{Z})$}\,,$ we have 
\begin{align*}
    |q_{h,\text{disc}}^{\mathrm{eff}}(\varphi)-q_{h,\text{disc}}^{\mathrm{eff},\pm}(\varphi)|&=c_{\pm}h^{\min(1+\alpha,\frac{3}{2})} \int_{-\pi}^{\pi}\left|\left(\partial_{s}-i \frac{b}{2}\right) \varphi\right|^{2}\mathrm{~d}s\\&=c_{\pm}h^{\min(1+\alpha,\frac{3}{2})}\sum_{m \in  \mathbb{Z}}\left(m-\frac{b}{2}\right)^{2}|\hat{\varphi}(m)|^{2}\\& \leq c_{\pm}h^{\min(1+\alpha,\frac{3}{2})} \inf _{m \in \mathbb{Z}}\left(m-\frac{b}{2}\right)^{2} \|\varphi\|^{2}_{\textit{$L^{2}(\mathbb{R}/2\pi\mathbb{Z})$}}\,.
\end{align*}
We have $$  \inf _{m \in \mathbb{Z}}\left(m-\frac{b}{2}\right)^{2}\leq   \left( \bigg\lfloor \frac{b}{2} \bigg\rfloor+1-\frac{b}{2}\right)^{2}\leq1\,.$$
The conclusion of the proposition is now a simple application of the min-max principle.
\end{proof}
\subsection{Spectrum of $\mathcal{L}_{h,\rm disc}^{\mathrm{eff}}\,$: } For all $\varphi \in H^{1}(\mathbb{R} / 2 \pi \mathbb{Z}),$ we have
$$
\begin{aligned}
q_{h,\text{disc}}^{\mathrm{eff}}(\varphi) &=\left(-1-h^{\frac{1}{2}}-\frac{h}{2}\right) \int_{-\pi}^{\pi}|\varphi(s)|^{2} \mathrm{~d}s+h \int_{-\pi}^{\pi}\left|\left(\partial_{s}-i \frac{b}{2}\right) \varphi\right|^{2} \mathrm{~d} s \\
&=\sum_{m \in  \mathbb{Z}}\left(-1-h^{\frac{1}{2}}-\frac{h}{2}+h\left(m-\frac{b}{2}\right)^{2}\right)|\hat{\varphi}(m)|^{2}
\end{aligned}
$$
Let $k \in \mathbb{Z};$ we set
$$
\varphi_{k}(s)=\frac{e^{i k s}}{\sqrt{2 \pi}}:=e_{k}(s)\,.
$$
Inserting $\varphi_{k}$ into the quadratic form, we have for all  $k \in \mathbb{Z}$
$$
q_{h,\text{disc}}^{\mathrm{eff}}(\varphi_{k})=-1-h^{\frac{1}{2}}-\frac{h}{2}+h\left(k-\frac{b}{2}\right)^{2}.
$$
By application of min-max principle, we have 
\begin{align*}
\forall k \in \mathbb{Z}, \quad \lambda_{1}\left(\mathcal{L}_{h,\text{disc}}^{\mathrm{eff}}\right) & \leq \frac{q_{h,\text{disc}}^{\mathrm{eff}}\left(e_{k}\right)}{\left\|e_{k}\right\|^{2}} \leq-1-h^{\frac{1}{2}}-\frac{h}{2}+h\left(k-\frac{b}{2}\right)^{2},
\end{align*}
then
\begin{align*}
\lambda_{1}(\mathcal{L}_{h,\text{disc}}^{\mathrm{eff}}) & \leq-1-h^{\frac{1}{2}}-\frac{h}{2}+h \inf _{k \in \mathbb{Z}}\left(k-\frac{b}{2}\right)^{2}.
\end{align*}
Therefore, we have the following upper bound on the first eigenvalue
\begin{equation}
    \label{57}
   \lambda_{1}(\mathcal{L}_{h,\text{disc}}^{\mathrm{eff}})  \leq-1-h^{\frac{1}{2}}+\left(\inf _{m \in \mathbb{Z}}\left(m-\frac{b}{2}\right)^{2}-\frac{1}{2}\right) h\,. 
\end{equation}Now, for lower bound. For all  $\varphi \in H^{1}(\mathbb{R} / 2 \pi \mathbb{Z}),\, $ we have
\begin{align*}
    q_{h,\text{disc}}^{\mathrm{eff}}(\varphi) &=\sum_{m \in  \mathbb{Z}}\left(-1-h^{\frac{1}{2}}-\frac{h}{2}+h\left(m-\frac{b}{2}\right)^{2}\right)|\hat{\varphi}(m)|^{2}\\& \geq \left(-1-h^{\frac{1}{2}}-\frac{h}{2}+h\inf _{m \in \mathbb{Z}}\left(m-\frac{b}{2}\right)^{2}\right)\sum_{m \in  \mathbb{Z}}|\hat{\varphi}(m)|^{2}
    \\ & = \left(-1-h^{\frac{1}{2}}-\frac{h}{2}+h\inf _{m \in \mathbb{Z}}\left(m-\frac{b}{2}\right)^{2}\right) \|\varphi\|^{2}_{\textit{$L^{2}(\mathbb{R}/2\pi\mathbb{Z})$}}\,.
\end{align*}
Then, by the min-max principle, we have
\begin{equation}
    \label{58}
    \lambda_{1}(\mathcal{L}_{h,\text{disc}}^{\mathrm{eff}})  \geq-1-h^{\frac{1}{2}}+\left(\inf _{m \in \mathbb{Z}}\left(m-\frac{b}{2}\right)^{2}-\frac{1}{2}\right) h\,. 
\end{equation}
From \eqref{57},\eqref{58} and Proposition \ref{yiyi}, we deduce the asymptotic expansion of the first eigenvalue of the effective operator $\mathcal{L}_{h,\text{disc}}^{\mathrm{eff},\pm}$ 
\begin{equation}
    \label{59}
    \lambda_{1}(\mathcal{L}_{h,\text{disc}}^{\mathrm{eff},\pm}) = -1-h^{\frac{1}{2}}+\left(\inf _{m \in \mathbb{Z}}\left(m-\frac{b}{2}\right)^{2}-\frac{1}{2}\right) h+o(h)\,. 
\end{equation}
 According to Theorem \ref{R21}, and for $\eta<1\,,$ we get 
 $$\tilde{\mathcal{O}}(h^{2-\eta})=o(h)\,,$$ and we deduce
 $$\mu_{1}(h,b)=-h-h^{\frac{3}{2}}+\left(\inf _{m \in \mathbb{Z}}\left(m-\frac{b}{2}\right)^{2}-\frac{1}{2}\right) h^{2}+o(h^{2})\,.$$
 %To obtain the proof of Theorem \ref{86}, it suffices to use the relation \eqref{90} between the spectrum of the departure operator and the semi-classical operator. 
Note that, in the case with constant magnetic field, we recover by a very simple proof, Theorem 1.1 of the article \cite{2}.
\begin{appendix}
\section{Auxiliary operators}
\label{oi1}
The aim of this section is to recall some spectral properties related to the Robin Laplacian in dimension one. This model naturally arises in our strategy of dimensional reduction and already appeares in  \cite{6}, \cite{7}, \cite{2}, \cite{14}, \cite{17} and \cite{8}.\subsection{1D Laplacian on the half line.}\label{s6}
We recall here some spectral properties of Robin Laplacian on $\mathbb{R}_{+}$.
Let the operator $$\mathcal{H}_{0,0}:=-\dfrac{d^{2}}{d\tau^{2}}\,\,\, \text{dans}\,\,\, \textit{$L^{2}(\mathbb{R}_{+})$}\, ,$$ with domain $\mathrm{Dom}(\mathcal{H}_{0,0}) =  \lbrace \,u \in  \textit{$H^{2}(\mathbb{R}_{+})$} :\,\,u^{'}(0)\,\,=\,\, - \,\,u(0)\, \rbrace\, . $ \\

This operator is self-adjoint but is not with a compact resolvent, then
the spectrum of this operator is
$\sigma(\mathcal{H}_{0})=\text{sp}_{\text{dis}}(\mathcal{H}_{0,0}) \cup \text{sp}_{\text{ess}}(\mathcal{H}_{0,0})=\lbrace -1\rbrace \cup\left[0,+\infty \right[ \,.\,$  The subspace associated with the unique eigenvalue $ -1 $ is generated by the normalized eigenfunction in $\textit{L}^{2}(\mathbb{R}^{+})\,,$
\begin{equation}
    \label{66}
    u_{0}(\tau)=\sqrt{2}\, \exp(- \, \tau)\, .
    \end{equation}
    \subsection{ A weighted 1D Laplacian }\label{d1}
   Let $B \in \mathbb{R}$, $T>0$ such that $|B|T<\frac{1}{3}\,$. In the weighted space $\textit{$L^{2}(\,(0,T),(1-B\tau)\mathrm{~d}\tau\,)$} $, we introduce the operator
  $$\mathcal{H}_{B}^{\lbrace T \rbrace}=-\dfrac{d^{2}}{d\tau^{2}}+ \dfrac{B}{1-B\tau}\dfrac{d}{d\tau}\,\, ,$$ 
   with domain $$\mathrm{Dom}(\mathcal{H}_{B}^{\lbrace T \rbrace})= \lbrace\, u \in  \textit{$H^{2}(0,T )$} :\,\,u^{'}(0)\,\,=\,-\,u(0)\, \,\text{and}\,\, u(T)\,\,=0 \, \rbrace\,. $$
 This weight will come to measure the effect of the curvature. The operator
 $\mathcal{H}_{B}^{\lbrace T \rbrace}$ is the self-adjoint operator in $\textit{$L^{2}(\,(0,T),(1-B\tau)\,d\tau\,)$} $ associated with the following quadratic form:
    $$\textit{$ q_{B}^{\lbrace T \rbrace} $}(u)=\int_{0}^{T}|u^{'}(\tau)|^{2}(1-B\tau) \mathrm{~d}\tau -|u(0)|^{2}\,\,\, .$$  with domain
$$\,\,\,\mathrm{Dom}(\mathcal{H}_{B}^{\lbrace T\rbrace})=\lbrace \,u \in  \textit{$H^{1}(0,T)$} \,:\, u(T)\,\,=0\, \rbrace\,.\,\,$$ The operator $\mathcal{H}_{B}^{\lbrace T \rbrace}$ is with a compact resolvent. Hence the spectrum $\sigma(\mathcal{H}_{B}^{\lbrace T \rbrace})\subseteq \mathbb{R}\,$ is purely discrete and consists of a strictly increasing sequence of eigenvalues
 $\Big(\lambda_{n}\Big(\mathcal{H}_{B}^{\lbrace T \rbrace}\Big)\Big)_{n\in \mathbb{N}^{*}}\,.$\\ 
 
The following proposition gives an asymptotic two-term expansion of the eigenvalue $\lambda_{1}(\mathcal{H}_{B}^{\lbrace T \rbrace})\,.$
\begin{proposition}
\label{P1}
$($Asymptotic of $\lambda_{1}(\mathcal{H}_{B}^{\lbrace T \rbrace})\,)\,\,$ There exist constant $C>0$ and $T_{0}>0$, such that for all $T\geq T_{0}$,  $B\in (-1/(3T),1/(3T))\,,$ we have:
 $$|\,\lambda_{1}(\mathcal{H}_{B}^{\lbrace T \rbrace})-(-1-B)\,|\leq CB^{2}.$$
\end{proposition}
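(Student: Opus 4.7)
The plan is to reduce $\mathcal{H}_B^{\lbrace T\rbrace}$ to a standard Robin Laplacian on $(0,T)$ without weight via a unitary substitution. I would first introduce the unitary map $U \colon L^2((0,T),d\tau) \to L^2((0,T),(1-B\tau)\,d\tau)$ defined by $(Uw)(\tau) = (1-B\tau)^{-1/2} w(\tau)$, which preserves the Dirichlet condition at $\tau=T$ and satisfies $u(0)=w(0)$. A direct expansion of $|u'(\tau)|^2(1-B\tau)$, combined with integration by parts on the cross term $\tfrac{B}{1-B\tau}\,ww'$ (using $w(T)=0$), transforms the quadratic form of $\mathcal{H}_B^{\lbrace T\rbrace}$ into
\[
\tilde q_B(w) \;=\; \int_0^T |w'(\tau)|^2\,d\tau \;-\; \Bigl(1+\tfrac{B}{2}\Bigr)\, |w(0)|^2 \;-\; \int_0^T \frac{B^2}{4(1-B\tau)^2}\,|w(\tau)|^2\,d\tau,
\]
acting on $\{w\in H^1(0,T):\,w(T)=0\}$. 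Hence $\mathcal{H}_B^{\lbrace T\rbrace}$ is unitarily equivalent to a Robin Laplacian on $(0,T)$ with Robin parameter $\beta:=1+\tfrac{B}{2}$ at $\tau=0$ and Dirichlet at $\tau=T$, plus a bounded potential $V_B(\tau)=-B^2/[4(1-B\tau)^2]$.

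Next, I would estimate the leading piece and the perturbation separately. For the unweighted Robin Laplacian $\mathcal{K}_\beta^{\lbrace T\rbrace}$ with $\beta>0$, extending admissible $w$ by zero beyond $\tau=T$ yields $\lambda_1(\mathcal{K}_\beta^{\lbrace T\rbrace})\geq -\beta^2$ (the half-line ground-state energy recalled in Section~\ref{s6}), while the truncated trial function $e^{-\beta\tau}$ gives $\lambda_1(\mathcal{K}_\beta^{\lbrace T\rbrace})\leq -\beta^2 + O(e^{-2\beta T})$. Using the hypothesis $|B|T<1/3$, the potential satisfies $|V_B(\tau)|\leq \tfrac{9}{16}B^2$ uniformly, so by min--max it shifts every eigenvalue by at most $O(B^2)$.

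Combining, with $\beta=1+\tfrac{B}{2}$ so that $-\beta^2=-1-B-\tfrac{B^2}{4}$, yields
\[
\lambda_1(\mathcal{H}_B^{\lbrace T\rbrace}) \;=\; -1 - B \;+\; O(B^2) \;+\; O(e^{-2T}),
\]
and choosing $T_0$ large enough absorbs the exponentially small finite-interval correction into the $O(B^2)$ remainder in the relevant regime. The main obstacle I anticipate is the bookkeeping in the transformation of the quadratic form: verifying that integration by parts on the cross term $\tfrac{B}{1-B\tau}\,ww'$ produces exactly both the shift $-\tfrac{B}{2}|w(0)|^2$ of the effective Robin coefficient and the bulk contribution that cancels half of the $\tfrac{B^2}{4(1-B\tau)^2}|w|^2$ coming from the expansion of $|u'|^2(1-B\tau)$, leaving the clean residual $-B^2/[4(1-B\tau)^2]$. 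Once this identification is in hand, the comparison with the half-line Robin Laplacian and the perturbative bound on $V_B$ reduce to standard min--max arguments.
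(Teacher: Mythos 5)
Your reduction is sound and, in fact, coincides with the framework the paper itself sets up: the paper states this proposition without proof (it is essentially imported from \cite{2}), but immediately after the statement it performs precisely your substitution $u=(1-B\tau)^{-1/2}\tilde u$ and records the transformed quadratic form $\tilde q_B^{\{T\}}(\psi)=\int_0^T|\partial_\tau\psi|^2\,d\tau-\int_0^T\tfrac{B^2}{4(1-B\tau)^2}|\psi|^2\,d\tau-\bigl(1+\tfrac B2\bigr)|\psi(0)|^2$, which is exactly the expression you arrive at; so the bookkeeping you flag as the main obstacle (the cross term producing the $-\tfrac B2|w(0)|^2$ boundary shift plus the bulk term that halves the $\tfrac{B^2}{4}(1-B\tau)^{-2}$ contribution) does work out and is confirmed by the paper. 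The remaining ingredients are correct and standard: $\lambda_1(\mathcal K_\beta^{\{T\}})\ge-\beta^2$ by zero-extension to the half line (Section~\ref{s6}), $\lambda_1(\mathcal K_\beta^{\{T\}})\le-\beta^2+\mathcal O(e^{-2\beta T})$ via a cut-off of $e^{-\beta\tau}$, the uniform bound $|V_B|\le\tfrac9{16}B^2$ from $1-B\tau>\tfrac23$, and $\beta^2=1+B+\tfrac{B^2}{4}$, giving $|\lambda_1(\mathcal H_B^{\{T\}})+1+B|\le\tfrac{13}{16}B^2+\mathcal O(e^{-2\beta T})$.

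The one point to be careful about is the exponentially small finite-interval correction, which you acknowledge only by appealing to ``the relevant regime''. The term $\mathcal O(e^{-2\beta T})$ cannot be absorbed into $CB^2$ uniformly over $B\in(-1/(3T),1/(3T))$ for fixed $T\ge T_0$: at $B=0$ the proposition would assert $\lambda_1(\mathcal H_0^{\{T\}})=-1$ exactly, whereas the Robin--Dirichlet problem on $(0,T)$ has first eigenvalue $-\mu^2$ with $\mu=\tanh(\mu T)<1$, i.e.\ $-1+\mathcal O(e^{-2T})\ne-1$. This is a defect of the statement as printed rather than of your argument (in the application $T=h^{-\rho}\to\infty$ while $B\asymp h^{1/2}$, so $e^{-2\beta T}$ is superpolynomially small compared with $B^2$ and the issue disappears), but you should state your conclusion in the honest form $\lambda_1(\mathcal H_B^{\{T\}})=-1-B+\mathcal O(B^2)+\mathcal O(e^{-2\beta T})$ rather than claim the literal bound $CB^2$ for all admissible $B$.
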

After the change of function $u=(1-B\tau)^{-\frac{1}{2}}\tilde{u}\,,$ 
the new Hilbert space becomes  $\textit{$L^{2}(0,T)$}, d\tau)$, the form domain is always independent of the parameter and the expression of the operator depends on $B$ :
$$\mathcal{\tilde{H}}_{B}^{\lbrace T \rbrace}=-\dfrac{d^{2}}{d\tau^{2}}-\dfrac{B^{2}}{4(1-B\tau)^{2}}\, ,$$ 
with the new Robin condition at $ 0 $ denoted by $\tilde{u}^{'}(0)=\Big(-1-\dfrac{B}{2}\Big)\,\tilde{u}(0) \,\text{and}\,\, \tilde{u}(T)\,\,=0 \,.$\\
Note that the associated quadratic form is defined by
 $$\textit{$ \tilde{q}_{B}^{\lbrace T \rbrace} $}(\psi)=\int_{0}^{T}|\partial_{\tau}\psi|^{2}d\tau -\int_{0}^{T}\dfrac{B^{2}}{4(1-B\tau)^{2}}|\psi|^{2}d\tau -\Big(1+\dfrac{B}{2}\Big)|\psi(0)|^{2}\, .$$

For further use, we would like to estimate $\|\tau \tilde{u}_{B}^{\lbrace T \rbrace}\|_{\textit{$L^{2}(0,T)$}, d\tau)}$ and $\|\partial_{B}\tilde{u}_{B}^{\lbrace T \rbrace}\|_{\textit{$L^{2}(0,T)$}, d\tau)} $ uniformly with respect to $B$ and $T$.
\begin{proposition}
\label{P1}
There exist constants $C>0$ , $\alpha>0$ and $T_{0}>0$ such that, for all $T\geq T_{0}\,,$ $B\in (-1/(3T),1/(3T))\,,$ we have:
$$\|e^{\alpha \tau} \tilde{u}_{B}^{\lbrace T \rbrace}\|_{\textit{$L^{2}(0,T)$}, \mathrm{~d}\tau)}\leq C\,,$$ with  $\tilde{u}_{B}^{\lbrace T \rbrace}$  is the normalized eigenfunction associated with $\lambda_{1}(\mathcal{H}_{B}^{\lbrace T \rbrace})\,.$
\end{proposition}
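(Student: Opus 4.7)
The plan is to prove the weighted $L^2$ bound by the standard Agmon strategy adapted to the Robin--Dirichlet interval $(0,T)$. I work in the gauge-transformed picture where the operator reads $\tilde{\mathcal{H}}_{B}^{\lbrace T \rbrace}=-\partial_\tau^2-\tfrac{B^2}{4(1-B\tau)^2}$ on $L^2((0,T),\mathrm{d}\tau)$, with Robin condition $\tilde{u}'(0)=-(1+B/2)\tilde{u}(0)$ and Dirichlet condition $\tilde{u}(T)=0$, and I keep $\tilde{u}=\tilde{u}_B^{\lbrace T\rbrace}$ normalized in $L^2(\mathrm{d}\tau)$. The underlying heuristic is that the ground state eigenvalue $\lambda_1(\mathcal{H}_B^{\lbrace T\rbrace})\approx -1$ is separated by a spectral gap from $0$, which is the infimum of $-\partial_\tau^2$ on functions vanishing at $\tau=0$; this gap will absorb any exponential weight $e^{\alpha\tau}$ with $\alpha\in(0,1)$.

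First I would derive an Agmon-type identity. Fix $\alpha\in(0,1)$ and set $v:=e^{\alpha\tau}\tilde{u}$. Testing the eigenvalue equation $\tilde{\mathcal{H}}_B^{\lbrace T\rbrace}\tilde{u}=\lambda_1\tilde{u}$ against $e^{2\alpha\tau}\tilde{u}$, integrating by parts, and using the Dirichlet condition at $\tau=T$ together with the Robin condition at $\tau=0$, one obtains
\[\tilde{q}_B^{\lbrace T\rbrace}(v)=\bigl(\lambda_1(\mathcal{H}_B^{\lbrace T\rbrace})+\alpha^2\bigr)\|v\|_{L^2(\mathrm{d}\tau)}^2\,.\]
Invoking the preceding Proposition, $\lambda_1(\mathcal{H}_B^{\lbrace T\rbrace})=-1-B+\mathcal{O}(B^2)$, so for $\alpha^2<1$ fixed and $|B|$ small the coefficient on the right is strictly negative and bounded away from $0$ uniformly in $T$ and $B$.

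Next, I would establish a matching lower bound for $\tilde{q}_B^{\lbrace T\rbrace}(v)$ by an IMS localization. Introduce smooth cut-offs with $\chi_1^2+\chi_2^2=1$ on $[0,T]$, $\chi_1=1$ on $[0,R]$ and $\chi_1=0$ on $[R+1,T]$, for a fixed $R>0$. The IMS formula yields
\[\tilde{q}_B^{\lbrace T\rbrace}(v)=\tilde{q}_B^{\lbrace T\rbrace}(\chi_1 v)+\tilde{q}_B^{\lbrace T\rbrace}(\chi_2 v)-\int_0^T(|\chi_1'|^2+|\chi_2'|^2)v^2\,\mathrm{d}\tau\,.\]
For the near-boundary piece, the min-max principle gives $\tilde{q}_B^{\lbrace T\rbrace}(\chi_1 v)\geq\lambda_1(\mathcal{H}_B^{\lbrace T\rbrace})\|\chi_1 v\|^2$. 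For the far piece, $\chi_2 v$ vanishes at $\tau=0$, so the Robin term drops out, and $\tfrac{B^2}{4(1-B\tau)^2}\leq CB^2$ under the hypothesis $|B|T\leq 1/3$, whence $\tilde{q}_B^{\lbrace T\rbrace}(\chi_2 v)\geq -CB^2\|\chi_2 v\|^2$. Combining this lower bound with the identity of the previous step, using $\|v\|^2=\|\chi_1 v\|^2+\|\chi_2 v\|^2$ and rearranging, the leading $-\|v\|^2$ contributions cancel and one is left with a coefficient $(1-\alpha^2)$ in front of $\|\chi_2 v\|^2$. This enables the absorption
\[(1-\alpha^2-\mathcal{O}(B))\|\chi_2 v\|^2\leq C\|v\|_{L^2([0,R+1])}^2\leq Ce^{2\alpha(R+1)}\,,\]
where the last step uses $\|\tilde u\|=1$; combined with the trivial bound $\|\chi_1 v\|^2\leq e^{2\alpha(R+1)}$, this gives $\|v\|^2\leq C$, uniformly in $T\geq T_0$ and $|B|<1/(3T)$.

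The main obstacle is to ensure that all constants are uniform in $T$: since $T\to\infty$ is allowed, one cannot rely on any Dirichlet spectral gap on $[R,T]$ (its first Dirichlet eigenvalue $\pi^2/(T-R)^2$ vanishes). The argument must be organized so that the Dirichlet endpoint $\tau=T$ only serves to kill the corresponding boundary term in the integration by parts, while the positive contribution comes solely from the non-negativity of $-\partial_\tau^2$ on the piece $\chi_2 v$ that vanishes at $\tau=0$. A secondary care is to verify that the IMS remainders $|\chi_1'|^2+|\chi_2'|^2$ and the $\mathcal{O}(B^2)$ error in $\lambda_1$ are absorbed with constants independent of $T$ and $B$; the freedom in choosing $\alpha$ strictly below $1$ provides exactly the margin needed for this absorption.
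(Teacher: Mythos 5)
Your argument is correct and closes uniformly in $T$ and $B$; it is the standard Agmon estimate, starting from the same identity $\tilde{q}_B^{\lbrace T\rbrace}(e^{\alpha\tau}\tilde u)=(\lambda_1+\alpha^2)\|e^{\alpha\tau}\tilde u\|^2$ that the paper derives by testing against $e^{2\phi}\tilde u$. Where you diverge is in how the positivity is extracted. The paper simply discards the gradient term $\int_0^T|\partial_\tau(e^{\phi}\tilde u)|^2\,\mathrm{d}\tau\geq 0$, controls the boundary contribution by first proving $|\tilde u_B^{\lbrace T\rbrace}(0)|^2\leq C$ via a trace inequality and the a priori $H^1$ bound, and then concludes from the \emph{pointwise} positivity of the effective potential $-\lambda_1-\alpha^2-\tfrac{B^2}{4(1-B\tau)^2}\geq\tfrac{1}{16}$ on all of $(0,T)$ (choosing $\alpha<\sqrt{5}/4$); no cut-offs are needed. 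You instead run an IMS localization, absorbing the boundary term through the min--max bound $\tilde q(\chi_1 v)\geq\lambda_1\|\chi_1 v\|^2$ and getting the coercivity on the far region from $-\partial_\tau^2\geq 0$ for functions vanishing at $\tau=0$. Both mechanisms exploit the same gap ($\lambda_1\approx-1$ versus $0$) and both are uniform in $T$; the paper's version is slightly shorter because the potential has no well away from $\tau=0$, while yours avoids the separate trace estimate on $|\tilde u(0)|$ and would generalize more readily to operators with a genuine interior potential. One cosmetic remark: your coefficient in the absorption step is really $-\lambda_1-\alpha^2-\mathcal O(B^2)=1+B-\alpha^2+\mathcal O(B^2)$, so any fixed $\alpha\in(0,1)$ works once $T_0$ is large (hence $|B|$ small), consistent with, and in fact slightly better than, the paper's restriction $\alpha<\sqrt{5}/4$.
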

\begin{proof}
Let $\phi$ a smooth function. By the formula of integration by parts, we get:
\begin{align*}
\Big\langle\,\mathcal{H}_{B}^{\lbrace T \rbrace} \tilde{u}_{B}^{\lbrace T \rbrace}, e^{2\phi}\tilde{u}_{B}^{\lbrace T \rbrace} \,\Big\rangle&=\int_{0}^{T}\Big|\partial_{\tau}\Big(e^{2\phi}\tilde{u}_{B}^{\lbrace T \rbrace} \Big)\Big|^{2}\mathrm{~d}\tau -\int_{0}^{T}\dfrac{B^{2}}{4(1-B\tau)^{2}}e^{2\phi}|\tilde{u}_{B}^{\lbrace T \rbrace} |^{2}\mathrm{~d}\tau \\&\,\,\,\,\,\,\,\,\,\,\,\,\,-\Big(1+\dfrac{B}{2}\Big)e^{2\phi(0)}|\tilde{u}_{B}^{\lbrace T \rbrace}(0)|^{2}-\|\phi^{'}e^{\phi}\,\tilde{u}_{B}^{\lbrace T \rbrace} \|^{2}\\&=\lambda_{1}(\mathcal{H}_{B}^{\lbrace T \rbrace})\,\|e^{\phi}\,\tilde{u}_{B}^{\lbrace T \rbrace} \|^{2}\,.
\end{align*}
We have $$|\tilde{u}_{B}^{\lbrace T \rbrace}(0)|^{2}=-2\int_{0}^{T} \partial_{\tau}\tilde{u}_{B}^{\lbrace T \rbrace}(\tau)\tilde{u}_{B}^{\lbrace T \rbrace}(\tau)\mathrm{~d}\tau\,,$$
according to a trace theory, there are constants $C>0$ such that, for all $\epsilon>0,$ we have:
\begin{equation}
\label{Q}
  |\tilde{u}_{B}^{\lbrace T \rbrace}(0)|^{2} \leq C \epsilon\, \|\partial_{\tau}\tilde{u}_{B}^{\lbrace T \rbrace}\|^{2}_{\textit{$L^{2}(0,T)$}} +C\epsilon^{-1} \|\tilde{u}_{B}^{\lbrace T \rbrace}\|^{2}_{\textit{$L^{2}(0,T)$}}   
  \end{equation}
 We replace \eqref{Q} in the form of $\textit{$ \tilde{q}_{B}^{\lbrace T \rbrace} $}$ and we use Proposition \ref{P1}, there exist constants $C>0$ such that:
  \begin{align*}
    \bigg(1-C\epsilon-\dfrac{C\epsilon B}{2}\bigg)\|\partial_{\tau}\tilde{u}_{B}^{\lbrace T \rbrace}\|^{2}_{\textit{$L^{2}(0,T)$}}\leq \bigg(\lambda_{1}(\mathcal{H}_{B}^{\lbrace T \rbrace}) + C\epsilon^{-1}\Big(1+\dfrac{B}{2}\Big)+C\bigg)\|\tilde{u}_{B}^{\lbrace T \rbrace}\|^{2}_{\textit{$L^{2}(0,T)$}}  \,
\end{align*}
  for $\epsilon=\dfrac{1}{3C}\,\,$ and $\,\,B\rightarrow 0\,$, we obtain:
$$\|\tilde{u}_{B}^{\lbrace T \rbrace}\|_{\textit{$H^{1}(0,T)$}}\leq C\,,$$
then, $$ |\tilde{u}_{B}^{\lbrace T \rbrace}(0)|^{2}\leq C\,. $$
Which implies the simple estimate
$$-\int_{0}^{T}\dfrac{B^{2}}{4(1-B\tau)^{2}}e^{2\phi}|\tilde{u}_{B}^{\lbrace T \rbrace} |^{2}\mathrm{~d}\tau -\Big(1+\dfrac{B}{2}\Big) C 
-\int_{0}^{T} |\phi^{'}|^{2}e^{2\phi}|\tilde{u}_{B}^{\lbrace T \rbrace} |^{2}\mathrm{~d}\tau
\leq\lambda_{1}(\mathcal{H}_{B}^{\lbrace T \rbrace})\,\|e^{\phi}\,\tilde{u}_{B}^{\lbrace T \rbrace} \|^{2}\,,$$ 
and thus,
$$\int_{0}^{T} \Big[-\dfrac{B^{2}}{4(1-B\tau)^{2}}-\lambda_{1}(\mathcal{H}_{B}^{\lbrace T \rbrace})- |\phi^{'}|^{2}\Big]   e^{2\phi} \,|\tilde{u}_{B}^{\lbrace T \rbrace} |^{2}\mathrm{~d}\tau \leq C \,.$$\\
As $T\rightarrow +\infty\,$, we have  $-1\leq B\leq1$, whence
\begin{enumerate}
\item[$\bullet$] $\dfrac{B^{2}}{4(1-B\tau)^{2}}\leq \dfrac{9}{16} B^{2}\,,$
\item[$\bullet$] $\lambda_{1}(\mathcal{H}_{B}^{\lbrace T \rbrace})\leq -1 -B\,.   $
\end{enumerate}
We choose $\phi=\alpha\, \tau$ with $\alpha$ a real positive constant.
For $\alpha<\dfrac{\sqrt{5}}{4}\,,$ we obtain:
$$-\dfrac{9}{16} B^{2}+1+B-\alpha^{2} \geq \dfrac{1}{16}\,,$$
then, $$\int_{0}^{T} e^{2\alpha\, \tau} \,|\tilde{u}_{B}^{\lbrace T \rbrace} |^{2}\,d\tau \leq C\,.$$
\end{proof}
\begin{lemma}
\label{L)}
There exist constants $C>0$ and $T_{0}>0$, such that, for all  $T\geq T_{0} \,$ and $\,B\in (-1/(3T),1/(3T))\,,$ we have:
$$\Big|\lambda_{1}(\mathcal{H}_{B}^{\lbrace T \rbrace})\Big|\leq C\,,   $$
$$\|\partial_{B}\tilde{u}_{B}^{\lbrace T \rbrace}\|_{\textit{$L^{2}(0,T)$}, \mathrm{~d}\tau)}\leq C\,.    $$
\end{lemma}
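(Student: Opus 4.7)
The first bound follows at once from the preceding proposition on the asymptotics of $\lambda_1$: since $T\ge T_0$ and $|B|<1/(3T)\le 1/(3T_0)$, the estimate $|\lambda_1(\mathcal{H}_B^{\{T\}})-(-1-B)|\le CB^2$ gives $|\lambda_1(\mathcal{H}_B^{\{T\}})|\le C'$ uniformly in $B$ and $T$.

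For the second bound, my plan is to differentiate the weak eigenvalue equation for $\tilde{u}_B^{\{T\}}$ with respect to $B$ and invert the resulting equation on the orthogonal complement of $\tilde{u}_B^{\{T\}}$. The key observation is that the form domain $V=\{u\in H^1(0,T):u(T)=0\}$ is $B$-independent, while the quadratic form $\tilde{q}_B^{\{T\}}$ is polynomial in $B$ on $V$, with derivative given by the sesquilinear form
$$
(\partial_B\tilde{q}_B^{\{T\}})(u,v)=-\int_0^T\frac{B}{2(1-B\tau)^3}\,u\bar v\,d\tau-\frac{1}{2}\,u(0)\,\bar v(0),
$$
which is bounded on $V\times V$ uniformly in $B$ and $T$ thanks to the pointwise bound $|B\tau|\le 1/3$ and the trace estimate \eqref{Q}.

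After fixing the phase of $\tilde{u}_B^{\{T\}}$ (e.g., so that $\langle \tilde{u}_B^{\{T\}},u_0\rangle>0$ with $u_0$ from \eqref{66}) and using the normalization constraint to impose $\partial_B\tilde{u}_B^{\{T\}}\perp\tilde{u}_B^{\{T\}}$, differentiating the identity $\tilde{q}_B^{\{T\}}(\tilde{u}_B^{\{T\}},v)=\lambda_1\langle\tilde{u}_B^{\{T\}},v\rangle$ for test functions $v\in V$ orthogonal to $\tilde{u}_B^{\{T\}}$ yields
$$
\tilde{q}_B^{\{T\}}(\partial_B\tilde{u}_B^{\{T\}},v)-\lambda_1\langle\partial_B\tilde{u}_B^{\{T\}},v\rangle=-(\partial_B\tilde{q}_B^{\{T\}})(\tilde{u}_B^{\{T\}},v).
$$
A min--max comparison with the auxiliary operator $-\partial_\tau^2$ (same Robin--Dirichlet boundary conditions, no potential), which has exactly one negative eigenvalue and second eigenvalue at least $0$, gives $\lambda_2(\tilde{\mathcal{H}}_B^{\{T\}})\ge -CB^2$; combined with $\lambda_1(\tilde{\mathcal{H}}_B^{\{T\}})\le -1-B+CB^2$, the spectral gap is at least $1/2$ uniformly in $B$ and $T$. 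Hence $(\tilde{\mathcal{H}}_B^{\{T\}}-\lambda_1)$ is invertible on $\{\tilde{u}_B^{\{T\}}\}^{\perp}$ as a map $V\to V^*$ with norm independent of $B,T$. Using the uniform bounds $\|\tilde{u}_B^{\{T\}}\|_{H^1(0,T)}\le C$ and $|\tilde{u}_B^{\{T\}}(0)|\le C$ extracted from the proof of the preceding Proposition to control the right-hand side in $V^*$, this yields $\|\partial_B\tilde{u}_B^{\{T\}}\|_V\le C$, which in particular gives the desired $L^2$-bound.

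The main obstacle is rigorously justifying that $B\mapsto\tilde{u}_B^{\{T\}}$ is actually differentiable in $L^2$, since the operator domain varies with $B$ through the Robin condition $\tilde{u}'(0)=(-1-B/2)\tilde{u}(0)$. The clean way around this is to invoke Kato's analytic perturbation theory for the family of forms $(\tilde{q}_B^{\{T\}})$ on the $B$-independent form domain $V$: analyticity in $B$ of the form, combined with the uniform spectral gap established above, ensures that the simple eigenvalue $\lambda_1(B)$ and its associated spectral projection depend holomorphically on $B$, with derivatives bounded uniformly in $T$.
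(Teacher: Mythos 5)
The paper itself offers no proof of this lemma: it is stated at the end of the appendix as a result recalled from the cited references, so there is no argument of the author's to compare yours against. On its own terms your proposal is sound. The first bound is indeed immediate from Proposition \ref{P1} together with $|B|\leq 1/(3T_0)$. For the second bound, the strategy --- work with the form $\tilde{q}_{B}^{\lbrace T\rbrace}$ on the $B$-independent form domain $V=\{u\in H^1(0,T):u(T)=0\}$, differentiate the weak eigenvalue equation, and invert $\tilde{\mathcal H}_{B}^{\lbrace T\rbrace}-\lambda_1$ on $\{\tilde u_B^{\lbrace T\rbrace}\}^{\perp}$ using a $T$- and $B$-uniform spectral gap --- is the standard and correct way to get a derivative bound that does not degrade as $T\to\infty$, and your computation of $\partial_B\tilde q_B^{\lbrace T\rbrace}$ (the $\frac{B}{2(1-B\tau)^3}$ kernel plus the $\frac12 u(0)\bar v(0)$ boundary term) is right. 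Three small points to tighten. First, the form is analytic in $B$, not polynomial (the potential $B^2/(4(1-B\tau)^2)$ is a genuine power series in $B$); this is only a slip of terminology since your derivative formula is the correct one. Second, the comparison with the $B=0$ Robin--Dirichlet operator gives $\lambda_2(\tilde{\mathcal H}_B^{\lbrace T\rbrace})\geq -C|B|$ rather than $-CB^2$, because the boundary term $-\tfrac{B}{2}|\psi(0)|^2$ is first order in $B$; the uniform gap of order $1/2$ survives, so nothing breaks. Third, to conclude $\|\partial_B\tilde u_B^{\lbrace T\rbrace}\|_{V}\leq C$ from the variational equation you need coercivity of $\tilde q_B^{\lbrace T\rbrace}-\lambda_1$ on $\{\tilde u_B^{\lbrace T\rbrace}\}^{\perp}$ in the $H^1$-norm, not merely positivity in $L^2$; this follows by combining the gap inequality with $\tilde q_B^{\lbrace T\rbrace}(w)\geq \tfrac12\|w'\|^2-C\|w\|^2$ (absorb the trace term via \eqref{Q}), and is worth one explicit line. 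With those adjustments, and Kato's type (B) analytic family argument to justify differentiability on the fixed form domain, the proof is complete.
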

\end{appendix}
\section*{Acknowledgement}
I would like to express my deep gratitude to my supervisors, Nicolas Raymond and Ayman Kachmar,  for their patient guidance,  enthusiastic encouragement and useful criticism.  I would also like to thank Bernard Helffer for his attentive reading and  advice.

\end{document}